\def \To{\longrightarrow}
\def \dim{\operatorname{dim}}
\def \mod{\operatorname{mod}}
\def \o{\otimes}
\def \b{\Delta}
\def \C{\mathcal{C}}
\def \D{\Delta}
\def \d{\delta}
\def \e{\varepsilon}
\def \S{\mathcal{S}}
\def \Z{\mathbbm{Z}}
\def \k{\mathbbm{k}}
\def \1{\mathbf{1}}
\def \id{\operatorname{id}}
\numberwithin{equation}{section}
\newtheorem{theorem}{Theorem}[section]
\newtheorem{lemma}[theorem]{Lemma}
\newtheorem{proposition}[theorem]{Proposition}
\newtheorem{corollary}[theorem]{Corollary}
\newtheorem{definition}[theorem]{Definition}
\newtheorem{remark}[theorem]{Remark}
\begin{document}

\title[Quasi-Quantum Linear Spaces]{Quasi-Quantum Linear Spaces$^\dag$}\thanks{$^\dag$Supported by SRFDP 20130131110001, SDNSF ZR2013AM022 and NSFC 11571199.}

\subjclass[2010]{Primary 16T05; Secondary 16T20, 81R60}

\keywords{nonassociative geometry, quasi-quantum groups, pointed Majid algebras}

\author[H.-L. Huang]{Hua-Lin Huang}
\address{School of Mathematics, Shandong University, Jinan 250100, China} \email{hualin@sdu.edu.cn}

\author[Y. Yang]{Yuping Yang*}\thanks{*Corresponding author.}
\address{School of Mathematics, Shandong University, Jinan 250100, China} \email{yupingyang.sdu@gmail.com}

\date{}
\maketitle

\begin{abstract}
We provide a classification of finite-dimensional graded pointed Majid algebras generated by finite abelian groups as group-like elements and a set of
quasi-commutative skew-primitive elements. This amounts to a classification of finite quasi-quantum linear spaces in the sense of nonassociative geometry.
\end{abstract}

\section{Introduction}
As a further extension of noncommutative geometry, the theory of nonassociative geometry has aroused a lot of interest. The idea is to think of the nonassociative
algebra geometrically as the coordinate algebra of a nonassociative space. Among which, the class of \emph{quasi-associative algebras} and the corresponding
\emph{quasi-quantum geometry} is better understood. The crux of this quasi-associative setting is that coordinate algebras are nonassociative but in a controlled way by means of a multiplicative associator. This influential philosophy initiated from Drinfeld's seminal work \cite{d} of quasi-Hopf algebras, or
quasi-quantum groups, and has been developed into a broader framework \cite{am, m2, bm3} and applied to quite a few subjects such as quantum field theory
\cite{m1, ms1}, noncommutative differential calculus \cite{ms2, bm1, bm2}, and string theory \cite{bhm}. It is also important to note that this novel idea allows us to treat quasi-associative algebras as if they were effectively associative with a help of the theory of tensor categories. As a marvelous example, it is worth to point out that Albuquerque and Majid discovered in \cite{am1} that the famous octonions are in fact associative and commutative in some suitable braided linear Gr-category.

The goal of the present paper is to contribute more concrete quasi-quantum geometric objects. We provide a classification of finite quasi-quantum linear spaces
which are natural extension of the quantum linear spaces studied in \cite{manin, m4, as1}. Throughout, we comply with the terminology of the review paper \cite{m4} of Majid. Recall that, by quantum linear spaces we mean pointed Hopf algebras that arise from the bosonization of braided linear spaces \cite{m1.5} together with group algebras. Another motivation of the paper is the classification problem of finite pointed tensor categories and the corresponding finite quasi-quantum algebras which has been under intensive research in recent years, see for instance
\cite{a, eg, qha1, qha2, qha3}. As a matter of fact, we obtain a class of finite-dimensional pointed coquasi-Hopf algebras, also called Majid algebras,
which may be understood as the coordinate algebras of finite quasi-quantum linear spaces. Consequently, this also provides a class of finite pointed tensor categories presented as the comodule categories of the obtained Majid algebras.

Now we introduce a bit more about the main result of the paper. Recall that a Majid algebra is a coalgebra endowed with a quasi-associative algebra structure
 and a quasi-antipode in a compatible way, for more details see \cite{m0, qha1, qha2, qha3}. A Majid algebra is said to be pointed, if its underlying coalgebra is
  so, that is, the simple comodules are one-dimensional. We call a Majid algebra graded, if it is coradically graded as a coalgebra, and its quasi-algebra
  structure and quasi-antipode maps respect the coradical grading. Throughout, we work over an algebraically closed field $\k$ with zero characteristic. Our
   purpose is to classify all finite-dimensional graded pointed Majid algebras generated by a finite abelian group $G$ as group-like elements and a set of
   skew-primitive elements $\{X_1, \cdots, X_n\}$ satisfying the quasi-commutative property, namely,
\begin{equation}
X_iX_j=q_{j,i}X_jX_i
\end{equation}
for some $q_{j,i}\in \k$ for all $1\leq i\neq j\leq n.$ These are the so-called finite quasi-quantum linear spaces in accordance with the idea of nonassociative
 geometry. The case with $n=2$ was considered in \cite{qqp} by direct computations. Those ideas seem not applicable to this general situation.

The method of classification in the present paper may be viewed as an extension of that used in \cite{as1} for the quantum linear spaces to the quasi-setting.
 Let $G$ be a finite abelian group. Firstly, we collect some necessary facts about the Yetter-Drinfeld category $^{\k G}_{\k G}\mathcal{Y}\mathcal{D}^\Phi$ of the group Majid algebra
 $(\k G, \Phi).$ This category is a generalization of that of Whitehead's $G$-crossed modules \cite{w} and was computed explicitly by Majid in \cite{m3} as an example of his dual or center of the comodule category of $(\k G, \Phi).$ Then we investigate commutative Nichols algebras, or equivalently braided linear spaces, inside $^{\k G}_{\k G}\mathcal{Y}\mathcal{D}^\Phi.$ Finally we determine all finite
 quasi-quantum linear spaces via a quasi-version of Majid's bosonization \cite{m}, or Radford's biproduct \cite{rad}. It is worthy to remark that this method
  may be applied to pursue more general finite-dimensional pointed Majid algebras. The key lies in determining finite Nichols algebras inside those
  Yetter-Drinfeld categories of form $^{\k G}_{\k G}\mathcal{Y}\mathcal{D}^\Phi.$ It is expected that the successful ideas in \cite{as2, as3} may provide
   a useful model to tackle this problem. We will turn to this subject matter in later works.

Here is the layout of the paper. In Section 2, we recall some preliminaries. In Section 3, we provide the general foundation for the study of graded pointed
 Majid algebras via the 3-step method mentioned above. Finally in Sections 4 and 5, we give a classification of finite quasi-quantum linear spaces.

\section{Preliminaries}
In this section, we recall some definitions, notations and basic facts about braided Hopf algebras, the Yetter-Drinfeld categories of group Majid algebras and
 Nichols algebras within them.

\subsection{Braided Hopf algebras}
Let $(\mathcal{C},\otimes, \1, a, l, r, c),$ denoted briefly by $\C$ in the following, be a braided tensor category, where $\1$ is the unit object,
 $a$ ($l,$ or $r$) is the associativity (left, or right unit) constraint and $c$ is the braiding. An (associative) algebra in $\C$ is an object $A$ of
 $\mathcal{C}$ endowed with a multiplication morphism $m: A\otimes A\To A$ and a unit morphism $u: \1 \To A$ such that
 $m\circ(m\otimes \id)=m\circ(\id\otimes m)\circ a_{A,A,A}$  and $m\circ(\id \otimes u)=r_A, \ m\circ(u \otimes \id)=l_A.$ Dually, a (coassociative) coalgebra
 in $\C$ is an object $C$ of $\C$ endowed with a comultiplication  morphism $\Delta: C\To C\otimes C$ and a counit morphism $\varepsilon: C \To \1$ such that
  $a_{C,C,C}\circ(\Delta\otimes \id) \circ \Delta=(\id\otimes \Delta)\circ \Delta$ and
  $r_C^{-1}= (\id \otimes \varepsilon)\circ\Delta, \ l_C^{-1}=(\varepsilon\otimes \id)\circ\Delta.$

If $(A,m_A,u_A)$ and $(B,m_B,u_B)$ are two algebras in $\mathcal{C},$ then one can define a morphism
$m_{A\otimes B}: (A\otimes B)\otimes (A\otimes B)\To A\otimes B$ by
\begin{equation}
m_{A\otimes B}=(m_A\otimes m_B)\circ a_{A\otimes A,B,B}\circ(a_{A,A,B}^{-1}\otimes \id)
\circ(\id\otimes c_{B,A}\otimes \id)\circ(a_{A,B,A}\otimes \id)\circ a_{A\otimes B,A,B}^{-1}.
\end{equation}
Clearly this construction is a natural generalization of the usual tensor product of algebras and was developed in general braided monoidal categories by Majid in his theory of braided groups \cite{m1.5}.

\begin{proposition}
Suppose that $(A,m_A,u_A)$ and $(B,m_B,u_B)$ are algebras in the braided tensor category $\mathcal{C},$ then $(A \otimes B, m_{A\otimes B}, u_A\otimes u_B)$ is an algebra in $\mathcal{C}.$
\end{proposition}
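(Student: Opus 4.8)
The plan is to verify the two algebra axioms for $(A\otimes B, m_{A\otimes B}, u_A\otimes u_B)$ — associativity of $m_{A\otimes B}$ and its compatibility with the unit $u_A\otimes u_B$ — after first clearing away the coherence isomorphisms. By Mac Lane's coherence theorem, together with its braided refinement, one may replace $\C$ by an equivalent strict braided tensor category; since the assertion is an equality of morphisms assembled from $m_A$, $m_B$, $u_A$, $u_B$ and the structural data of $\C$, it holds in $\C$ if and only if it holds in the strictification. In the strict setting the six constraints in the defining formula disappear and $m_{A\otimes B}$ collapses to the familiar expression $(m_A\otimes m_B)\circ(\id_A\otimes c_{B,A}\otimes\id_B)\colon (A\otimes B)\otimes(A\otimes B)\To A\otimes B$, while $u_{A\otimes B}=u_A\otimes u_B$. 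What remains is then the classical verification for the braided tensor product of algebras, going back to Majid's theory of braided groups \cite{m1.5}; I would include it to keep the paper self-contained.

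For associativity I would expand both $m_{A\otimes B}\circ(m_{A\otimes B}\otimes\id_{A\otimes B})$ and $m_{A\otimes B}\circ(\id_{A\otimes B}\otimes m_{A\otimes B})$ as morphisms $(A\otimes B)^{\otimes 3}\To A\otimes B$. Each is a composite of two copies of $m_A$, two copies of $m_B$, and a braiding pattern threading the $A$'s past the $B$'s; the two sides differ only in the order in which the multiplications and the braidings are performed. To reconcile them I would push $m_A$ and $m_B$ past the braiding by naturality of $c$, and split or merge braidings over tensor products using the two hexagon identities $c_{X,Y\otimes Z}=(\id_Y\otimes c_{X,Z})\circ(c_{X,Y}\otimes\id_Z)$ and $c_{X\otimes Y,Z}=(c_{X,Z}\otimes\id_Y)\circ(\id_X\otimes c_{Y,Z})$. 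After these moves both composites take the form $\bigl(m_A\circ(m_A\otimes\id_A)\bigr)\otimes\bigl(m_B\circ(m_B\otimes\id_B)\bigr)$ precomposed with one and the same rearrangement of the tensorands, and the associativity of $m_A$ and of $m_B$ closes the gap.

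For the unit I would use the standard fact that in a strict braided tensor category $c_{X,\1}=\id_X=c_{\1,X}$, itself a consequence of the hexagons. Then $m_{A\otimes B}\circ\bigl(\id_{A\otimes B}\otimes(u_A\otimes u_B)\bigr)$ reduces to $\bigl(m_A\circ(\id_A\otimes u_A)\bigr)\otimes\bigl(m_B\circ(\id_B\otimes u_B)\bigr)=\id_A\otimes\id_B$ by the right unit axioms for $A$ and $B$, and the left unit case is symmetric; transporting these equalities back through the strictification identifies them with $r_{A\otimes B}$ and $l_{A\otimes B}$.

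The one genuine difficulty here is bookkeeping rather than anything conceptual: in the non-strict category the displayed formula for $m_{A\otimes B}$ carries six constraint isomorphisms, and a head-on diagram chase for associativity would have to juggle the pentagon, the triangle and both hexagons simultaneously. The point of committing to the coherence reduction at the very start is precisely to isolate the hexagons — which carry the real content of the argument — from this ambient clutter.
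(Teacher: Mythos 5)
Your argument is correct, and it is essentially the same proof the paper points to: the paper simply cites Majid's braid-diagram verification in \cite{m4}, and a braid-diagram proof is precisely your argument in graphical form --- coherence justifies working strictly, after which associativity follows from naturality of the braiding together with the hexagon identities, and the unit axiom from $c_{X,\1}=\id_X=c_{\1,X}$. Writing it out as you propose would make the paper more self-contained but adds nothing beyond the cited proof.
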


The reader is referred to \cite[Lemma 2.1]{m4} for a proof by means of braid diagrams. The resulting algebra is called the braided tensor product of $A$ and $B.$
Dually, if $(C, \Delta_C, \varepsilon_C)$ and $(D, \Delta_D, \varepsilon_D)$ are coalgebras in $\C,$ then one can define a suitable morphism
$\Delta_{C \otimes D}: C \otimes D \To (C \otimes D) \otimes (C \otimes D)$ such that $(C \otimes D, \Delta_{C \otimes D}, \varepsilon_C \otimes \varepsilon_D)$
 is again a coalgebra in $\C.$

Armed with his construction of braided tensor product, Majid introduced and studied systematically the theory of braided groups, or braided Hopf algebras \cite{m1.5, m4}. For completeness, we record the definition in the following.

\begin{definition}
We say that a sextuplet $(H,m,u,\Delta,\varepsilon,S)$ is a Hopf algebra in the braided tensor category $\mathcal{C},$ or simply a braided Hopf algebra, if $(H,m,u)$ is an algebra in
$\mathcal{C},$  $(H,\Delta,\varepsilon)$ is a coalgebra in $\mathcal{C}$ and $\Delta:H \To H\otimes H$ and $\varepsilon: H \To \1$ are algebra maps
in $\C,$ and $S: H \To H$ is a morphism, to be called the antipode, subject to
\begin{equation}
m \circ (S \otimes \id) \circ \Delta = u \circ \varepsilon = m \circ (\id \otimes S) \circ \Delta.
\end{equation}
\end{definition}

As the usual case, one may naturally define ideals of algebras, coideals of coalgeras, Hopf ideals of Hopf algebras, and the corresponding quotient structures in braided tensor categories. The details are omitted.

\subsection{The Yetter-Drinfeld category of $(\k G,\Phi)$}
Recall that the Yetter-Drinfeld category $^H_H\mathcal{YD}$ of a quasi-Hopf algebra $H$ may be defined as the center $Z(H$-$\mod)$ of its module category
$H$-$\mod$ and it is known to be braided tensor equivalent to the module category of the quantum double $D(H)$ of $H,$ see \cite{majid, m3} for more details.
 We may define the Yetter-Drinfeld category of a finite-dimensional Majid algebra by the canonical duality procedure, namely, the Yetter-Drinfeld category
 of a given finite-dimensional Majid algebra $M$ is defined to be $^{M^*}_{M^*}\mathcal{Y}\mathcal{D}$ where $M^*$ is the quasi-Hopf algebra dual to $M.$

In this paper we mainly concern with the Yetter-Drinfeld category of the group Majid algebra $(\k G,\Phi)$ of a finite abelian group $G$ and a normalized
3-cocycle $\Phi$ on $G.$ To emphasize $\Phi,$ we denote such a category as $_{\k G}^{\k G}\mathcal{Y}\mathcal{D}^{\Phi}.$ Now we recall some more details of
 such categories as given previously in \cite{dpr, majid, m3}. By abuse of notation, we denote the dual of $(\k G,\Phi)$ by $(\k (G), \Phi)$ where the latter
 $\Phi$ is the associator which is obtained by linear extension of the 3-cocycle $\Phi.$  Let $\{\delta_g\}_{g\in G}$ be a basis of $\k(G)$ where $\delta_g$
  is the Dirac function at the point $g.$ By $D^\Phi(G)$ we denote the quantum double of $(\k(G),\Phi).$ Then $D^\Phi(G)$ is a quasitriangular quasi-Hopf algebra
   with product and coproduct determined by
\begin{eqnarray}
\delta_gx\cdot\delta_hy &=& \delta_{g,h}\frac{\Phi(g,x,y)\Phi(x,y,g)}{\Phi(x,g,y)}\delta_gxy, \\
\Delta(\delta_gx)&=&\sum_{hk=g}\frac{\Phi(h,k,x)\Phi(x,h,k)}{\Phi(h,x,k)}\delta_hx\otimes \delta_kx,
\end{eqnarray}
and with associator $\varphi$ and universal $\mathcal{R}$-matrix given by
\begin{eqnarray}
\varphi&=&\sum_{g,h,k\in G}\Phi(g,h,k)^{-1}\delta_g1\otimes \delta_h1\otimes \delta_k1,\\
\mathcal{R}&=&\sum_{g\in G}\delta_g1\otimes \delta_1g.
\end{eqnarray}
Clearly $\sum_{g\in G}\d_g1$ is the identity of $D^\Phi (G)$ by (2.3), so for any $D^\Phi(G)$-module $V$ we have $$V=\oplus_{g\in G}\delta_g1V=\oplus_{g\in G}V_g,$$
 where $\delta_g1V$ is written as $V_g$ for brevity. For any $v \in V_g,$ note that
 \[ \delta_gx\cdot (\delta_gy\cdot v)=\frac{\Phi(g,x,y)\Phi(x,y,g)}{\Phi(x,g,y)}(\delta_gxy)\cdot v \quad \mathrm{and} \quad \delta_hx\cdot v=0 \ \mathrm{if} \ h\neq g. \]
Let $\widetilde{\Phi}_g(x,y)=\frac{\Phi(g,x,y)\Phi(x,y,g)}{\Phi(x,g,y)}$ and by direct computation one can show that $\widetilde{\Phi}_g$ is a 2-cocycle on $G.$ That is to
 say, $V_g$ is a projective $G$-representation with respect to the 2-cocycle $\widetilde{\Phi}_g,$ also called a $(G,\widetilde{\Phi}_g)$-representation,
 see \cite{k, qha2}. We remark that in \cite{qha2} the same symbol $\widetilde{\Phi}_g$ was used to denote a different 2-cocycle, however the final projective
 representation as defined later in Lemma 3.1 is esentially identical to that in \cite{qha2}. Now we summarize some useful properties of $_{\k G}^{\k G}\mathcal{Y}\mathcal{D}^{\Phi}$ in the following proposition, see \cite{m3} for more details.

\begin{proposition}
A vector space $V$ is an object in $_{\k G}^{\k G}\mathcal{Y}\mathcal{D}^{\Phi}$ if and only if $V=\oplus_{g\in G}V_g$ with each $V_g$ a projective
$G$-representation with respect to the 2-cocycle $\widetilde{\Phi}_g,$ namely
\begin{equation}
e\triangleright(f\triangleright v)=\widetilde{\Phi}_g(e,f) (ef)\triangleright v.
\end{equation}
The associativity and the braiding constraints of $_{\k G}^{\k G}\mathcal{Y}\mathcal{D}^{\Phi}$ are given respectively by
\begin{eqnarray}
&\ a_{V_e,V_f,V_g}((X\otimes Y)\otimes Z) =\Phi(e,f,g)^{-1} X\otimes (Y\otimes Z )\\
&R(X\otimes Y)=e \triangleright Y\otimes X
\end{eqnarray}
for all $X\in V_e,\  Y\in V_f,\  Z \in V_g.$
\end{proposition}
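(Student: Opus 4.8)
The plan is to reduce everything to the representation category of the quantum double $D^\Phi(G)$. By the definitions recalled above, $_{\k G}^{\k G}\mathcal{YD}^\Phi={}^{\k(G)}_{\k(G)}\mathcal{YD}=Z(\k(G)\text{-mod})$ is braided tensor equivalent to the category of $D^\Phi(G)$-modules, so it suffices to describe the objects of the latter together with its associativity and braiding constraints, and for this everything can be read off the explicit structure maps (2.3)--(2.6).

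For the object description I would argue as follows. By (2.3) the elements $\{\delta_g1\}_{g\in G}$ form a complete family of orthogonal idempotents of $D^\Phi(G)$ with sum $1$, so every module $V$ splits canonically as $V=\oplus_{g\in G}V_g$ with $V_g=\delta_g1\cdot V$. Restricting (2.3) to the block $\delta_gD^\Phi(G)=\operatorname{span}\{\delta_gx\mid x\in G\}$ yields $\delta_gx\cdot\delta_gy=\widetilde{\Phi}_g(x,y)\,\delta_g(xy)$, so this block is exactly the twisted group algebra $\k_{\widetilde{\Phi}_g}G$; hence setting $x\triangleright v:=\delta_gx\cdot v$ for $v\in V_g$ makes $V_g$ a projective $G$-representation with $2$-cocycle $\widetilde{\Phi}_g$, which is precisely (2.7). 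Conversely, from a family $\bigl(V=\oplus_gV_g,\ \triangleright\bigr)$ of this shape one reconstructs an action of $D^\Phi(G)$ by letting $\delta_hx$ act on $V_g$ as $\delta_{h,g}\,x\triangleright(-)$, and a short check against (2.3) shows this is indeed a module structure. Thus the objects are exactly those in the statement.

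The constraints are then obtained by evaluating the structure elements on homogeneous components. For associativity, by (2.5) the associator $\varphi$ acts on $X\otimes Y\otimes Z$ with $X\in V_e$, $Y\in V_f$, $Z\in V_g$ simply as the scalar $\Phi(e,f,g)^{-1}$, because $\delta_e1\otimes\delta_f1\otimes\delta_g1$ is the projection onto that triple-homogeneous piece; this gives (2.8). For the braiding, recall that for a quasitriangular quasi-Hopf algebra the braiding $R$ on its module category sends $v\otimes w$ to $\tau\bigl(\mathcal{R}\cdot(v\otimes w)\bigr)$; plugging in $\mathcal{R}$ from (2.6) and using that $\delta_g1$ acts by projection onto $V_g$ while the group-algebra leg of $\mathcal{R}$ implements the $G$-action, one computes $\mathcal{R}\cdot(X\otimes Y)=X\otimes(e\triangleright Y)$ for $X\in V_e$, whence $R(X\otimes Y)=\tau\bigl(X\otimes(e\triangleright Y)\bigr)=e\triangleright Y\otimes X$, which is (2.9). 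The remaining points — that the $G$-action on each module preserves the grading, so these formulas are well defined on homogeneous pieces and extend bilinearly, and that (2.8) and (2.9) verify pentagon and hexagon — are automatic, being transcriptions of axioms already satisfied by the quasitriangular quasi-Hopf algebra $D^\Phi(G)$.

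The one point that genuinely needs work is the fact, used throughout, that $\widetilde{\Phi}_g(x,y)=\Phi(g,x,y)\Phi(x,y,g)\Phi(x,g,y)^{-1}$ is a normalized $2$-cocycle on $G$. I would prove this by applying the normalized $3$-cocycle (pentagon) identity for $\Phi$ to the four quadruples $(g,x,y,z)$, $(x,g,y,z)$, $(x,y,g,z)$, $(x,y,z,g)$ and multiplying the resulting relations together; using commutativity of $G$, every factor not of the form $\Phi(g,-,-)$, $\Phi(-,g,-)$ or $\Phi(-,-,g)$ cancels, and what remains is exactly the $2$-cocycle identity for $\widetilde{\Phi}_g$. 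This computation, together with keeping the conventions for $D^\Phi(G)$ and its $\mathcal{R}$-matrix straight in the derivation of (2.9), is where essentially all the care in the proof lies; conceptually the statement merely says that the center of $\k(G)\text{-mod}$ is $D^\Phi(G)\text{-mod}$, and one then decomposes along the central idempotents $\delta_g1$.
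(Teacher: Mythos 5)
Your proposal is correct and follows essentially the same route the paper takes: the paper gives no separate proof (it cites Majid's computation of the center of the comodule category), but the discussion surrounding the proposition --- the idempotent decomposition $V=\oplus_g \delta_g 1 V$, the identification of each block with a $\widetilde{\Phi}_g$-projective representation, and reading the constraints off $\varphi$ and $\mathcal{R}$ --- is exactly what you have fleshed out. Your closing remark on verifying the $2$-cocycle identity for $\widetilde{\Phi}_g$ from four instances of the $3$-cocycle condition is the standard ``direct computation'' the paper alludes to.
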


\subsection{Nichols algebras in $^{\k G}_{\k G}\mathcal{Y}\mathcal{D}^\Phi$}
Nichols algebras are, roughly speaking, the analogue of the familiar symmetric algebras in more general braided tensor categories. In the classification problem of
 finite-dimensional pointed Hopf algebras, Nichols algebras in $^{\k G}_{\k G}\mathcal{Y}\mathcal{D}^\Phi$ with trivial $\Phi$ play a key role, see for instance
 \cite{as1, as2}. Naturally, in order to tackle the classification problem of finite-dimensional pointed Majid algebras we need to study first Nichols algebras
  in more general Yetter-Drinfeld categories of form $^{\k G}_{\k G}\mathcal{Y}\mathcal{D}^\Phi$ with nontrivial $\Phi.$ An obvious difficulty we have to confront,
   in this situation, is that though associative in the category $^{\k G}_{\k G}\mathcal{Y}\mathcal{D}^\Phi$ these Nichols algebras are generally
    \emph{nonassociative} in the usual sense. To overcome this, we shall take Majid's braided Hopf algebra approach \cite{m1.5, m4} which starts with the tensor algebra of any object in $^{\k G}_{\k G}\mathcal{Y}\mathcal{D}^\Phi$ and by an appropriate quotient to obtain the desired Nichols algebra.

Let $V$ be a nonzero object in $^{\k G}_{\k G}\mathcal{Y}\mathcal{D}^\Phi.$ By $T(V)$ we denote the tensor algebra in $^{\k G}_{\k G}\mathcal{Y}\mathcal{D}^\Phi$
generated by $V,$ that is, the algebra freely generated by $V$ subject to the associative condition
\[ u\otimes (v\otimes w)- a_{V,V,V}\big( (u\otimes v)\otimes w\big), \quad \forall u,v,w\in V.\]
It is clear that $T(V)$ is isomorphic to $\bigoplus_{n \geq 0}V^{\otimes \overrightarrow{n}},$ where $V^{\otimes \overrightarrow{n}}$ means
$\underbrace{(\cdots((}_{n-1}V\otimes V)\otimes V)\cdots \otimes V).$ This induces naturally an $\mathbb{N}$-graded structure on $T(V).$ Define
a comultiplication on $T(V)$ by $\Delta(X)=X\otimes 1+1\otimes X, \ \forall X \in V,$ counit by $\varepsilon(X)=0,$ and antipode by $S(X)=-X.$ It
 is not hard to show that these provide a graded braided Hopf algebra structure on $T(V)$ within the braided tensor category
  $^{\k G}_{\k G}\mathcal{Y}\mathcal{D}^\Phi.$

\begin{definition}
The Nichols algebra $\mathcal{B}(V)$ of $V$ in $^{\k G}_{\k G}\mathcal{Y}\mathcal{D}^\Phi$ is defined to be the quotient braided Hopf algebra $T(V)/I$
where $I$ is the unique maximal graded Hopf ideal generated by homogeneous elements of degree greater than or equal to 2.
\end{definition}

Nichols algebras in the Yetter-Drinfeld category of a Hopf algebra can be defined by various equivalent ways, see \cite{as2}. Here we adopt the method of
definition using the universal property of Nichols algebras. Our definition works for the Yetter-Drinfeld category of any finite-dimensional (co-)quasi-Hopf
 algebras. Needless to say, this definition of Nichols algebras in  $^{\k G}_{\k G}\mathcal{Y}\mathcal{D}^\Phi$ reduces to that in
 $^{\k G}_{\k G}\mathcal{Y}\mathcal{D}$ if $\Phi$ is trivial.

\subsection{Braided linear spaces}
For our purpose, a braided linear space in $^{\k G}_{\k G}\mathcal{Y}\mathcal{D}^\Phi$ will be defined as an $\mathbb{N}$-graded braided Hopf algebra $\mathcal{S}$
within $^{\k G}_{\k G}\mathcal{Y}\mathcal{D}^\Phi$ generated by a set $\{X_i\}_{1\leq i\leq n}$ of primitive elements subject to relations
\begin{gather}
X_i^{\overrightarrow{N_i}}=0\  \mathrm{for \ some \ positive \ integer} \ N_i, \ 1 \leq i \leq n, \\
X_iX_j=q_{j,i}X_jX_i \ \mathrm{for \ all}\ i \neq j,
\end{gather} where $X^{\overrightarrow{N}}$ means $\underbrace{(\cdots((}_{N-1}XX)X)\cdots X).$ Let $V=\mathcal{S}(1)=\oplus_{1\leq i\leq n}\k X_i$ and $V$ is
 an object in $^{\k G}_{\k G}\mathcal{Y}\mathcal{D}^\Phi$ whose actions and coactions are the restriction of those of $\mathcal{S}.$ Later in Theorem 3.6,
 we will show that $\S$ is in fact the Nichols algebra $\mathcal{B}(V)$ of $V$ in $^{\k G}_{\k G}\mathcal{Y}\mathcal{D}^\Phi.$ Note also that $\mathcal{S}$ is
  commutative in the braided tensor category $^{\k G}_{\k G}\mathcal{Y}\mathcal{D}^\Phi.$ By the philosophy of nonassociative geometry, $\mathcal{S}$ may be
  viewed as the ``coordinate algebra" of the linear space $V$ in $_{\k G}^{\k G}\mathcal{Y}\mathcal{D}^{\Phi}.$ In the following we write $\mathcal{S}(V)$ instead
   of $\mathcal{S}$ to emphasize the deep relation between $\mathcal{S}$ and $V.$ We also say $\mathcal{S}(V)$ is of rank $n$ if $\dim V=n.$

\section{Nichols algebras and quasi-quantum groups}
Throughout this section, $M$ is assumed to be a finite-dimensional graded pointed Majid algebra generated by an abelian group $G$ and a set of
skew-primitive elements unless otherwise stated. Hence we may write $M=\oplus_{n\geq 0}M(n)$ such that $M_n:=\oplus_{0\leq i\leq n}M(i)$ is the $n$-th term of
 its coradical filtration. Clearly, the coradical $M_0$ is exactly $\k G,$ and $M_0$ admits a Majid subalgebra structure inherited from $M$ with the associator
  determined by a normalized 3-cocycle $\Phi$ on $G,$ and with an antipode $(S,\alpha,\beta)$ given by
   $S(g)=g^{-1},$ $\alpha(g)=1$ and $\beta(g)=\frac{1}{\Phi(g,g^{-1},g)}$ for all $g\in G.$ On the other hand, the associator $\Phi$ and the antipode
   $(S,\alpha,\beta)$ of $M_0$ can be extended to those for $M,$ see \cite{qha1}. In particular,
 \begin{eqnarray}
 &\Phi(x,y,z)=0,\ \ \ \alpha(x)=\beta(x)=0, \notag \\
 &S(a_1)\alpha(a_2)a_3=\alpha(a),\ \ \ a_1\beta(a_2)S(a_3)=\beta(a),\\
&\Phi\big(a_1,S(a_3),a_5\big)\beta(a_2)\alpha(a_4)=\Phi^{-1}\big(S(a_1),a_3,S(a_5)\big)\alpha(a_2)\beta(a_4)=\varepsilon(a) \notag
\end{eqnarray}
for all $x,y,z\in \oplus_{i\geq 1}M(i)$ and $a\in M.$ Here and below we use Sweedler's sigma notation for the $i$-th iterated comultiplication
$$\D^i(a)=a_1\otimes a_2\otimes \cdots \otimes a_{i+1}.$$

Let $\pi: M \rightarrow M_0$ be the canonical projection. The associated coinvariant subalgebra of $M$ is defined by
\begin{equation}
R:=M^{\operatorname{coinv} M_0}=\{x\in M| (\id\otimes \pi)\Delta(x)=x\otimes 1\}.
\end{equation}
One can easily show that $R$ is closed under the multiplication of $M.$ The main task of this section is to prove that endowed with an appropriate coalgebra
structure $R$ is in fact a braided Hopf algebra, more precisely a Nichols algebra, in the Yetter-Drinfeld category $_{M_0}^{M_0}\mathcal{Y}\mathcal{D}^{\Phi},$ and to establish a quasi-version of Majid's bosonization. We remark that Majid's bosonization was explored in a very broad context in \cite{b} by using braided diagrams without considering nontrivial associators explicitly.  Theoretically, one can always give these form of cross product abstractly without considering associator because of Coherence Theorem and Strictness Theorem. But for our purpose, we need to present our Majid algebras by generators and relations, so these braided diagrams are not very useful for us. Hence the exploration of explicit cross product formulae with nontrivial associators are necessary.

\subsection{$R$ is a braided Hopf algebra in $_{\k G}^{\k G}\mathcal{Y}\mathcal{D}^{\Phi}$}
\begin{lemma}
$R$ is an object in $_{\k G}^{\k G}\mathcal{Y}\mathcal{D}^{\Phi}$.
\end{lemma}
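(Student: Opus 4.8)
The plan is to exhibit on $R$ both a $\k G$-module (i.e. $G$-action) structure and a $\k G$-comodule (i.e. $G$-grading) structure, and then to verify the compatibility condition (2.8) of Proposition 2.5 with respect to the $2$-cocycles $\widetilde{\Phi}_g$. For the coaction, the canonical projection $\pi\colon M\To M_0=\k G$ turns $M$ into a $\k G$-comodule via $(\id\otimes\pi)\circ\Delta$; since $R=M^{\operatorname{coinv}M_0}$ is defined precisely as the space of elements $x$ with $(\id\otimes\pi)\Delta(x)=x\otimes 1$, one checks that $R$ is a subcomodule under the other side, $(\pi\otimes\id)\circ\Delta$, because $M_0=\k G$ is cocommutative and $\Phi$ is normalized on $G$; this gives the decomposition $R=\oplus_{g\in G}R_g$ where $R_g=\{x\in R\mid (\pi\otimes\id)\Delta(x)=g\otimes x\}$. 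For the action, I would use conjugation twisted by the antipode: for $g\in G$ and $x\in R$ set $g\triangleright x := \Phi\text{-corrected}$ version of $g\, x\, g^{-1}$, the correction factors coming from reassociating the triple product $g(xg^{-1})$ inside the nonassociative algebra $M$ using the associator $\Phi$ (and the antipode data $S(g)=g^{-1}$, $\beta(g)=\Phi(g,g^{-1},g)^{-1}$). One must first check this action lands back in $R$, which follows from the coinvariance definition together with the fact that $g$ and $g^{-1}$ project to group-like elements under $\pi$.

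Next I would verify that the action is projective with respect to $\widetilde{\Phi}_g$ on each homogeneous piece $R_g$, i.e. that $e\triangleright(f\triangleright x)=\widetilde{\Phi}_g(e,f)\,(ef)\triangleright x$ for $x\in R_g$. This is the computational heart of the lemma: unravelling $e\triangleright(f\triangleright x)$ means reassociating a product of the form $e\big(f(xf^{-1})f^{-1}\big)\cdots$ and collecting all the $\Phi$-values that appear; the claim is that they combine exactly into $\Phi(e,f,g)\Phi(e,f,g^{-1})^{\pm1}$-type expressions whose net effect is the single cocycle $\widetilde{\Phi}_g(e,f)=\frac{\Phi(g,e,f)\Phi(e,f,g)}{\Phi(e,g,f)}$. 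The $3$-cocycle identity for $\Phi$ on $G$, applied repeatedly to quadruples of group elements, is what forces these cancellations. Because $x$ has trivial ``right-hand'' $M_0$-coaction (that is the content of $x\in R$) but sits in degree $g$ for the ``left-hand'' coaction, the relevant group element that survives in every associator slot is $g$, which is why $\widetilde{\Phi}_g$ and not some other cocycle appears.

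Finally I would record that the action and coaction are compatible in the graded sense — the $G$-action preserves the $G$-grading in the appropriate twisted way, i.e. $e\triangleright R_g\subseteq R_g$ — which is immediate once both structures are shown to be restrictions of the corresponding structures on $M$ (the latter being an object of $_{M_0}^{M_0}\mathcal{Y}\mathcal{D}^{\Phi}$ by the general theory of the center of a comodule category, or can be checked directly). The main obstacle is the bookkeeping in the previous paragraph: keeping track of the precise $\Phi$-factors produced by each reassociation, and organizing the repeated use of the $3$-cocycle condition so that one genuinely lands on $\widetilde{\Phi}_g$ rather than a cohomologous twist. A clean way to manage this is to do the computation first for $x$ a single skew-primitive generator $X_i$ (where $\Delta(X_i)=X_i\otimes 1 + g_i\otimes X_i$ for some $g_i\in G$, so $X_i\in R_{g_i}$) and then observe that both sides are algebra-compatible, so the identity for generators propagates to all of $R$.
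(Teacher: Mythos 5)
Your proposal is correct and follows essentially the same route as the paper: the $G$-grading on $R$ is induced by the coaction $(\pi\otimes\id)\Delta$, the action is the $\Phi$-corrected conjugation $f\triangleright X=\frac{\Phi(fg,f^{-1},f)}{\Phi(f,f^{-1},f)}(f\cdot X)\cdot f^{-1}$ for $X\in R_g$, and the $(G,\widetilde{\Phi}_g)$-projectivity is extracted from repeated use of the $3$-cocycle identity. The only difference is organizational: instead of expanding $e\triangleright(f\triangleright X)$ head-on, the paper first establishes $f\cdot X=(f\triangleright X)\cdot f$ and then compares two reassociations of $(ef)\cdot X$, which keeps the inverses $e^{-1},f^{-1}$ out of the iterated computation and makes the bookkeeping you flag as the main obstacle considerably lighter.
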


\begin{proof}Notice that $M$ is a $kG$-bicomodule naturally through
$$\delta_{L}:=(\pi\otimes \id)\D,\;\;\;\;\delta_{R}:=(\id\otimes \pi)\D.$$
Thus there is a $G$-bigrading on $M$, that is,
$$M=\bigoplus_{g,h\in G}\;^{g}M^{h}$$
where $^{g}M^{h}=\{m\in M|\delta_{L}(m)=g\otimes m,\;\delta_{R}(m)=m\otimes h\}$. So by definition we have $R=\oplus_{g\in G}\ ^{1}M^{g}=\oplus_{g\in G}R_g,$
 where $R_g=\ ^{1}M^{g}.$ Hence $R$ admits a $G$-graded structure. According to Proposition 2.4, we only need to prove that $R_g$ is a
 $(G,\widetilde{\Phi}_g)$-representation under the $G$-action defined by
\begin{equation} f\triangleright X=\frac{\Phi(fg,f^{-1},f)}{\Phi(f,f^{-1},f)}(f\cdot X)\cdot f^{-1} \end{equation} for
all $f\in G,\ X\in R_g.$ Here $f\cdot X$ means the product in the Majid algebra $M.$

For our purpose, first note that
$$[(f\cdot X)\cdot f^{-1}]\cdot f=\frac{\Phi(f,f^{-1},f)}{\Phi(fg,f^{-1},f)}f\cdot X,$$ hence
\begin{equation}
f\cdot X=\bigg[\frac{\Phi(fg,f^{-1},f)}{\Phi(f,f^{-1},f)}(f\cdot X)\cdot f^{-1}\bigg]\cdot f=(f\triangleright X) \cdot f.
\end{equation}
Then for all $e,f \in G,$ we have
\begin{equation*}
\begin{split}
(ef)\cdot X=&\Phi(e,f,g)^{-1}e\cdot(f\cdot X)\\
=&\Phi(e,f,g)^{-1}e\cdot[(f\triangleright X)\cdot f]\\
=&\Phi(e,f,g)^{-1}\Phi(e,g,f)[e\cdot(f\triangleright X)]\cdot f\\
=&\Phi(e,f,g)^{-1}\Phi(e,g,f)\{[e\triangleright(f\triangleright X)]\cdot e\}\cdot f\\
=&\Phi(e,f,g)^{-1}\Phi(e,g,f)\Phi(g,e,f)^{-1}[e\triangleright(f\triangleright X)]\cdot (ef)\\
=&[(ef)\triangleright X]\cdot (ef),
\end{split}
\end{equation*}
where the last equality follows from (3.4). Finally by comparing the last two terms we observe
\begin{equation*}
e\triangleright(f\triangleright X)=\frac{\Phi(e,f,g)\Phi(g,e,f)}{\Phi(e,g,f)}(ef)\triangleright X=\widetilde{\Phi}_g(e,f)(ef)\triangleright X.
\end{equation*}
This says exactly that $R_g$ is a $(G,\widetilde{\Phi}_g)$-representation.
\end{proof}

In the following, we use the lowercase letters such as $x,$ $x_i$, $x^j$ to present respectively the degrees of the corresponding capital letters $X, X_i, X^j$ which are homogeneous elements in $R.$

\begin{proposition}
$R$ is a braided Hopf algebra in $_{\k G}^{\k G}\mathcal{Y}\mathcal{D}^{\Phi}$ in which
\begin{enumerate}
\item the multiplication $m_R$ is inherited from that of $M,$
\item the comultiplication is defined by $\D_R:\; R\to R\otimes R,\;\;\;\;X\mapsto \Phi(x_1,x_2,x_2^{-1})X_{1}\cdot x_{2}^{-1}\otimes X_{2},$
\item the counit is defined by $\e_R:\; R\to k,\;\;\;\;\e_R:=\e|_R,$ and
\item the antipode is defined by $S_R:\;R\to R,\;\;\;\;X\mapsto \frac{1}{\Phi(x,x^{-1},x)}x\cdot S(X).$
\end{enumerate}
\end{proposition}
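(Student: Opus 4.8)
The plan is to verify each of the four claimed structures in turn, then check their compatibility, working throughout inside the braided tensor category $_{\k G}^{\k G}\mathcal{Y}\mathcal{D}^{\Phi}$ whose associator and braiding are given explicitly by Proposition 2.4. By Lemma 3.1 we already know $R=\oplus_{g\in G}R_g$ is an object of this category; what remains is to see that the maps in (1)--(4) are morphisms in the category and satisfy the braided Hopf algebra axioms from Definition 2.2.

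First I would check that the multiplication $m_R$ inherited from $M$ lands in $R$ and is a morphism in $_{\k G}^{\k G}\mathcal{Y}\mathcal{D}^{\Phi}$, i.e. that it is compatible with the $G$-grading ($R_g\cdot R_h\subseteq R_{gh}$, which follows from the $G$-bigrading on $M$) and with the $G$-action of (3.3); the latter is a direct bookkeeping computation using the cocycle identities for $\Phi$ much as in the proof of Lemma 3.1. Next, the comultiplication: one must check (i) that $\D_R(X)$ as defined actually lies in $R\otimes R$ — here the twist by $\Phi(x_1,x_2,x_2^{-1})$ and the right multiplication by $x_2^{-1}$ are precisely what is needed to project $\D(X)$ back into the coinvariants in the first tensor factor, using the defining equation (3.13) of $R$ — and (ii) that it is coassociative in the categorical sense, meaning $a_{R,R,R}\circ(\D_R\otimes\id)\circ\D_R=(\id\otimes\D_R)\circ\D_R$, where the associator $a$ carries the factor $\Phi(\cdot,\cdot,\cdot)^{-1}$ from (2.11). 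This last identity is the computational heart of the statement: it will reduce, after expanding both sides using the coassociativity of $\D$ on $M$ and the formula (3.12), to an equality of products of values of $\Phi$ on various group elements, which must be matched using the 3-cocycle condition for $\Phi$. The counit axiom $l_R^{-1}=(\e_R\otimes\id)\D_R$, $r_R^{-1}=(\id\otimes\e_R)\D_R$ is comparatively routine once one observes $\e(X_1\cdot x_2^{-1})$ forces $x_1=1$.

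Then I would verify that $\D_R$ and $\e_R$ are algebra maps in $_{\k G}^{\k G}\mathcal{Y}\mathcal{D}^{\Phi}$, which amounts to checking $\D_R\circ m_R = m_{R\otimes R}\circ(\D_R\otimes\D_R)$ where $m_{R\otimes R}$ is the braided tensor product multiplication from (2.2); this is where the braiding $R(X\otimes Y)=x\triangleright Y\otimes X$ of (2.12) enters, and the quasi-commutativity will manifest in the appearance of the braiding coefficients. Again one expands both sides in $M$ and reconciles the $\Phi$-factors via the cocycle identity. Finally, for the antipode one checks $m_R\circ(S_R\otimes\id)\circ\D_R = u_R\e_R = m_R\circ(\id\otimes S_R)\circ\D_R$, using the definition $S_R(X)=\frac{1}{\Phi(x,x^{-1},x)}x\cdot S(X)$ together with the extended antipode relations (3.11) for $M$; the normalizing factors are chosen exactly so that the telescoping sum collapses.

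The main obstacle I expect is the categorical coassociativity of $\D_R$ and, equally, the ``$\D_R$ is an algebra map'' identity: both require carefully tracking how the explicit associator $\Phi^{-1}$ of $_{\k G}^{\k G}\mathcal{Y}\mathcal{D}^{\Phi}$ interacts with the twisting factor $\Phi(x_1,x_2,x_2^{-1})$ built into $\D_R$ and with the right-multiplications by group elements $x_2^{-1}$, and then verifying that the resulting scalar identities in $\Phi$ are consequences of the normalized $3$-cocycle condition. This is a nonassociative refinement of the classical computation (e.g. in Andruskiewitsch--Schneider) that the coinvariant subalgebra of a pointed Hopf algebra is a braided Hopf algebra in the Yetter-Drinfeld category of its coradical, and the bulk of the work is in the extra $\Phi$-bookkeeping that has no classical counterpart.
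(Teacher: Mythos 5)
Your plan follows the paper's proof essentially step for step: verify that the four structure maps are morphisms in $_{\k G}^{\k G}\mathcal{Y}\mathcal{D}^{\Phi}$, check categorical (co)associativity against the associator $\Phi^{-1}$, check that $\D_R$ is an algebra map for the braided tensor product multiplication (where the braiding enters), and verify the antipode axiom using $\beta(X)=0$ on elements of positive degree --- and you correctly locate the difficulty in reconciling the $\Phi$-factors via the $3$-cocycle identity. The only caveat is that every one of those scalar identities is deferred rather than carried out, and since the entire content of the paper's proof is precisely those computations (several of them run to half a page of cocycle manipulation), what you have is a correct and complete roadmap rather than a proof; executing it would reproduce the paper's argument.
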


\begin{proof}
Firstly we need to show that $m_R, \D_R, \e_R, S_R$ are morphisms in $^{G}_{G}\mathcal{YD}^{\Phi}$. We only do $m_R$ for an example.
 Obviously the product $m_R$ of $R$ is $G$-bigraded. Now we need to prove that $$g\triangleright m_R(X\otimes Y)=m_R(g\triangleright (X\otimes Y))$$ for all $g\in G$
 and $G$-homogeneous elements $X\in R_{x}, Y\in R_{y}.$ On the one hand,
\begin{equation*}
\begin{split}
g\triangleright m_R(X\otimes Y)&=\frac{\Phi(gxy,g^{-1},g)}{\Phi(g,g^{-1},g)}[g(XY)]g^{-1}\\
&=\Phi(xy,g,g^{-1})[g(XY)]g^{-1}.
\end{split}
\end{equation*}
On the other hand,
\begin{equation*}
\begin{split}
&m_R(g\triangleright (X\otimes Y))=\frac{\Phi(g,x,y)\Phi(x,y,g)}{\Phi(x,g,y)}m_R(g\triangleright X\otimes g\triangleright Y)\\
&=\frac{\Phi(g,x,y)\Phi(x,y,g)\Phi(x,g,g^{-1})\Phi(y,g,g^{-1})}{\Phi(x,g,y)}[(gX)g^{-1}][(gY)g^{-1}]\\
&=\frac{\Phi(g,x,y)\Phi(x,y,g)\Phi(x,g,g^{-1})\Phi(y,g,g^{-1})\Phi(g^{-1},gy,g^{-1})\Phi(g^{-1},g,y)\Phi(gx,y,g^{-1})}
{\Phi(x,g,y)\Phi(gx,g^{-1},y)\Phi(g,x,y)\Phi(g^{-1},g,g^{-1})}[g(XY)]g^{-1}\\
&=\frac{\Phi(x,y,g)\Phi(x,g,g^{-1})\Phi(y,g,g^{-1})\Phi(g^{-1},gy,g^{-1})\Phi(g^{-1},g,y)\Phi(gx,y,g^{-1})}
{\Phi(x,g,y)\Phi(gx,g^{-1},y)\Phi(g^{-1},g,g^{-1})}[g(XY)]g^{-1}
\end{split}
\end{equation*}
So it suffices to verify
\begin{equation*}
\begin{split}
&\frac{\Phi(x,y,g)\Phi(x,g,g^{-1})\Phi(y,g,g^{-1})\Phi(g^{-1},gy,g^{-1})\Phi(g^{-1},g,y)\Phi(gx,y,g^{-1})}
{\Phi(x,g,y)\Phi(gx,g^{-1},y)\Phi(g^{-1},g,g^{-1})}=\Phi(xy,g,g^{-1}),
\end{split}
\end{equation*}
which follows from
\begin{equation*}
\begin{split}
&\frac{\Phi(x,y,g)\Phi(x,g,g^{-1})\Phi(y,g,g^{-1})\Phi(g^{-1},gy,g^{-1})\Phi(g^{-1},g,y)\Phi(gx,y,g^{-1})}
{\Phi(x,g,y)\Phi(gx,g^{-1},y)\Phi(g^{-1},g,g^{-1})\Phi(xy,g,g^{-1})}\\
&=\frac{\Phi(x,g,g^{-1})\Phi(g^{-1},gy,g^{-1})\Phi(g^{-1},g,y)\Phi(gx,y,g^{-1})}
{\Phi(x,g,y)\Phi(gx,g^{-1},y)\Phi(g^{-1},g,g^{-1})\Phi(x,gy,g^{-1})}\\
&=\frac{\Phi(x,g,g^{-1})\Phi(g^{-1},gy,g^{-1})\Phi(g^{-1},g,y)\Phi(gx,y,g^{-1})\Phi(g,y,g^{-1})}
{\Phi(x,g,y)\Phi(gx,g^{-1},y)\Phi(g^{-1},g,g^{-1})\Phi(x,gy,g^{-1})\Phi(g,y,g^{-1})}\\
&=\frac{\Phi(x,g,g^{-1})\Phi(gx,y,g^{-1})\Phi(g^{-1},g,yg^{-1})}
{\Phi(gx,g^{-1},y)\Phi(g^{-1},g,g^{-1})\Phi(xg,y,g^{-1})\Phi(x,g,yg^{-1})}\\
&=\frac{\Phi(x,g,g^{-1})\Phi(g^{-1},g,yg^{-1})}
{\Phi(g^{-1},g,g^{-1})\Phi(x,g,g^{-1})\Phi(g,g^{-1},y)}\\
&=\frac{\Phi(g^{-1},g,yg^{-1})}
{\Phi(g^{-1},g,g^{-1})\Phi(g,g^{-1},y)}\\
&=1.
\end{split}
\end{equation*}
The verification for $\D_R, \e_R, S_R$ are similar and so omitted.

Next we will show that $R$ is an algebra and a coalgebra in $_{\k G}^{\k G}\mathcal{Y}\mathcal{D}^{\Phi}.$ For any $G$-homogeneous $X,Y,Z\in R,$ we have
 $(X\cdot Y)\cdot Z=\Phi(x,y,z)^{-1}X\cdot (Y\cdot Z).$ This clearly endows $R$ an algebra structure in $_{\k G}^{\k G}\mathcal{Y}\mathcal{D}^{\Phi}.$
So here we need to verify that $\D_R$ is coassociative in the category, that is, $a\circ(\D_R\otimes \id) \circ \D_R=(\id\otimes \D_R) \circ \D_R.$

In fact, we have
\begin{eqnarray*}
(\id\otimes \D_R)\D_R(X)&=&\Phi(x_1,x_2,x_2^{-1})X_1x_{2}^{-1}\otimes \D_R(X_2)\\
&=&\Phi(x_1,x_2x_3,(x_2x_3)^{-1})\Phi(x_2,x_3,x_3^{-1})X_1(x_{2}x_3)^{-1}\otimes (X_2x_3^{-1}\otimes X_3)
\end{eqnarray*}
and
\begin{eqnarray*}
(\D_R\otimes \id)\D_R(X)&=&\Phi(x_1,x_2,x_2^{-1})\D_R(X_1x_{2}^{-1})\otimes X_2\\
&=&\Phi(x_{11}x_{12},x_2,x_2^{-1})\Phi(x_{11},x_{12},x_{12}^{-1})[(X_{11}x_{2}^{-1})x_{12}^{-1}\otimes X_{12}x_2^{-1}]\otimes X_2\\
&=&\frac{\Phi(x_1x_2,x_3,x_3^{-1})\Phi(x_1,x_2,x_2^{-1})\Phi(x_2x_3,x_3^{-1},x_2)}
{\Phi(x_1x_2x_3,x^{-1}_3,x_2^{-1})} (X_1(x_{2}x_3)^{-1}\otimes X_2x_3^{-1})\otimes X_3.
\end{eqnarray*}
So to show that $\Phi\circ(\D_R\otimes \id)\D_R(X)=(\id\otimes \D_R)\D_R(X)$, it is enough to prove that
\begin{equation}\Phi(x_1,x_2x_3,(x_2x_3)^{-1})\Phi(x_2,x_3,x_3^{-1})=\frac{\Phi(x_1x_2,x_3,x_3^{-1})\Phi(x_1,x_2,x_2^{-1})\Phi(x_2x_3,x_3^{-1},x_2)}
{\Phi(x_1x_2x_3,x^{-1}_3,x_2^{-1})\Phi(x_1,x_2,x_3)}.
\end{equation}
 The fact $\partial(\Phi)(x_1,x_2x_3,x_3^{-1},x_2^{-1})=1$ implies that
$$\frac{\Phi(x_2x_3,x_3^{-1},x_2^{-1})\Phi(x_1,x_2,x_2^{-1})\Phi(x_1,x_2x_3,x_3^{-1})}{\Phi(x_1x_2x_3,x_3^{-1},x_2^{-1})
\Phi(x_1,x_2x_3,(x_2x_3)^{-1})}=1.$$
Then to show (3.5), it is enough to prove that
$$\Phi(x_2,x_3,x_3^{-1})=\frac{\Phi(x_1x_2,x_3,x_3^{-1})}{\Phi(x_1,x_2x_3,x_3^{-1})\Phi(x_1,x_2,x_3)}.$$
But this is a direct consequence of $\partial(\Phi)(x_1,x_2,x_3,x_3^{-1})=1$.

Thirdly we need to show that $\D_R$ is an algebra morphism in $^{G}_{G}\mathcal{YD}^{\Phi}$.

On the one hand, we have
\begin{eqnarray*}
\D_R(XY)&=&\Phi(x_1y_1,x_2y_2,(x_2y_2)^{-1})(X_1Y_1)(x_{2}y_2)^{-1}\otimes X_2Y_2.
\end{eqnarray*}
On the other hand,

$\D_R(X)\D_R(Y)$\begin{eqnarray*}&=&\Phi(x_1,x_2,x_2^{-1})\Phi(y_1,y_2,y_2^{-1})(X_1x_{2}^{-1}\otimes X_2)(Y_1y_2^{-1}\otimes Y_2)\\
&=&\frac{\Phi(x_1,x_2,x_2^{-1})\Phi(y_1,y_2,y^{-1}_2)\Phi(x_1x_2,y_1,y_2)\Phi(x_1,y_1,x_2)\Phi(y_1,x_2,x^{-1}_2)\Phi(x_1,x_2,y_1x_2^{-1})}
{\Phi(x_1y_1,x_2,y_2)\Phi(x_2,y_1,x_2^{-1})\Phi(x_1,x_2,y_1)\Phi(x_1x_2,x^{-1}_2,x_2)}\\
&&\times \frac{\Phi(x_2,x^{-1}_2,x_2)\Phi(y_2,y^{-1}_2,x^{-1}_2)\Phi(x_1x_2,y_1y_2,(x_2y_2)^{-1})}
{\Phi(y_1y_2,y^{-1}_2,x^{-1}_2)\Phi(x_2,y_2,(x_2y_2)^{-1})}(X_1Y_1)(x_{2}y_2)^{-1}\otimes X_2Y_2.
\end{eqnarray*}
The the desired equality $\D_R(XY)=\D_R(X)\D_R(Y)$ follows from
\begin{eqnarray*}&&\frac{\Phi(x_1,x_2,x_2^{-1})\Phi(y_1,y_2,y^{-1}_2)\Phi(x_1x_2,y_1,y_2)\Phi(x_1,y_1,x_2)\Phi(y_1,x_2,x^{-1}_2)\Phi(x_1,x_2,y_1x_2^{-1})}
{\Phi(x_1y_1,x_2,y_2)\Phi(x_2,y_1,x_2^{-1})\Phi(x_1,x_2,y_1)\Phi(x_1x_2,x^{-1}_2,x_2)}\\
&&\times \frac{\Phi(x_2,x^{-1}_2,x_2)\Phi(y_2,y^{-1}_2,x^{-1}_2)\Phi(x_1x_2,y_1y_2,(x_2y_2)^{-1})}
{\Phi(y_1y_2,y^{-1}_2,x^{-1}_2)\Phi(x_2,y_2,(x_2y_2)^{-1})}\\
&=& \frac{\Phi(y_1,y_2,y_2^{-1})\Phi(x_1x_2,y_1,y_2)\Phi(x_1,y_1,x_2)\Phi(y_1,x_2,x^{-1}_2)\Phi(x_1,x_2,y_1x_2^{-1})}
{\Phi(x_1y_1,x_2,y_2)\Phi(x_2,y_1,x_2^{-1})\Phi(x_1,x_2,y_1)}\\
&&\times \frac{\Phi(y_2,y^{-1}_2,x^{-1}_2)\Phi(x_1x_2,y_1y_2,(x_2y_2)^{-1})}
{\Phi(y_1y_2,y^{-1}_2,x^{-1}_2)\Phi(x_2,y_2,(x_2y_2)^{-1})}\\
&=& \frac{\Phi(x_1x_2,y_1,y_2)\Phi(x_1,y_1,x_2)\Phi(y_1,x_2,x^{-1}_2)\Phi(x_1,x_2,y_1x_2^{-1})}
{\Phi(x_1y_1,x_2,y_2)\Phi(x_2,y_1,x_2^{-1})\Phi(x_1,x_2,y_1)}\\
&&\times \frac{\Phi(y_1,y_2,(y_2x_2)^{-1})\Phi(x_1x_2,y_1y_2,(x_2y_2)^{-1})}
{\Phi(x_2,y_2,(x_2y_2)^{-1})}\\
&=& \frac{\Phi(x_1,y_1,x_2)\Phi(y_1,x_2,x^{-1}_2)\Phi(x_1,x_2,y_1x_2^{-1})\Phi(x_1x_2y_1,y_2,(y_2x_2)^{-1})\Phi(x_1x_2,y_1,x_2^{-1})}
{\Phi(x_1y_1,x_2,y_2)\Phi(x_2,y_1,x_2^{-1})\Phi(x_1,x_2,y_1)\Phi(x_2,y_2,(x_2y_2)^{-1})}\\
&=&\frac{\Phi(x_1y_1,x_2,x_2^{-1})\Phi(x_1x_2y_1,y_2,(y_2x_2)^{-1})}
{\Phi(x_1y_1,x_2,y_2)\Phi(x_2,y_2,(x_2y_2)^{-1})}\\
&=&\Phi(x_1y_1,x_2y_2,(x_2y_2)^{-1}).
\end{eqnarray*}

Finally we prove that $S_R$ is an antipode of $R$ in the category, that is, we need to verify identity (2.2).

Indeed, we have
\begin{eqnarray*}
(\id\ast S_R)(X)&=&\Phi(x_1,x_2,x_2^{-1})(X_1x_{2}^{-1}) S_R(X_2)\\
&=&\frac{\Phi(x_1,x_2,x_2^{-1})}{\Phi(x_2,x^{-1}_2,x_2)}(X_1x_{2}^{-1})(x_{2}S(X_2))\\
&=&\frac{\Phi(x_1,x_2,x_2^{-1})\Phi(x_2,x_2^{-1},x_2)}{\Phi(x_2,x^{-1}_2,x_2)\Phi(x_1x_2,x_2^{-1},x_2)}X_1S(X_2)\\
&=&\frac{\Phi(x_1,x_2,x_2^{-1})\Phi(x_2,x_2^{-1},x_2)}{\Phi(x_1x_2,x^{-1}_2,x_2)}X_1\frac{1}{\Phi(x_2,x_2^{-1},x_2)}S(X_2)\\
&=&X_1\frac{1}{\Phi(x_2,x_2^{-1},x_2)}S(X_2)\\
&=&\beta(X)=0=\e(X)
\end{eqnarray*}
for all $X\in R_{\geq 1}$. Similarly, one can show that $S_R\ast \id=\e_R$.

We complete the proof of the proposition.
\end{proof}

\subsection{A quasi-version of Majid's bosonization}
The aim of this subsection is to give a quasi-version of the well-known Majid's bosonization. Let $H$ be a braided Hopf algebra in $_{\k G}^{\k G}\mathcal{Y}\mathcal{D}^{\Phi}.$
 For our purpose, we assume further that $H$ is $\mathbb{N}$-graded with $H(0)=\k$ and $H$ is generated by $H(1)$ as an algebra. We will say that $X \in H(n)$ has
  length $n.$ Note that $R$ and $B(V)$ are natural examples of such braided Hopf algebras in $_{\k G}^{\k G}\mathcal{Y}\mathcal{D}^{\Phi}.$ In the following, a
  homogeneous element means it is both $\mathbb{N}$- and $G$-homogeneous.

\begin{proposition}
Keep the assumptions on $H$ as above. Define on $H\otimes\k G$ a product by
\begin{equation}
(X\otimes g)(Y\otimes h)=\frac{\Phi(xg,y,h)\Phi(x,y,g)}{\Phi(x,g,y)\Phi(xy,g,h)}X(g\triangleright Y)\otimes gh,
\end{equation}
and a coproduct  by
\begin{equation}
\b(X\otimes g)=\Phi(x_1,x_2,g)^{-1}(X_1\otimes x_2g)\otimes (X_2\otimes g),
\end{equation}
then $H\otimes\k G$ becomes a graded Majid algebra with an antipode $(S,\alpha,\beta)$ given by
\begin{eqnarray}
&S(X\otimes g)=\frac{\Phi(g^{-1},g,g^{-1})}{\Phi(x^{-1}g^{-1},xg,g^{-1})\Phi(x,g,g^{-1})}(1\otimes x^{-1}g^{-1})(S_H(X)\otimes 1), \\
&\alpha(1\otimes g)=1,\ \ \ \alpha(X\otimes g)=0,  \\
&\beta(1\otimes g)=\Phi(g,g^{-1},g)^{-1},\ \ \beta(X\otimes g)=0,
\end{eqnarray}
where $g,h\in G$ and $X,Y$ are homogeneous elements of length  $\geq 1.$
By $H\#\k G$ we denote the Majid algebra so defined on $H\otimes \k G.$
\end{proposition}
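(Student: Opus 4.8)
The plan is to verify the Majid algebra axioms for $H\#\k G$ directly, exploiting the fact that $H$ is a braided Hopf algebra in $_{\k G}^{\k G}\mathcal{Y}\mathcal{D}^{\Phi}$ (Proposition 3.5 applies to $R$, and the Nichols algebra case is analogous). The first step is to check that the product \eqref{} is associative up to the associator inherited from $M_0=\k G$; concretely one must show
\[
\bigl[(X\otimes g)(Y\otimes h)\bigr](Z\otimes l)=\Phi(g,h,l)^{-1}(X\otimes g)\bigl[(Y\otimes h)(Z\otimes l)\bigr]
\]
after using that $H$ is an algebra in the category (so $(X(g\triangleright Y))(gh\triangleright Z)$ reassociates with the factor $\Phi(x, gy, ghz)^{-1}$, here writing $gy$ for the $G$-degree of $g\triangleright Y$, which equals $gyg^{-1}=y$ since $G$ is abelian — so really $\Phi(x,y,z)^{-1}$) together with the projective-representation identity \eqref{} for the $G$-action and the 2-cocycle identity $\partial\Phi=1$. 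This is a long but mechanical cocycle computation entirely parallel in spirit to the identities already dispatched in the proof of Proposition 3.5. One also checks the unit $1\otimes 1$ works, which is immediate from $\Phi$ normalized.

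Next I would verify that $H\otimes\k G$ is a coalgebra with the stated coproduct: coassociativity up to $\Phi$,
\[
\Phi\circ(\b\otimes\id)\circ\b=(\id\otimes\b)\circ\b,
\]
reduces, after expanding both sides using \eqref{} and the coassociativity of $\Delta_H$ in the category (which carries a factor $\Phi(x_1,x_2,x_3)^{-1}$), to another $3$-cocycle identity; the counit $\varepsilon_H\otimes\varepsilon_{\k G}$ is checked on the nose. Then comes the compatibility: $\b$ and $\varepsilon$ must be algebra maps in the sense appropriate to a Majid algebra. Since $H$ lives in a braided category, the coproduct of a product in $H$ introduces the braiding $R$, which by Proposition 2.4 is $R(X\otimes Y)=x\triangleright Y\otimes X$; the group part $\k G$ is cocommutative and the crossed terms $g\triangleright(-)$ are built into the product \eqref{} precisely so that this braiding is absorbed. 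So the verification $\b\bigl((X\otimes g)(Y\otimes h)\bigr)=\b(X\otimes g)\b(Y\otimes h)$ amounts to matching the braiding-twisted coproduct of $H$ against the $G$-conjugation, modulo yet another batch of $\Phi$-factors — this is the step I expect to be the main obstacle, both because of its length and because it is where the interplay of the nontrivial associator, the braiding, and the crossed action all collide at once.

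Finally I would check the quasi-antipode axioms \eqref{} for the triple $(S,\alpha,\beta)$ given in \eqref{}–\eqref{}. The strategy is to reduce each identity to the corresponding braided-antipode identity for $S_H$ established in Proposition 3.5(4), pushing the $\k G$-factors through using $S(g)=g^{-1}$, $\alpha(g)=1$, $\beta(g)=\Phi(g,g^{-1},g)^{-1}$ from the coradical $M_0$, and the facts $\Phi(x,y,z)=0$, $\alpha(x)=\beta(x)=0$ for elements of positive length recorded in \eqref{}. Concretely, evaluating $S((X\otimes g)_1)\,\alpha((X\otimes g)_2)\,(X\otimes g)_3$ on a homogeneous $X$ of length $\geq 1$ should collapse — because $\alpha$ kills all positive-length components — to a single term involving $S_H(X_1)\alpha_H(X_2)\ldots$ which vanishes by $\varepsilon_H(X)=0$; the length-$0$ case $X=1$ reduces to the group antipode identity on $\k G$. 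The $\beta$-side and the two $\Phi$-entangled axioms in \eqref{} are handled the same way. The coefficient in \eqref{} is engineered so that these cancellations come out exactly right; verifying that it does is once more a finite cocycle bookkeeping, and after assembling the algebra, coalgebra, compatibility, and quasi-antipode pieces, and noting that every structure map is visibly $\mathbb{N}$-graded (the product adds lengths, $\b$ preserves length, $S$ preserves length), we conclude that $H\#\k G$ is a graded Majid algebra, completing the proof.
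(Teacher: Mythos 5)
Your overall strategy --- direct verification of the coalgebra, quasi-algebra, compatibility and quasi-antipode axioms, reducing the antipode identities to the braided antipode of $S_H$ via the fact that $\alpha,\beta$ kill positive-length components --- is the paper's strategy. But there is a concrete error in the step you yourself identify as the first one: you state the quasi-associativity constraint as
\[
[(X\otimes g)(Y\otimes h)](Z\otimes l)=\Phi(g,h,l)^{-1}(X\otimes g)[(Y\otimes h)(Z\otimes l)],
\]
i.e.\ with the associator ``inherited from $M_0=\k G$'' seen only through the group components $g,h,l$. This is not the coquasi-bialgebra axiom. The associator of $M=H\#\k G$ is the extension of $\Phi$ that vanishes on positive-length elements, and the axiom reads $\Phi(a_1,b_1,c_1)\,(a_2b_2)c_2=a_1(b_1c_1)\,\Phi(a_2,b_2,c_2)$; since $\Delta(X\otimes g)=\Phi(x_1,x_2,g)^{-1}(X_1\otimes x_2g)\otimes(X_2\otimes g)$, the surviving group-like component on the left leg is $1\otimes xg$ and on the right leg is $1\otimes g$. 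The correct identity is therefore
\[
[(X\otimes g)(Y\otimes h)](Z\otimes l)=\frac{\Phi(g,h,l)}{\Phi(xg,yh,zl)}\,(X\otimes g)[(Y\otimes h)(Z\otimes l)],
\]
which is the paper's (3.11). Your version fails already on group-likes ($X=Y=Z=1$), where the product is strictly associative but your formula predicts a factor $\Phi(g,h,l)^{-1}$. The cocycle identity you would then set out to verify is false, so the computation cannot close as written.

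Two further remarks. First, for the correct identity the paper does not do the ``long but mechanical cocycle computation'' you propose: it derives the needed $\Phi$-identity (its (3.12)) by evaluating both paths of a coherence diagram in $_{\k G}^{\k G}\mathcal{Y}\mathcal{D}^{\Phi}$ (Joyal--Street coherence applied to the map $f_{A,B,C,D}$ underlying the braided tensor product) and comparing coefficients; if you insist on brute force you should at least be aware that organizing the roughly dozen $\Phi$-factors by hand is the hard part. Second, the compatibility $\b\big((X\otimes g)(Y\otimes h)\big)=\b(X\otimes g)\b(Y\otimes h)$, which you correctly flag as the heaviest remaining verification, is the one step the paper itself omits, so no shortcut is available there from the text.
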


\begin{proof}
First we show that $H \otimes \k G$ is a coalgebra under the given comultiplication. By direct computation we have
\begin{equation*}
\begin{split}
(\b\otimes \id)\b(X\otimes g)=&(\b\otimes \id)\big(\Phi(x_1,x_2,g)^{-1}(X_1\otimes x_2g)\otimes (X_2\otimes g)\big)\\
=&\frac{1}{\Phi(x_1,x_2,x_3g)\Phi(x_1x_2,x_3,g)}(X_1\otimes x_2x_3g)\otimes (X_2\otimes x_3g)\otimes (X_3\otimes g)
\end{split}
\end{equation*}
and
\begin{equation*}
\begin{split}
(\id\otimes \b)\b(X\otimes g)=&(\b\otimes \id)\big(\Phi(x_1,x_2,g)^{-1}(X_1\otimes x_2g)\otimes (X_2\otimes g)\big)\\
=&\frac{1}{\Phi(x_1,x_2x_3,g)\Phi(x_2,x_3,g)}(X_1\otimes x_2 x_3g)\otimes (X_2\otimes x_3g)\otimes (X_3\otimes g).
\end{split}
\end{equation*}
Then $(\b\otimes \id)\b(X\otimes g)=(\id\otimes \b)\b(X\otimes g)$ follows from $X_1\otimes (X_2\otimes X_3)=\Phi(x_1,x_2,x_3)(X_1\otimes X_2)\otimes X_3$ and
 the definition of 3-cocycles.

Next we show that $H \otimes \k G$ is a quasi-algebra under the given multiplication, i.e.,
\begin{equation}
[(X\otimes g)(Y\otimes h)](Z\otimes e)=\frac{\Phi(g,h,e)}{\Phi(xg,yh,ze)}(X\otimes g)[(Y\otimes h)(Z\otimes e)],
\end{equation}
for all homogeneous $X,Y,Z\in H$ and $e,f,g\in G.$
By a direct computation we have
\begin{eqnarray*}
[(X\otimes g)(Y\otimes h)](Z\otimes e)&=&\frac{\Phi(xg,y,h)\Phi(x,y,g)}{\Phi(x,g,y)\Phi(xy,g,h)}(X(g\triangleright Y)\o gh)(Z\o e)\\
&=&\frac{\Phi(xg,y,h)\Phi(x,y,g)}{\Phi(x,g,y)\Phi(xy,g,h)}\frac{\Phi(xygh,z,e)\Phi(xy,z,gh)}{\Phi(xy,gh,z)\Phi(xyz,gh,e)}[X(g\triangleright Y)]
(gh\triangleright Z)\o ghe\\
&=&\frac{\Phi(xg,y,h)\Phi(x,y,g)}{\Phi(x,g,y)\Phi(xy,g,h)}\frac{\Phi(xygh,z,e)\Phi(xy,z,gh)}{\Phi(xy,gh,z)\Phi(xyz,gh,e)}\frac{\Phi(g,z,h)}{\Phi(z,g,h)\Phi(g,h,z)}\\
&&\times [X(g\triangleright Y)](g\triangleright(h\triangleright Z))\o ghe
\end{eqnarray*}
and
\begin{eqnarray*}
(X\otimes g)[(Y\otimes h)(Z\otimes e)]&=&\frac{\Phi(yh,z,e)\Phi(y,z,h)}{\Phi(y,h,z)\Phi(yz,h,e)}(X\o g)(Y(h\triangleright Z)\o he)\\
&=&\frac{\Phi(yh,z,e)\Phi(y,z,h)}{\Phi(y,h,z)\Phi(yz,h,e)}\frac{\Phi(xg,yz,he)\Phi(x,yz,g)}{\Phi(x,g,yz)\Phi(xyz,g,he)}X[g\triangleright(Y(h\triangleright Z))]\o
ghe\\
&=&\frac{\Phi(yh,z,e)\Phi(y,z,h)}{\Phi(y,h,z)\Phi(yz,h,e)}\frac{\Phi(xg,yz,he)\Phi(x,yz,g)}{\Phi(x,g,yz)\Phi(xyz,g,he)}\frac{\Phi(y,z,g)\Phi(g,y,z)}
{\Phi(y,g,z)}\Phi(x,y,z)\\
&& \times [X(g\triangleright Y)](g\triangleright(h\triangleright Z))\o ghe.
\end{eqnarray*}
So (3.11) is equivalent to the following
\begin{eqnarray}
&&\frac{\Phi(yh,z,e)\Phi(y,z,h)}{\Phi(y,h,z)\Phi(yz,h,e)}\frac{\Phi(xg,yz,he)\Phi(x,yz,g)}{\Phi(x,g,yz)\Phi(xyz,g,he)}\frac{\Phi(g,z,h)}{\Phi(z,g,h)\Phi(g,h,z)}\\
&=&\frac{\Phi(g,h,e)}{\Phi(xg,yh,ze)}\frac{\Phi(yh,z,e)\Phi(y,z,h)}{\Phi(y,h,z)\Phi(yz,h,e)}\frac{\Phi(xg,yz,he)\Phi(x,yz,g)}{\Phi(x,g,yz)\Phi(xyz,g,he)}
\frac{\Phi(y,z,g)\Phi(g,y,z)}{\Phi(y,g,z)}\Phi(x,y,z). \notag
\end{eqnarray}

In order to verify this complicated equality, we need to apply the following commutative diagram which holds in any abstract braided tensor category $(\C,a,c)$, here $a$ is the associative constraint and $c$ is the braiding.
\begin{equation}
\xymatrix@C=2.5cm{ [(A\otimes B)\otimes (A\otimes B)]\otimes (A\otimes B)\ar[r]^{a_{A\otimes B,A\otimes B,A\otimes B}}
\ar[d]_{f_{A,B,A,B}\otimes \id_{A\otimes B}}& (A\otimes B)\otimes [(A\otimes B)\otimes (A\otimes B)]\ar[d]^{\id_{A\otimes B}\otimes f_{A,B,A,B}}& \\
   [(A\otimes A)\otimes (B\otimes B)]\otimes (A\otimes B)\ar[d]_{f_{A\otimes A,B\otimes B,A,B}} &
   (A\otimes B)\otimes [(A\otimes A)\otimes (B\otimes B)]\ar[d]^{f_{A,B ,A\otimes A,B\otimes B}}&\\
  [(A\otimes A)\otimes A]\otimes [(B\otimes B)\otimes B]\ar[r]^{a_{A,A,A}\otimes a_{B,B,B}}&[A\otimes (A\otimes A)]\otimes [B\otimes (B\otimes B)]£¬}
\end{equation} for all objects $A,B \in \C.$
Here $f_{C,D,E,F}:(C\otimes D)\otimes(E\otimes F) \To (C\otimes E)\otimes(D\otimes F)$ is an isomorphism defined by
\begin{equation}
f_{C,D,E,F}=a_{C\otimes D,E,F}\circ(a_{C,D,E}^{-1}\otimes \id) \circ (\id\otimes c_{D,E}\otimes \id)\circ(a_{C,E,D}\otimes \id)\circ a_{C\otimes E,D,F}^{-1}
\end{equation}
for all $C,D,E,F$ in $\mathcal{C}.$
The commutativity of the diagram follows from the Coherence Theorem of braided tensor categories, see \cite[Corollary 2.6]{js}.

Let $V \in {_{\k G}^{\k G}\mathcal{Y}\mathcal{D}^{\Phi}}$ with $V_g,V_h,V_e\neq 0.$ Replace $A$ by $H$ and $B$ by $V$ in the diagram (3.13). Choose nonzero elements
 $X'\in V_g,Y'\in V_h, Z'\in V_e.$ Then by a direct computation we have
\begin{eqnarray*}
&&( a_{H,H,H}\o a_{V,V,V})\circ f_{H\o H,V\o V,H,V}\circ (f_{H,V,H,V}\o \id_{H\o V})([(X\o X')\o (Y\o Y')]\o (Z\o Z'))\\
&=&\frac{\Phi(yh,z,e)\Phi(y,z,h)}{\Phi(y,h,z)\Phi(yz,h,e)}( a_{H,H,H}\o a_{V,V,V})\circ f_{H\o H,V\o V,H,V}([(X\o g\triangleright Y)\o (X'\o Y')]\o (Z\o Z'))\\
&=&\frac{\Phi(yh,z,e)\Phi(y,z,h)}{\Phi(y,h,z)\Phi(yz,h,e)}\frac{\Phi(xygh,z,e)\Phi(xy,z,gh)}{\Phi(xy,gh,z)\Phi(xyz,gh,e)}\\
&&\times ( a_{H,H,H}\o a_{V,V,V})([(X\o g\triangleright Y)\o gh\triangleright Z] \o  [(X'\o Y')\o Z')])\\
&=&\frac{\Phi(yh,z,e)\Phi(y,z,h)}{\Phi(y,h,z)\Phi(yz,h,e)}\frac{\Phi(xygh,z,e)\Phi(xy,z,gh)}{\Phi(xy,gh,z)\Phi(xyz,gh,e)}\Phi^{-1}(x,y,z)\Phi^{-1}(g,h,e)\\
&&\times ([X\o (g\triangleright Y\o gh\triangleright Z)] \o  [X'\o (Y'\o Z')]).\\
&=&\frac{\Phi(yh,z,e)\Phi(y,z,h)}{\Phi(y,h,z)\Phi(yz,h,e)}\frac{\Phi(xygh,z,e)\Phi(xy,z,gh)}{\Phi(xy,gh,z)\Phi(xyz,gh,e)}\frac{\Phi(g,z,h)}{\Phi(z,g,h)
\Phi(g,h,z)}\Phi^{-1}(x,y,z)\Phi^{-1}(g,h,e)\\
&&\times ([X\o (g\triangleright Y\o g\triangleright(h\triangleright Z))] \o  [X'\o (Y'\o Z')]).
\end{eqnarray*}
Similarly we have
\begin{eqnarray*}
&&f_{H,V,H\o H,V\o V}\circ (\id_{H\o V}\o f_{H,V,H,V})\circ a_{H\o V,H\o V,H\o V}([(X\o X')\o (Y\o Y')]\o (Z\o Z'))\\
&=&\Phi^{-1}(xg,yh,ze)\frac{\Phi(yh,z,e)\Phi(y,z,h)}{\Phi(y,h,z)\Phi(yz,h,e)}\frac{\Phi(xg,yz,he)\Phi(x,yz,g)}{\Phi(x,g,yz)\Phi(xyz,g,he)}
\frac{\Phi(y,z,g)\Phi(g,y,z)}{\Phi(y,g,z)}\\
&&\times ([X\o (g\triangleright Y\o g\triangleright(h\triangleright Z))] \o  [X'\o (Y'\o Z')]).
\end{eqnarray*}

So the previous two identities are equal. By comparing their coefficients, (3.12) follows, and so does (3.11).

The proof of the fact that the product defined by (3.9) is a coalgebra map follows by a direct verification and so we omit the details.

Finally we prove that $(S,\alpha,\beta)$ defined by (3.8-3.10) is a quasi-antipode of $H\otimes \k G,$  i.e., the identities of (3.1) hold. It is enough to verify
them for elements $X\otimes g$ with $\operatorname{length}(X) \ge 1.$ Here we only prove $ S(a_1)\alpha(a_2)a_3=\alpha(a)\ \mathrm{and}\ a_1\beta(a_2)S(a_3)=\beta(a)$
since other identities are obvious.

Write $(\b\otimes \id) \circ \b (X\otimes g)=\sum(X\otimes g)_1\otimes(X\otimes g)_2\otimes (X\otimes g)_3.$ Note that $\alpha((X\otimes g)_2) \neq 0$ (
 $\beta((X\otimes g)_2)\neq 0$) if and only if $(X\otimes g)_2$ is a scalar of a group-like element, hence
\begin{equation*}
\begin{split}
&S((X\otimes g)_1)\alpha((X\otimes g)_2)(X\otimes g)_3\\
=&\Phi(x_1,x_2,g)^{-1}S(X_1\otimes x_2g)(X_2\otimes g)\\
=&\frac{\Phi(x_2^{-1}g^{-1},x_2g,x_2^{-1}g^{-1})}
{\Phi(x_1,x_2,g)\Phi(x^{-1}g^{-1},xg,x_2^{-1}g^{-1})\Phi(x_1,x_2^{-1}g^{-1},x_2^{-1}g^{-1})}\\
&\times [(1\otimes x^{-1}g^{-1})(S_H(X_1)\otimes 1)](X_2\otimes g).\\
=&\frac{\Phi(x_2^{-1}g^{-1},x_2g,x_2^{-1}g^{-1})\Phi(x_1,x_2,g)}
{\Phi(x_1,x_2,g)\Phi(x^{-1}g^{-1},xg,x_2^{-1}g^{-1})\Phi(x_1,x_2^{-1}g^{-1},x_2^{-1}g^{-1})
\Phi(x^{-1}g^{-1},x_1,x_2g)}\\
&\times (1\otimes x^{-1}g^{-1})(S_H(X_1)X_2\otimes g)\\
=&(1\otimes x^{-1}g^{-1})(S_H(X_1)X_2\otimes g)\\
=&(1\otimes x^{-1}g^{-1})(\varepsilon(X)\otimes g)\\
=&0.
\end{split}
\end{equation*}
So we have proved that $S((X\otimes g)_1)\alpha((X\otimes g)_2)(X\otimes g)_3=\alpha (X\otimes g).$ In a similar manner one can prove
$(X\otimes g)_1\beta((X\otimes g)_2)S((X\otimes g)_3)=\beta(X\otimes g).$

We complete the proof of the proposition.
\end{proof}

\begin{proposition}
Let $M$ and $R$ be as before, and $R\#kG$ the Majid algebra as defined in the previous proposition. Then $R\#kG \cong M.$
\end{proposition}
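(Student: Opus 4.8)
The plan is to construct an explicit isomorphism $\Psi\colon R\#\k G \To M$ and show it is a morphism of Majid algebras. Since $M$ is generated as a Majid algebra by $G$ and the skew-primitive elements, and these all lie in the image, surjectivity will be automatic once $\Psi$ is a Majid algebra map; a dimension count (or injectivity on each graded/$G$-graded piece) will then give bijectivity. The natural candidate is the multiplication map
\[
\Psi\colon R\otimes \k G \To M, \qquad X\otimes g \longmapsto X\cdot g,
\]
where the product on the right is that of $M$. First I would check that $\Psi$ is bijective. The key input here is the Radford-type projection structure: the projection $\pi\colon M\to M_0=\k G$ splits the inclusion $\k G\hookrightarrow M$, and $R=M^{\operatorname{coinv}M_0}$ is the algebra of coinvariants. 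One shows, exactly as in the classical (associative) biproduct theory but keeping track of the associator, that every $m\in M$ can be written uniquely as a sum of terms $X\cdot g$ with $X\in R$ homogeneous and $g\in G$; concretely the inverse of $\Psi$ is (up to a normalizing $\Phi$-factor coming from reassociating) $m\mapsto \sum m_1 S(\pi(m_2))\otimes \pi(m_3)$, using that $m_1 S(\pi(m_2))\in R$ by the quasi-antipode axioms (3.1) and the definition (3.3) of $R$. Because $M$ is $G$-bigraded with $R_g={}^{1}M^{g}$ and right multiplication by $g$ carries ${}^{1}M^{g}$ isomorphically onto ${}^{g'}M^{g'g}$ summed appropriately, $\Psi$ restricts to a linear isomorphism on each bihomogeneous component, hence is bijective.

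Next I would verify that $\Psi$ intertwines the structure maps. For multiplication: one must check
\[
\Psi\big((X\otimes g)(Y\otimes h)\big)=\Psi(X\otimes g)\cdot\Psi(Y\otimes h),
\]
i.e.\ that $\big(\tfrac{\Phi(xg,y,h)\Phi(x,y,g)}{\Phi(x,g,y)\Phi(xy,g,h)}X(g\triangleright Y)\big)\cdot gh$ equals $(X\cdot g)\cdot(Y\cdot h)$ in $M$. This is a direct computation: expand $(X\cdot g)\cdot(Y\cdot h)$ by repeatedly applying the quasi-associativity constraint of $M$ to move $g$ past $Y$, use the defining relation (3.2) $f\triangleright X=(f\cdot X)\cdot f^{-1}$ (up to its $\Phi$-factor) from the proof of Lemma 3.1 to recognize the conjugation $g\cdot Y=(g\triangleright Y)\cdot g$, and collect the resulting associator coefficients; they must match the coefficient in (3.6), and this is precisely the kind of 3-cocycle identity already exercised in the proof of Proposition 3.3. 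For comultiplication: I would check $\Delta_M(X\cdot g)$ agrees with $(\Psi\otimes\Psi)\Delta(X\otimes g)$, where $\Delta(X\otimes g)$ is given by (3.7). The point is that $\Delta_M(X\cdot g)=\Delta_M(X)\Delta_M(g)$ (an algebra-map-in-a-quasi-sense statement for $M$), $\Delta_M(g)=g\otimes g$, and $\Delta_M(X)$ relates to $\Delta_R(X)$ via the formula in Proposition 3.3(2): $\Delta_M(X)=\Phi(x_1,x_2,x_2^{-1})^{-1}\big((X_1\cdot x_2^{-1})\otimes \text{(stuff)}\big)\cdots$; unwinding this and reassociating produces exactly the associator factor $\Phi(x_1,x_2,g)^{-1}$ in (3.7). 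Counit and the antipode $(S,\alpha,\beta)$ are then matched the same way using the explicit formulas (3.8)--(3.10) together with Proposition 3.3(3),(4) and the antipode relations (3.1) for $M$ restricted to $M_0=\k G$.

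The main obstacle, as usual in this quasi-setting, is bookkeeping rather than ideas: every manipulation that would be a one-line "move the group element through" in the Hopf case now generates a product of associator values, and one must verify that the accumulated $\Phi$-monomial is identically $1$ (or equals the prescribed coefficient) by the normalized $3$-cocycle condition. The cleanest route to control this is to interpret $\Psi$ abstractly: $M$ with its coradical projection is exactly a "braided Hopf algebra $R$ in ${}^{\k G}_{\k G}\mathcal{YD}^\Phi$ bosonized by $\k G$", and the formulas (3.6)--(3.10) were derived in Proposition 3.4 precisely so that $H\#\k G$ models this. So an alternative, and perhaps preferable, organization is: (i) show directly that the pair $(R,\pi)$ exhibits $M$ as a "Radford biproduct" in the quasi-Hopf sense — namely $M\cong R\otimes\k G$ as coalgebras via $\Psi^{-1}$ and as algebras via $\Psi$, with the induced Yetter-Drinfeld action on $R$ being the one from Lemma 3.1; (ii) observe that this identifies $M$'s remaining structure maps with those transported from $R\#\k G$ by uniqueness. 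Either way, having Propositions 3.3 and 3.4 in hand reduces the present proposition to checking that the reconstruction $\Psi$ is well defined and bijective and that it is compatible with the four structure maps, each of which is a finite $\Phi$-cocycle computation of the same flavor already carried out above.
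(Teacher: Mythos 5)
Your proposal is correct and follows essentially the same route as the paper: the paper also takes the multiplication map $F(X\otimes g)=X\cdot g$, deduces bijectivity from the $G$-bigrading (surjectivity because $X\in{}^{g}M^{h}$ gives $Xh^{-1}\in R$), and verifies compatibility with the product, the coproduct formula (3.7), and the quasi-antipode by the same direct $3$-cocycle computations. The only difference is one of emphasis: you spell out the inverse map and the Radford-projection interpretation, which the paper leaves implicit.
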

\begin{proof}

 Define a map
\begin{eqnarray*}
F:R\#\k G &\To &  M\\
X\otimes g &\to& X\cdot g.
\end{eqnarray*}
It is clear that $F$ is a quasi-algebra map. Moreover, as a linear map $F$ is injective by the assumptions on $M$ and $R.$
For any $X\in {^{g}M^{h}}$, clearly $Xh^{-1} \in R$. This implies that $F$ is surjective and hence $F$ is bijective. We verify in the following that $F$ is
also a coalgebra map, i.e.
\begin{equation}
\b \circ F(X\otimes g)=(F\otimes F) \circ \b(X\otimes g).
\end{equation} Then $F$ is in fact an isomorphism of Majid algebras.

By direct calculation, the left hand side of (3.15) is $$\b \circ F(X\otimes g)=\b(X\cdot g)=\b(X)\b(g)=(X_1\cdot x_2)\cdot g\otimes X_2\cdot g=\Phi(x_1,x_2,g)^{-1}
X_1\cdot (x_2g)\otimes X_2\cdot g,$$ and the right hand side is
\begin{equation*}
\begin{split}
(F\otimes F) \circ \b(X\otimes g)=&(F\otimes F)(\Phi(x_1,x_2,g)^{-1}(X_1\otimes x_2g)\otimes (X_2\otimes g)) \\
=&\Phi(x_1,x_2,g)^{-1}X_1(x_2g)\otimes X_2g.
\end{split}
\end{equation*}
Hence (3.15) is proved, i.e., $F$ is a coalgebra map.

It is obvious that $F$ preserves the quasi-antipodes of $R\# \k G$ and $M.$  So $F \colon R\# \k G \to M$ is the desired isomorphism of Majid algebras.
\end{proof}

\begin{remark}
 This proposition can be seen as a quasi-version of Majid's bosonization. Needless to say, if the 3-cocycle $\Phi$ is trivial, then we recover the corresponding result of
 Majid's bosonization.
\end{remark}

\subsection{$R$ is a Nichols algebra in $_{\k G}^{\k G}\mathcal{Y}\mathcal{D}^{\Phi}$}
\begin{theorem}
Keep the assumptions on $M$ and $R.$ Then $R$ is a Nichols algebra in $_{\k G}^{\k G}\mathcal{Y}\mathcal{D}^{\Phi}.$
\end{theorem}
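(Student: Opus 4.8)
The plan is to verify the standard characterization of Nichols algebras inside a braided tensor category: a graded braided Hopf algebra $R=\bigoplus_{n\ge 0}R(n)$ in $_{\k G}^{\k G}\mathcal{Y}\mathcal{D}^{\Phi}$ with $R(0)=\k$ equals the Nichols algebra $\mathcal{B}(V)$ of $V:=R(1)$ exactly when $R$ is generated as an algebra by $R(1)$ and its space $P(R)$ of primitive elements coincides with $R(1)$ (this is the analogue in $_{\k G}^{\k G}\mathcal{Y}\mathcal{D}^{\Phi}$ of the familiar criterion for Yetter--Drinfeld categories of Hopf algebras, cf.\ \cite{as2}, and it follows from the universal-property definition of $\mathcal{B}(V)$ adopted above). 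By Proposition 3.2, $R$ is a braided Hopf algebra in $_{\k G}^{\k G}\mathcal{Y}\mathcal{D}^{\Phi}$; it carries the $\mathbb{N}$-grading inherited from $M$, and the coproduct $\Delta_R$ of Proposition 3.2 is graded because multiplication by a group element does not affect the $\mathbb{N}$-degree. Moreover $R(0)=\k$: since $M(0)=\k G$ and $\Delta(g)=g\otimes g$, the only coinvariants in $M(0)$ are the scalars. Finally, for any graded braided Hopf algebra with $R(0)=\k$ the grading and the counit axiom force $\Delta_R(X)=1\otimes X+X\otimes 1$ for every $X\in R(1)$, so $R(1)\subseteq P(R)$ automatically. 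Thus it remains to prove that $R$ is generated by $R(1)$ and that $P(R)\subseteq R(1)$.

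For the generation I would argue as follows. By hypothesis $M$ is generated by $G$ and a set of skew-primitive elements; since every skew-primitive lies in the first term $M_1=M(0)\oplus M(1)$ of the coradical filtration and $M$ is coradically graded, each such generator may be replaced by its degree-one component, which is again skew-primitive, so that $M$ is generated by $\k G=M(0)$ together with a subset of $M(1)$. Now let $R'\subseteq R$ be the subalgebra generated by $R(1)$. Since $\Delta_R(R(1))\subseteq \k\otimes R(1)+R(1)\otimes\k$ and $\Delta_R$ is an algebra map, $R'$ is a braided Hopf subalgebra of $R$; hence by Proposition 3.3 the bosonization $R'\#\k G$ is a Majid subalgebra of $R\#\k G$. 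Under the isomorphism $F\colon R\#\k G\To M$, $X\otimes g\mapsto X\cdot g$, of Proposition 3.4, $R'\#\k G$ contains both $\k G$ and $R(1)$, hence all the generators of $M$; therefore $R'\#\k G=M$ and $R'=R$.

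The crucial remaining point is $P(R)\subseteq R(1)$. Take a $G$-homogeneous primitive $r\in R(n)$, of $G$-degree $g$. Substituting $\Delta_R(r)=1\otimes r+r\otimes 1$ into the coproduct formula of Proposition 3.3 with trivial group component, and then applying the coalgebra isomorphism $F$ of Proposition 3.4, one finds after a short computation that $r$, regarded as an element of $M$, satisfies $\Delta(r)=g\otimes r+r\otimes 1$; that is, $r$ is a genuine $(g,1)$-skew-primitive of $M$. Hence $r\in M_1=M(0)\oplus M(1)$, and since $r$ is $\mathbb{N}$-homogeneous of degree $n$ this forces $n\le 1$. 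The case $n=0$ cannot occur, because then $r\in R(0)=\k$ and a nonzero scalar is never primitive. Therefore $r\in R(1)$, which together with the previous observation gives $P(R)=R(1)$. All three conditions of the characterization are now verified, so $R\cong\mathcal{B}(V)$.

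The step I expect to require the most care is this last one: one must make sure that the bosonization of Proposition 3.3 really sends a braided primitive of $R$ sitting in degree $n$ to a bona fide skew-primitive element of $M$ in degree $n$, and not merely to an element of the $n$-th term of the coradical filtration, so that the hypothesis that $M$ is coradically graded can genuinely be invoked. A secondary point worth double-checking is that the characterization of Nichols algebras used in the first paragraph holds verbatim in $_{\k G}^{\k G}\mathcal{Y}\mathcal{D}^{\Phi}$; concretely, once generation in degree one and $P(R)=R(1)$ are available, the canonical surjection $T(V)\twoheadrightarrow R$ has kernel contained in the maximal graded Hopf ideal generated in degrees $\ge 2$, and maximality forces this containment to be an equality, identifying $R$ with $\mathcal{B}(V)$.
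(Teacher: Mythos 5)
Your proof is correct, but it takes a genuinely different route from the paper's. The paper uses the universal property of $\mathcal{B}(V)$ to produce a surjection of braided Hopf algebras $R\twoheadrightarrow\mathcal{B}(V)$, bosonizes it to a morphism $R\#\k G\to\mathcal{B}(V)\#\k G$ of \emph{ordinary} coalgebras, and then invokes the standard injectivity criterion \cite[Theorem 5.3.1]{mon} (a coalgebra map injective on the first term of the coradical filtration is injective) to conclude that this surjection is an isomorphism. You instead verify the intrinsic characterization of Nichols algebras --- connectedness, generation in degree one, and $P(R)=R(1)$ --- the key step being that a braided primitive of $R$ of $\mathbb{N}$-degree $n$ bosonizes to a genuine $(g,1)$-skew-primitive of $M$, hence lies in $M_1=M(0)\oplus M(1)$, forcing $n=1$; this computation checks out against the coproduct formula of Proposition 3.3. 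Both arguments ultimately rest on the coradical grading of $M$, but yours is more self-contained at the level of $R$ itself (and more careful about why $R$ is generated by $R(1)$, which the paper simply asserts), at the cost of having to re-derive the primitive-element characterization of $\mathcal{B}(V)$ from the paper's maximal-ideal definition. One small imprecision in your closing sketch: maximality of $I$ only yields $\ker\bigl(T(V)\to R\bigr)\subseteq I$; the reverse inclusion is exactly where $P(R)=R(1)$ enters, via the standard observation that the lowest-degree homogeneous component of the graded Hopf ideal $I/\ker$ of $R$ would consist of primitives of degree $\ge 2$. Since you have established that hypothesis, the argument closes, but the equality is not a consequence of maximality alone.
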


\begin{proof}
Choose a basis $\{X_i\}$ of $R(1)$ such that the $G$-degree of $X_i$ is $g_i.$ Denote $V=R(1)$ and note that $R$ is generated by $V.$ On the other hand, note that
 $V$ is a natural object in $_{\k G}^{\k G}\mathcal{Y}\mathcal{D}^{\Phi}$ and there is a Nichols algebra $B(V)$ in the category. By the universal property of
  $B(V),$ it is clear that there is an epimorphism of  braided Hopf algebras $F:R \to B(V)$ determined by $F(X_i)=X_i.$ This induces an epimorphism of Majid
  algebras \begin{eqnarray*}
\widetilde{F}: R\#\k G &\To& B(V)\#\k G,\\
                X_i \otimes g&\to & X_i \otimes g.
\end{eqnarray*}
Since $R\#\k G$ and $B(V)\#\k G$ are ordinary coalgebras, and $\widetilde{F}$ is injective in group-like elements and skew-primitive elements, hence so is the
restriction of $\widetilde{F}$ to the first term of the coradical filtration of $R\#\k G.$ Now by \cite[Theorem 5.3.1]{mon}, $\widetilde{F}$ is injective.
Therefore, $\widetilde{F}$ is a bijection and hence $F: R \to B(V)$ is an isomorphism by comparing dimensions.
\end{proof}

\section{Braided linear spaces in $_{\k G}^{\k G}\mathcal{Y}\mathcal{D}^{\Phi}$}
In this section, we give a classification of braided linear spaces in $_{\k G}^{\k G}\mathcal{Y}\mathcal{D}^{\Phi}.$
We need some notations about Gaussian binomials in our exposition. For any $\hbar \in \k$, define
$l_\hbar=1+\hbar+\cdots +\hbar^{l-1}$ and $l!_\hbar=1_\hbar \cdots l_\hbar$. The Gaussian binomial coefficient is defined by
 $\binom{l+m}{l}_\hbar:=\frac{(l+m)!_\hbar}{l!_\hbar m!_\hbar}$.

\subsection{Normalized 2-cocycles and 3-cocycles on $G$} Recall that a normalized 2-cocycle on a group $G$ is a function $\omega: G\times G\To \k^*$ such that
$\omega(e,f)\omega(ef,g)=\omega(e,fg)\omega(f,g)$ and $\omega(1,f)=\omega(e,1)=1$ for all $e,f,g\in G.$ A 2-cocycle $\omega$ of $G$ is called symmetric
if $\omega(e,f)=\omega(f,e)$ for all $e,f\in G.$

A function $\Phi:G\times G\times G\mapsto \k^*$ is called a 3-cocycle on $G$ if
\begin{equation}
\Phi(ef,g,h)\Phi(e,f,gh)=\Phi(e,f,g)\Phi(e,fg,h)\Phi(f,g,h)
\end{equation}
for all $e,f,g,h \in G$, and it is called normalized if $\Phi(f,1,g)=1.$  Let $G=\Z_{m_1}\times\Z_{m_2}\times \cdots \times \Z_{m_N}$
and $e_i$ a fixed generator of $\Z_{m_i}$. Denote by $A$ the set of all integer sequences
\begin{equation}
(a_1,\cdots,a_l,\cdots,a_N,a_{12}\cdots,a_{ij},\cdots,a_{N-1,N},a_{123}\cdots,a_{rst},\cdots,a_{N-2,N-1,N})
\end{equation}
such that $0\leq a_l< m_l, 0\leq a_{ij}<(m_i,m_j), 0\leq a_{rst}<(m_r,m_s,m_t)$ for $1\leq l \leq N, 1\leq i<j\leq  N,1\leq r<s<t\leq N$ where $a_{ij}$ and
$a_{rst}$ are listed according to the lexicographic order. For brevity, the sequence of form (4.2) will be denoted by $\overline{a}$ in the following. Now for
 any $\overline{a}\in A$, define a map $\Phi_{\overline{a}}:G\times G\times G\longrightarrow \k^*$ by
\begin{equation}
\Phi_{\overline{a}}(e_1^{i_1}\cdots e_N^{i_N},e_1^{j_1}\cdots e_N^{j_N},e_1^{k_1}\cdots e_N^{k_N})
=\prod_{l=1}^N\zeta_{m_l}^{a_li_l[\frac{j_l+k_l}{m_l}]}\prod_{1\leq s<t\leq N}\zeta_{m_t}^{a_{st}i_t[\frac{j_s+k_s}{m_s}]}\prod_{1\leq r< s<t\leq N}
\zeta_{(m_r,m_s,m_t)}^{a_{rst}k_rj_si_t}
\end{equation}
where $[x]$ means the integer part of $x$ and $\zeta_{m_l}$ is an $m_l$-th primitive root of unity. According to \cite{bgrc1,bgrc2}, $\{\Phi_{\overline{a}}|
\overline{a}\in A\}$ is a complete set of normalized 3-cocycles on $G$ up to cohomology.

\subsection{Quasi-characters}
\begin{definition}
A function $\chi:G\To \k^*$ is called a quasi-character associated to a 2-cocycle $\omega$ on $G$ if
\begin{equation}
\chi(f)\chi(g)=\omega(f,g)\chi(fg),\ \ \ \chi(1)=1
\end{equation}
for all $f,g\in G.$
\end{definition}

It is clear that there is a quasi-character associated to $\omega$ if and only if $\omega$ is symmetric. Recall that in Subsection 2.2, a set of
 2-cocycles $\widetilde{\Phi}_g$ on $G$ were defined for a given 3-cocycle $\Phi.$ For later applications we need the following definition associated to such a set.

\begin{definition}
A series of quasi-characters $\{\chi_1,\cdots,\chi_n\}$ of $G$ is called admissible with respect to $\Phi$ if $\chi_i$ is associated to
 $\widetilde{\Phi}_{g_i}$ with $g_i\neq 1$ for $1\leq i\leq n$ such that $G=\langle g_1,\cdots, g_n\rangle$ and
\begin{equation}
\chi_i(g_j)\chi_j(g_i)=1,\ \ \ \chi_i(g_i)\neq 1
\end{equation}
for all $1\leq i,j \leq n.$
\end{definition}

Note that, in the preceding definition, the $g_i$'s are not necessarily distinct.

\begin{proposition}
 If there is a series of quasi-characters $\{\chi_1,\cdots,\chi_n\}$ of $G$ such that $\chi_i$ is associated to $\widetilde{\Phi}_{g_i}$ for each $i$
 and $G=\langle g_1,\cdots,\ g_n\rangle,$  then up to cohomology $\Phi$ must be of the form
\begin{equation}
\Phi(e_1^{i_1}\cdots e_N^{i_N},e_1^{j_1}\cdots e_N^{j_N},e_1^{k_1}\cdots e_N^{k_N})
=\prod_{l=1}^N\zeta_l^{a_li_l[\frac{j_l+k_l}{m_l}]}\prod_{1\leq s<t\leq N}\zeta_{m_t}^{a_{st}i_t[\frac{j_s+k_s}{m_s}]}.
\end{equation}

\end{proposition}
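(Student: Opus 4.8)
The plan is to reduce to the explicit cocycles $\Phi_{\overline{a}}$ of (4.3) and then read off the constraint forced by the existence of the $\chi_i$. Recall from Subsection 2.2 that $\widetilde{\Phi}_g(x,y)=\frac{\Phi(g,x,y)\Phi(x,y,g)}{\Phi(x,g,y)}$, and recall that a quasi-character associated to a $2$-cocycle $\omega$ exists if and only if $\omega$ is symmetric; so the hypothesis says precisely that $\widetilde{\Phi}_{g_i}$ is a symmetric $2$-cocycle for each $i$ and that $G=\langle g_1,\dots,g_n\rangle$. First I would note that this property is invariant under replacing $\Phi$ by a cohomologous $3$-cocycle: if $\Phi'=\Phi\cdot\partial\psi$ for a normalized $2$-cochain $\psi$, a short computation using that $G$ is abelian gives $\widetilde{\Phi'}_g(x,y)=\widetilde{\Phi}_g(x,y)\cdot\mu_g(x)\mu_g(y)\mu_g(xy)^{-1}$ with $\mu_g(x)=\psi(x,g)\psi(g,x)^{-1}$, and the correction factor is symmetric in $x,y$; hence $\widetilde{\Phi}_g$ is symmetric iff $\widetilde{\Phi'}_g$ is. By the classification (4.3) we may therefore assume from now on that $\Phi=\Phi_{\overline{a}}$ for some $\overline{a}\in A$, with each $\widetilde{\Phi}_{g_i}$ still symmetric.

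Next I would isolate the obstruction to the symmetry of $\widetilde{\Phi}_g$. For $g,x,y\in G$ set
\[
\tau(g,x,y):=\frac{\widetilde{\Phi}_g(x,y)}{\widetilde{\Phi}_g(y,x)}=\frac{\Phi(g,x,y)\,\Phi(x,y,g)\,\Phi(y,g,x)}{\Phi(x,g,y)\,\Phi(g,y,x)\,\Phi(y,x,g)}.
\]
From this closed form one checks directly that interchanging any two of the three arguments inverts $\tau$, and --- using the $3$-cocycle identity together with the commutativity of $G$ --- that $\tau$ is multiplicative in each argument; that is, $\tau\colon G\times G\times G\to\k^{*}$ is a trilinear alternating form. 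Consequently $\widetilde{\Phi}_g$ is symmetric if and only if $\tau(g,-,-)\equiv 1$. Since $\tau$ is multiplicative in its first slot and $g_1,\dots,g_n$ generate $G$, the hypothesis upgrades to $\tau\equiv 1$ on all of $G\times G\times G$.

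Finally comes the explicit computation with $\Phi=\Phi_{\overline{a}}$. Writing $g=\prod_le_l^{p_l}$, $x=\prod_le_l^{i_l}$, $y=\prod_le_l^{j_l}$ and plugging (4.3) into the definition of $\widetilde{\Phi}_g$, one finds that the $a_l$-part of $\Phi_{\overline{a}}$ contributes the factor $\prod_l\zeta_{m_l}^{a_lp_l[\frac{i_l+j_l}{m_l}]}$ and the $a_{st}$-part the factor $\prod_{s<t}\zeta_{m_t}^{a_{st}p_t[\frac{i_s+j_s}{m_s}]}$, both symmetric under $x\leftrightarrow y$ and hence absent from $\tau$; only the $a_{rst}$-part survives, and a direct expansion gives
\[
\tau(g,x,y)=\prod_{1\le r<s<t\le N}\zeta_{(m_r,m_s,m_t)}^{-a_{rst}\,D_{rst}(g,x,y)},\qquad
D_{rst}(g,x,y)=\det\begin{pmatrix}p_r&p_s&p_t\\ i_r&i_s&i_t\\ j_r&j_s&j_t\end{pmatrix}.
\]
Fixing a triple $r<s<t$ and specializing $g=e_r$, $x=e_s$, $y=e_t$ makes every $D_{r's't'}$ with $\{r',s',t'\}\ne\{r,s,t\}$ vanish and leaves $\tau(e_r,e_s,e_t)=\zeta_{(m_r,m_s,m_t)}^{-a_{rst}}$. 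As $\tau\equiv 1$ and $0\le a_{rst}<(m_r,m_s,m_t)$, this forces $a_{rst}=0$ for all $r<s<t$, and $\Phi_{\overline{a}}$ collapses to the asserted form.

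The main obstacle is the middle step. Establishing that $\tau$ is genuinely multiplicative in each variable is the one non-formal point: it is a careful manipulation of the $3$-cocycle identity (and it is also what makes the cohomology-invariance of the hypothesis transparent, since a trilinear alternating form depends only on the class of $\Phi$). Everything after that is bookkeeping: confirming that the $a_l$- and $a_{st}$-parts of $\widetilde{\Phi}_g$ (for $\Phi=\Phi_{\overline{a}}$) are symmetric in $x,y$ while the $a_{rst}$-part produces exactly the $3\times 3$ determinant $D_{rst}$, and then the trivial passage from the generators $g_i$ to all of $G$ by multiplicativity of $\tau$ in its first argument.
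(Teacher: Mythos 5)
Your proof is correct, and while it rests on the same mechanism as the paper's, it packages the computation differently. The paper also reduces to $\Phi=\Phi_{\overline{a}}$ and then evaluates $\Phi(g_i,e_s,e_t)$ in two ways --- once directly from (4.3), and once using the symmetry of $\widetilde{\Phi}_{g_i}$ to write $\Phi(g_i,e_s,e_t)=\Phi(e_s,g_i,e_t)/\Phi(e_s,e_t,g_i)$ --- before taking $\prod_i(\cdot)^{k_i}$ over an expression $e_r=g_1^{k_1}\cdots g_n^{k_n}$ to isolate $\zeta_{(m_r,m_s,m_t)}^{a_{rst}}$; your passage from $\tau(g_i,-,-)\equiv 1$ to $\tau(e_r,e_s,e_t)=1$ by multiplicativity in the first slot is the same use of the generation hypothesis. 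Your reorganization buys two things: the antisymmetrization $\tau$ makes it conceptually clear that the $a_l$- and $a_{st}$-parts can never obstruct symmetry and that only the alternating $a_{rst}$-part is at stake, and your verification that $\widetilde{\Phi'}_g=\widetilde{\Phi}_g\cdot\partial\mu_g$ with $\mu_g(x)=\psi(x,g)\psi(g,x)^{-1}$ supplies a step the paper leaves implicit, namely that the hypothesis is preserved when $\Phi$ is replaced by the cohomologous representative $\Phi_{\overline{a}}$. The one assertion you state without proof --- trilinearity and alternation of $\tau$ for an arbitrary $3$-cocycle on an abelian group --- is a classical fact, but more to the point you do not need it in that generality: once you have $\tau=\prod_{r<s<t}\zeta_{(m_r,m_s,m_t)}^{\pm a_{rst}D_{rst}}$ for $\Phi_{\overline{a}}$, multiplicativity in the first argument follows from linearity of the determinant in its first row (well defined modulo the $m_l$ since $(m_r,m_s,m_t)$ divides each of them), which is all the generator-to-basis step requires. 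So the argument closes up as a self-contained proof.
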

\begin{proof}
It is known that $\Phi$ can be chosen as the form of formula (4.3). We need to prove that the assumptions in the proposition lead to $a_{rst}=0$ for all
$1\leq r<s<t\leq N.$

As $G=\langle g_1,\cdots,g_n\rangle,$ one has $e_r=g_1^{k_1}\cdots g_n^{k_n}$ for some $k_1<|g_1|,\cdots, k_n<|g_n|.$ Here and below, $|g|$ denotes the order
of $g$ in a group. Write $g_i=e_1^{c_{i1}}\cdots \ e_N^{c_{iN}}$ for $1\leq i\leq n,$ then we have
\begin{equation*}
\sum_{i=1}^n k_ic_{il} \equiv \begin{cases} 0 \ \mod m_l, & l\neq r; \\ 1 \ \mod m_r, & l=r.
\end{cases}
\end{equation*}
By (4.3) we can see that
\begin{equation*}
\Phi(g_i,e_s,e_t)
=\prod_{1\leq j<s}\zeta_{(m_j,m_s,m_t)}^{c_{ij}a_{jst}}.
\end{equation*}
It follows that $$\prod_{i=1}^{n}\Phi(g_i,e_s,e_t)^{k_i}= \prod_{1\leq j<s}\zeta_{(m_j,m_s,m_t)}^{a_{jst}(\sum_{i=1}^nk_ic_{ij})}=\zeta_{(m_r,m_s,m_t)}^{a_{rst}}.$$

On the other hand, by direct computation we have $\widetilde{\Phi}_{g_i}(e_s,e_t)=1$ if $s <t.$ Since there is a quasi-character associated to
the 2-cocycle $\widetilde{\Phi}_{g_i},$ it is symmetric. This leads to $$\widetilde{\Phi}_{g_i}(e_t,e_s)=\widetilde{\Phi}_{g_i}(e_s,e_t)=1.$$
This implies
\begin{equation*}
\Phi(g_i,e_s,e_t)=\frac{\Phi(e_s,g_i,e_t)}{\Phi(e_s,e_t,g_i)}
=\prod_{s<p<t}\zeta_{(m_s,m_p,m_t)}^{c_{ip}a_{spt}}\bigg[\prod_{t<q<N}\zeta_{(m_s,m_t,m_q)}^{c_{iq}a_{stq}}\bigg]^{-1}.
\end{equation*}
Then
\begin{equation*}
\begin{split}
\prod_{i=1}^{n}\Phi(g_i,e_s,e_t)^{k_i}=&\prod_{i=1}^n\bigg\{\prod_{s<p<t}\zeta_{(m_s,m_p,m_t)}^{c_{ip}a_{spt}}\bigg[\prod_{t<q<N}
\zeta_{(m_s,m_t,m_q)}^{c_{iq}a_{stq}}\bigg]^{-1}\bigg\}^{k_i}\\
=&\prod_{s<p<t}\zeta_{(m_s,m_p,m_t)}^{a_{spt}(\sum_{i=1}^n k_ic_{ip})}\bigg[\prod_{t<q<N}\zeta_{(m_s,m_t,m_q)}^{a_{stq}(\sum_{i=1}^n k_ic_{iq})}\bigg]^{-1}\\
=&1.
\end{split}
\end{equation*}
So we get $\zeta_{(m_r,m_s,m_t)}^{a_{rst}}=1$ which implies $a_{rst}=0$ since $0\leq a_{rst}<(m_r,m_s,m_t).$
\end{proof}

From (4.6), it is easy to see that
\begin{eqnarray}
&\Phi(e,f,g)=\Phi(e,g,f),\\
&\Phi(ef,g,h)=\Phi(e,g,h)\Phi(f,g,h)
\end{eqnarray}
for any $e,f,g,h\in G.$ Hence we have
\begin{equation}
\widetilde{\Phi}_{e}(f,g)=\frac{\Phi(e,f,g)\Phi(f,g,e)}{\Phi(f,e,g)}=\Phi(e,f,g).
\end{equation}

\subsection{Braided linear spaces}
The aim of this subsection is to give a classification of braided linear spaces in $_{\k G}^{\k G}\mathcal{Y}\mathcal{D}^{\Phi}.$ Without loss of the generality,
 we may assume that if a braided linear space $\mathcal{S}$ in $_{\k G}^{\k G}\mathcal{Y}\mathcal{D}^{\Phi}$ is generated by a set $\{X_1,\cdots,X_n\}$ of
 primitive elements
and the $G$-degree of $X_i$ is $g_i$ for $1\leq i\leq n,$ then $\langle g_i,\cdots,g_n\rangle=G.$ In fact, if $\langle g_i,\cdots,g_n\rangle\lneqq G,$
let $G'=\langle g_i,\cdots,g_n\rangle,$ then $\mathcal{S}$ is actually a braided linear space in $_{\k G'}^{\k G'}\mathcal{Y}\mathcal{D}^{\Phi'}$ such
 that $\langle g_i,\cdots,g_n\rangle=G',$ where $\Phi'=\Phi|_{G'}.$

\begin{lemma}
Keep the assumptions of the braided linear space $\mathcal{S}$ in $_{\k G}^{\k G}\mathcal{Y}\mathcal{D}^{\Phi}.$ Then $\k X_i$ is a $1$-dimensional
$(G,\widetilde{\Phi}_{g_i})$-representation.
\end{lemma}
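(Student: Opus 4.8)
The plan is to unpack what it means for $X_i$ to be a primitive element of the braided Hopf algebra $\mathcal{S}$ sitting inside $_{\k G}^{\k G}\mathcal{Y}\mathcal{D}^{\Phi}$, and to read off the required representation-theoretic statement from Proposition 2.4. Since $\mathcal{S}$ is $\mathbb{N}$-graded and $\mathcal{S}(1)=V=\oplus_i \k X_i$ is a subobject of $\mathcal{S}$ in the category, each line $\k X_i$ inherits the $G$-action and $G$-grading. By hypothesis $X_i$ is $G$-homogeneous of degree $g_i$, so $\k X_i \subseteq V_{g_i}$; in fact $\k X_i$ is itself a subobject, hence a $D^\Phi(G)$-submodule, so it is closed under the $G$-action. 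The point is then that $V_{g_i}$, and a fortiori the one-dimensional subspace $\k X_i$, is by Proposition 2.4 precisely a projective $G$-representation with respect to the $2$-cocycle $\widetilde{\Phi}_{g_i}$; that is, writing $f\triangleright X_i = \mu_i(f)\,X_i$ for scalars $\mu_i(f)\in\k^*$, the defining identity
\begin{equation*}
e\triangleright(f\triangleright X_i)=\widetilde{\Phi}_{g_i}(e,f)\,(ef)\triangleright X_i
\end{equation*}
translates into $\mu_i(e)\mu_i(f)=\widetilde{\Phi}_{g_i}(e,f)\,\mu_i(ef)$, which is exactly the assertion that $\k X_i$ is a $1$-dimensional $(G,\widetilde{\Phi}_{g_i})$-representation.

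Concretely, the steps I would carry out are: (i) recall from the definition of a braided linear space that $V=\mathcal{S}(1)$ is an object of $_{\k G}^{\k G}\mathcal{Y}\mathcal{D}^{\Phi}$ with structure restricted from $\mathcal{S}$, and that the $X_i$ span $V$; (ii) observe that since $X_i$ is $G$-homogeneous of degree $g_i$ and the $G$-action preserves the $G$-grading (the category $_{\k G}^{\k G}\mathcal{Y}\mathcal{D}^{\Phi}$ consists of $G$-graded spaces with a compatible action), the line $\k X_i$ is a $G$-stable subspace lying in $V_{g_i}$, hence itself an object of the category; (iii) apply Proposition 2.4 to the one-dimensional object $\k X_i$ of $G$-degree $g_i$ to conclude that it carries a projective $G$-representation with respect to $\widetilde{\Phi}_{g_i}$, i.e. is a $(G,\widetilde{\Phi}_{g_i})$-representation; (iv) note that one-dimensionality forces the action to be given by a single scalar function $\chi_i := \mu_i : G \to \k^*$, and unwind equation (2.8) into the identity $\chi_i(e)\chi_i(f)=\widetilde{\Phi}_{g_i}(e,f)\chi_i(ef)$ together with $\chi_i(1)=1$.

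The only genuinely substantive point — and hence the main obstacle, though it is a mild one — is justifying that $\k X_i$ really is a subobject of $V$ in $_{\k G}^{\k G}\mathcal{Y}\mathcal{D}^{\Phi}$, rather than merely a subspace: one must check that the $D^\Phi(G)$-action on $\mathcal{S}$ restricted to $V$ leaves each line $\k X_i$ invariant. This follows because the $\delta_h 1$ act as the projections onto the $G$-degree-$h$ components, so $\delta_h 1\cdot X_i = \delta_{h,g_i}X_i \in \k X_i$, while each $\delta_{g_i}f$ acts within $V_{g_i}$; since $X_i$ is a basis element spanning its own degree component $V_{g_i}\cap\mathcal{S}(1)$ of the decomposition $V=\oplus_j \k X_j$ (here using that distinct $X_j$'s spanning the same $g_i$-component would still be permuted, but a single basis line need not be preserved unless $\dim V_{g_i}=1$), one should more carefully argue via the grading: $V = \oplus_{g\in G} V_g$ with $V_g = \bigoplus_{j:\,g_j=g}\k X_j$, and the claim as stated is about $\k X_i$, so strictly one needs $\dim V_{g_i}=1$ or an eigenbasis argument. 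If the $X_i$ are merely chosen to be $G$-homogeneous and not jointly spanning a single line per degree, I would instead argue: the action of the abelian group $G$ via the cocycle $\widetilde{\Phi}_{g_i}$ on $V_{g_i}$ can, after the standard normalization trick (twisting by a function to make $\widetilde{\Phi}_{g_i}$ into an honest cohomologous cocycle, or using that over an algebraically closed field of characteristic zero projective representations of abelian groups decompose into one-dimensional isotypic pieces when the cocycle is symmetric), be simultaneously diagonalized, so one may — and the statement implicitly does — take the $X_i$ to be a common eigenbasis; then each $\k X_i$ is automatically $G$-stable and steps (iii)–(iv) apply verbatim.
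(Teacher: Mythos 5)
There is a genuine gap. The heart of this lemma is exactly the point you flag at the end --- why each individual line $\k X_i$ is stable under the $G$-action, given that several $X_j$ may share the same $G$-degree so that $\k X_i$ need not be a full component $V_{g_i}$ --- and your proposal never actually closes it. The paper's proof closes it by using the defining relations of the braided linear space, which your argument does not touch: applying $\Delta$ to $X_iX_j-q_{j,i}X_jX_i=0$ and expanding the product in $\mathcal{S}\otimes\mathcal{S}$ (whose braided multiplication brings in the action through the braiding $c(X_i\otimes X_j)=g_i\triangleright X_j\otimes X_i$) forces
\begin{equation*}
g_i\triangleright X_j=q_{j,i}X_j,\qquad q_{j,i}\,g_j\triangleright X_i=X_i \qquad (i\neq j),
\end{equation*}
so each line is automatically an eigenline for every $g_j$ with $j\neq i$; since $G=\langle g_1,\dots,g_n\rangle$, only the action of $g_i$ on $X_i$ itself remains, and that is settled by the two cases (either some $g_j=g_i$ with $j\neq i$, or $\k X_i=V_{g_i}$ and Proposition~2.4 applies directly). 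This computation is the actual content of the lemma; it is also what later identifies $q_{j,i}=\chi_j(g_i)$ in Theorem~4.9, so it cannot be bypassed.

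Your proposed repair --- simultaneously diagonalizing the $(G,\widetilde{\Phi}_{g_i})$-action on $V_{g_i}$ after noting the cocycle is symmetric, and replacing the $X_i$ by a common eigenbasis --- has two problems. First, it proves a statement about some rechosen basis rather than about the given generators $X_i$, whereas the lemma (and its use downstream) concerns the specific primitive generators satisfying $X_iX_j=q_{j,i}X_jX_i$. Second, the symmetry of $\widetilde{\Phi}_{g_i}$ is not available at this stage: in the paper's logical order, symmetry is a consequence of the existence of the quasi-characters $\chi_i$ (equivalently, of Proposition~4.3 restricting $\Phi$ to the form (4.6)), and the quasi-characters are produced from this very lemma via Theorem~4.9. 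Invoking it here is circular. The fix is simply to carry out step (ii) honestly by differentiating the quasi-commutation relations as above.
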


\begin{proof}
Let $V=\oplus_{1\leq i\leq n}\k X_i.$ As mentioned in Subsection 2.4, $V$ is an object in $_{\k G}^{\k G}\mathcal{Y}\mathcal{D}^{\Phi}.$
For $i\neq j,$ we have by the defining relations of $\mathcal{S}$
\begin{equation*}
\begin{split}
&\b(X_iX_j-q_{j,i}X_jX_i)\\
=&(X_iX_j-q_{j,i}X_jX_i)\otimes 1+(g_i\triangleright X_j-q_{j,i}X_j)\otimes X_i+(X_i-q_{j,i}g_j\triangleright X_i)\otimes X_j+1\otimes(X_iX_j-q_{j,i}X_jX_i)\\
=&0.
\end{split}
\end{equation*}
So we have \begin{equation}g_i\triangleright X_j=q_{j,i}X_j,\ \ q_{j,i}g_j\triangleright X_i= X_i. \end{equation}

It remains to consider $g_i\triangleright X_i.$ If there are $g_j=g_i$ for some $j \neq i,$ then we have $g_i\triangleright X_i=g_j\triangleright X_i=q_{i,j}X_i.$
Then the $G$-action on $\k X_i$ is stable and it is a $(G,\widetilde{\Phi}_{g_i})$-representation. Otherwise, $g_i\neq g_j$ for any $j \neq i,$ then
 it is obvious that $\k X_i=V_{g_i}$ and so $\k X_i$ is a $(G,\widetilde{\Phi}_{g_i})$-representation as well by Proposition 2.4.
\end{proof}

\begin{lemma}
Suppose that $\k X$ is a $(G,\widetilde{\Phi}_h)$-representation with action $g\triangleright X=q X.$ Then we have
\begin{equation}
g^k\triangleright X=\prod_{j=1}^{k-1}\Phi(h,g^j,g)^{-1}q^kX.
\end{equation}
\end{lemma}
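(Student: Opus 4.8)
The plan is to prove formula (4.13) by induction on $k$, using the defining property (2.8) of a $(G,\widetilde{\Phi}_h)$-representation together with the explicit description of $\widetilde{\Phi}_h$ that comes from the normalized form (4.6) of $\Phi$. Recall that by (4.12) we have the base case: for $k=1$ the empty product is $1$ and the formula reads $g\triangleright X = qX$, which is exactly the hypothesis.

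For the inductive step, suppose (4.13) holds for some $k\ge 1$. I would write $g^{k+1}\triangleright X = g^k\triangleright(g\triangleright X)$ — but actually it is cleaner to use associativity of the projective action in the other order, namely apply (2.8) with $e=g^k$, $f=g$, and the relevant group element being $h$ (since $X$ has $G$-degree $h$, so $\k X$ sits in $V_h$). This gives
\begin{equation*}
g^k\triangleright(g\triangleright X) = \widetilde{\Phi}_h(g^k,g)\,(g^{k+1})\triangleright X,
\end{equation*}
hence $g^{k+1}\triangleright X = \widetilde{\Phi}_h(g^k,g)^{-1}\,g^k\triangleright(g\triangleright X) = \widetilde{\Phi}_h(g^k,g)^{-1}\,q\,(g^k\triangleright X)$. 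Plugging in the induction hypothesis for $g^k\triangleright X$ yields $g^{k+1}\triangleright X = \widetilde{\Phi}_h(g^k,g)^{-1}q^{k+1}\prod_{j=1}^{k-1}\Phi(h,g^j,g)^{-1}X$. Comparing with the target formula for $k+1$, it remains to check that $\widetilde{\Phi}_h(g^k,g) = \Phi(h,g^k,g)$.

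This last identity is precisely formula (4.10): since $\Phi$ has the special form (4.6), we have $\widetilde{\Phi}_e(f,g)=\Phi(e,f,g)$ for all $e,f,g\in G$, so in particular $\widetilde{\Phi}_h(g^k,g)=\Phi(h,g^k,g)$. This closes the induction. I do not expect any real obstacle here; the only thing to be slightly careful about is the bookkeeping of which slot of $\widetilde{\Phi}$ and $\Phi$ receives which argument, and making sure the index range of the product shifts correctly from $j=1,\dots,k-1$ to $j=1,\dots,k$ when passing from $k$ to $k+1$. One should also note that the hypothesis of the lemma only asserts $\k X$ is a $(G,\widetilde{\Phi}_h)$-representation and fixes the action of the single generator $g$; the formula (4.13) then determines the action of every power of $g$, which is all that is needed for later computations with the relations $X_i^{\overrightarrow{N_i}}=0$.
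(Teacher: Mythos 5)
Your proof is correct and is exactly the argument the paper intends: the paper's own proof is the one-line ``obvious by induction on $k$,'' and your inductive step via $g^{k}\triangleright(g\triangleright X)=\widetilde{\Phi}_h(g^k,g)\,(g^{k+1})\triangleright X$ together with the identity $\widetilde{\Phi}_h(g^k,g)=\Phi(h,g^k,g)$ (valid because the whole subsection operates with $\Phi$ in the reduced form forced by Proposition 4.3) is precisely the computation being left to the reader. The only blemishes are cosmetic: your equation references are shifted (the identity $\widetilde{\Phi}_e(f,g)=\Phi(e,f,g)$ is the displayed formula at the end of Subsection 4.2, not ``(4.10)''), which does not affect the mathematics.
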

\begin{proof}
This is obvious by induction on $k.$
\end{proof}

\begin{corollary}
Keep the assumptions of the previous lemma. Then
\begin{equation}
q^{ln}=\prod_{j=1}^{ln-1}\Phi(h,g^j,g),
\end{equation}
where $n=|g|.$
\end{corollary}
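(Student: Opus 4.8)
The plan is to derive the identity by specializing Lemma 4.10 to an exponent that is a multiple of $n=|g|$, using that $g^{n}$ is the neutral element of $G$ and therefore acts as the identity on $\k X$.

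First I would record the only structural input needed. Since $\k X$ is a $(G,\widetilde{\Phi}_h)$-representation, the defining relation $e\triangleright(f\triangleright X)=\widetilde{\Phi}_h(e,f)\,(ef)\triangleright X$ together with the normalization of the $2$-cocycle $\widetilde{\Phi}_h$ forces $1\triangleright X=X$. Then, applying Lemma 4.10 with $k=ln$ for an arbitrary positive integer $l$, we obtain
\[
g^{ln}\triangleright X=\Bigl(\prod_{j=1}^{ln-1}\Phi(h,g^{j},g)^{-1}\Bigr)q^{ln}X .
\]
Because $n=|g|$, we have $g^{ln}=(g^{n})^{l}=1$, so the left-hand side equals $X$; since $X\neq 0$, cancelling $X$ and rearranging gives $q^{ln}=\prod_{j=1}^{ln-1}\Phi(h,g^{j},g)$, which is the assertion.

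I do not anticipate any real obstacle here: the entire computational content is already contained in Lemma 4.10, and the proof amounts to evaluating that formula at a multiple of the order of $g$. The only points that deserve a word of care are that the neutral element acts trivially on $\k X$ (which is exactly the normalization built into $\widetilde{\Phi}_h$) and that $X$ is nonzero, so that the scalar coefficients on the two sides may legitimately be compared.
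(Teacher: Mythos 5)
Your argument is correct and is exactly the intended one: the paper states this corollary without proof as an immediate consequence of the preceding lemma (your ``Lemma 4.10'' is the paper's Lemma 4.5) evaluated at $k=ln$, using $g^{ln}=1$, the triviality of the action of the neutral element, and $X\neq 0$. Nothing further is needed.
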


\begin{lemma}
Let $V\in _{\k G}^{\k G}\mathcal{Y}\mathcal{D}^{\Phi}$ where $\Phi$ is of the form of $\mathrm{(4.6)}.$ If $X \in V_g$ is nonzero satisfying
$g\triangleright X=qX,$ then we have in $T(V)$
\begin{equation}
\Delta(X^{\overrightarrow{m}})=\sum_{i=0}^{m}{m\choose i}_qX^{\overrightarrow{i}}\widetilde{\otimes }X^{\overrightarrow{m-i}}
\end{equation}
where $X^{\overrightarrow{i}}\widetilde{\otimes }X^{\overrightarrow{m-i}}=\prod_{j=1}^{m-1-i}\Phi(g^i,g^j,g)^{-1}
X^{\overrightarrow{i}}\otimes X^{\overrightarrow{m-i}}.$
\end{lemma}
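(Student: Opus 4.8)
The plan is to prove the formula by induction on $m$, treating the associator bookkeeping carefully since the tensor algebra $T(V)$ lives in $_{\k G}^{\k G}\mathcal{Y}\mathcal{D}^{\Phi}$ and its comultiplication is only coassociative up to $\Phi$. The base case $m=0$ (and $m=1$) is immediate from $\Delta(X)=X\otimes 1 + 1\otimes X$ and the convention that an empty product equals $1$. For the inductive step, I would write $X^{\overrightarrow{m}} = X^{\overrightarrow{m-1}}\cdot X$ and apply the fact that $\Delta$ is an algebra map in the braided category, so $\Delta(X^{\overrightarrow{m}}) = \Delta(X^{\overrightarrow{m-1}})\cdot\Delta(X)$ where the product on $T(V)\otimes T(V)$ is the braided tensor product of algebras from Section 2.1. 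Substituting the inductive hypothesis for $\Delta(X^{\overrightarrow{m-1}})$ and $\Delta(X) = X\otimes 1 + 1\otimes X$, and then multiplying out using formula (2.1) for $m_{A\otimes B}$ — which brings in the braiding $R(A\otimes B) = g\triangleright B\otimes A$ from Proposition 2.4 together with the associativity constraints — I expect to land on a recursion of the shape ${m\choose i}_q = {m-1\choose i-1}_q + q^{?}{m-1\choose i}_q$ on the coefficients, accompanied by matching associator factors on the $\widetilde{\otimes}$ terms.

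The key computational point is the middle cross-terms: when I expand $\big(X^{\overrightarrow{i-1}}\widetilde{\otimes}X^{\overrightarrow{m-i}}\big)(1\otimes X)$ and $\big(X^{\overrightarrow{i}}\widetilde{\otimes}X^{\overrightarrow{m-1-i}}\big)(X\otimes 1)$, both contribute to the $X^{\overrightarrow{i}}\widetilde{\otimes}X^{\overrightarrow{m-i}}$ component. The first term is easy — multiplying by $1\otimes X$ on the right only requires moving $X$ past nothing in the first tensor slot — but the second term requires braiding $X\otimes 1$ past $X^{\overrightarrow{m-1-i}}$ in the second slot, which by Lemma 4.7 (in the form $g^k\triangleright X = \prod_{j=1}^{k-1}\Phi(g,g^j,g)^{-1}q^k X$, here with $h = g$ so that $\widetilde{\Phi}_g(g^j,g) = \Phi(g,g^j,g)$ by (4.10)) produces the factor $q^{m-i}$ times a product of associator values $\Phi(g,g^j,g)^{-1}$. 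Collecting these, the $q$-power bookkeeping should reproduce the Pascal recursion for the Gaussian binomial ${m\choose i}_q$, and the associator factors should reassemble into $\prod_{j=1}^{m-1-i}\Phi(g^i,g^j,g)^{-1}$, using the multiplicativity relation (4.9), $\Phi(ef,g,h) = \Phi(e,g,h)\Phi(f,g,h)$, to combine the $\Phi(g,g^j,g)$'s into $\Phi(g^i,g^j,g)$.

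The main obstacle will be the third piece of associator data: formula (2.1) for the braided tensor product contains iterated associativity constraints $a_{A,A,B}^{-1}$, $a_{A,B,A}$, $a_{A\otimes B,A,B}^{-1}$, $a_{A\otimes A,B,B}$ whose scalar values on the relevant homogeneous components involve $\Phi$ evaluated at various triples of powers of $g$. Keeping precise track of these — and of the associativity constraint that defines the iterated product $X^{\overrightarrow{m}}$ itself, namely $(X^{\overrightarrow{i}}X^{\overrightarrow{j}}) = (\text{scalar})\, X^{\overrightarrow{i+j}}$ — is where the bulk of the care goes, and one has to make sure the definition of $X^{\overrightarrow{i}}\widetilde{\otimes}X^{\overrightarrow{m-i}}$ is precisely the one that makes everything telescope. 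Since $\Phi$ has the simple multiplicative shape (4.6)–(4.9), all these scalars factor through the exponents $\zeta$-nicely, so the identity reduces to an elementary exponent-counting verification; I would state this reduction and then carry out the (routine but bookkeeping-heavy) induction, citing (4.8)–(4.10) at each step where a $\Phi$-value is simplified.
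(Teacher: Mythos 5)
Your proposal follows essentially the same route as the paper's proof: induction on $m$ via $\Delta(X^{\overrightarrow{m}})=\Delta(X^{\overrightarrow{m-1}})\Delta(X)$ in the braided tensor product, with the nontrivial cross-term handled by the formula for $g^k\triangleright X$ (this is Lemma 4.5, not Lemma 4.7 as you cite --- 4.7 is the statement being proved) together with the relations (4.7)--(4.9) to telescope the associator factors into $\prod_{j}\Phi(g^i,g^j,g)^{-1}$, ending in the Pascal recursion for $\binom{m}{i}_q$, exactly as in the paper. The only blemish is a transposition in your displayed cross-terms: the two contributions to the $(i,m-i)$ component are $\bigl(X^{\overrightarrow{i-1}}\widetilde{\otimes}X^{\overrightarrow{m-i}}\bigr)(X\otimes 1)$ and $\bigl(X^{\overrightarrow{i}}\widetilde{\otimes}X^{\overrightarrow{m-1-i}}\bigr)(1\otimes X)$, not the pairing you wrote, though your stated braiding factor $q^{m-i}$ shows you intended the correct one.
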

\begin{proof}
We prove this lemma by induction on $m.$

If $m=1,$ the identity is obvious.

Suppose the identity is correct for $m,$ i.e.,
$\Delta(X^{\overrightarrow{m}})=\sum_{i=0}^{m}{m\choose i}_qX^{\overrightarrow{i}}\widetilde{\otimes }X^{\overrightarrow{m-i}}.$ Now consider
\begin{equation}
\b(X^{\overrightarrow{m+1}})=\b(X^{\overrightarrow{m}})\b(X)
=\sum_{i=0}^{m}{m\choose i}_q(X^{\overrightarrow{i}}\widetilde{\otimes }X^{\overrightarrow{m-i}})(X\otimes 1+1\otimes X).
\end{equation}

We have
\begin{equation*}
\begin{split}
{m\choose i}_q(X^{\overrightarrow{i}}\widetilde{\otimes }X^{\overrightarrow{m-i}})(1\otimes X)
=&{m\choose i}_q\prod_{j=1}^{m-1-i}\Phi(g^i,g^j,g)^{-1}
(X^{\overrightarrow{i}}\otimes X^{\overrightarrow{m-i}})(1\otimes X)\\
=&{m\choose i}_q\prod_{j=1}^{m-1-i}\Phi(g^i,g^j,g)^{-1}\Phi(g^{i},g^{(m-i)},g)^{-1}X^{\overrightarrow{i}}\otimes X^{\overrightarrow{m+1-i}}\\
=&{m\choose i}_q\prod_{j=1}^{m-i}\Phi(g^i,g^j,g)^{-1}X^{\overrightarrow{i}}\otimes X^{\overrightarrow{m+1-i}}\\
=&{m\choose i}_qX^{\overrightarrow{i}}\widetilde{\otimes }X^{\overrightarrow{m+1-i}}
\end{split}
\end{equation*}
and

\begin{equation*}
\begin{split}
&{m\choose i-1}_q(X^{\overrightarrow{i-1}}\widetilde{\otimes }X^{\overrightarrow{m+1-i}})(X\otimes 1)\\
=&{m\choose i-1}_q\prod_{j=1}^{m-i}\Phi(g^{i-1},g^j,g)^{-1}(X^{\overrightarrow{i-1}}\otimes X^{\overrightarrow{m+1-i}})(X\otimes 1) \\
=&{m\choose i-1}_q\prod_{j=1}^{m-i}\Phi(g^{i-1},g^j,g)^{-1}\frac{\Phi(g^{(i-1)},g^{(m+1-i)},g)}{\Phi(g^{(i-1)},g,g^{(m+1-i)})}
[X^{\overrightarrow{i-1}}(g^{(m+1-i)}\triangleright X)]\otimes X^{\overrightarrow{m+1-i}}\\
=&{m\choose i-1}_q\prod_{j=1}^{m-i}\Phi(g^{i-1},g^j,g)^{-1}\prod_{j=1}^{m-i}\Phi(g,g^j,g)^{-1}q^{(m+1-i)}
X^{\overrightarrow{i}}\otimes X^{\overrightarrow{m+1-i}}\\
=&{m\choose i-1}_q\prod_{j=1}^{m-i}\Phi(g^{i},g^j,g)^{-1}q^{(m+1-i)}
X^{\overrightarrow{i}}\otimes X^{\overrightarrow{m+1-i}}\\
=&{m\choose i-1}_qq^{m+1-i}X^{\overrightarrow{i}}\widetilde{\otimes }X^{\overrightarrow{m+1-i}}.
\end{split}
\end{equation*}
Note that the third equality follows from (4.7) and Lemma 4.5, and the fourth follows from (4.8).

By the previous two identities, we obtain
\begin{equation*}
{m\choose i}_q(X^{\overrightarrow{i}}\widetilde{\otimes }X^{\overrightarrow{m-i}})(1\otimes X)+{m\choose i-1}_q
(X^{\overrightarrow{i-1}}\widetilde{\otimes }X^{\overrightarrow{m+1-i}})(X\otimes 1)={m+1\choose i}_qX^{\overrightarrow{i}}\widetilde{\otimes}X^
{\overrightarrow{m+1-i}},
\end{equation*}
 hence
\begin{equation*}
\b(X^{\overrightarrow{m+1}})
=\sum_{i=0}^{m+1}{m+1 \choose i}_q(X^{\overrightarrow{i}}\widetilde{\otimes }X^{\overrightarrow{m+1-i}}).
\end{equation*}

The lemma is proved.
\end{proof}

\begin{corollary}
Suppose that $X$ is a primitive element of $\S(V)$ of $G$-degree $g$ such that $g\triangleright X=qX,$ then  $q\neq 1$ and
 the  nilpotent order of $X$ is $|q|,$ i.e., $|q|$ is the minimal positive integer $m$ such that $X^{\overrightarrow{m}}=0.$
\end{corollary}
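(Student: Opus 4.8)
The plan is to derive both claims from the coproduct formula of Lemma 4.7, transported along the canonical surjection of graded braided Hopf algebras $T(V)\twoheadrightarrow\S(V)$, together with the identification of $\S(V)$ with the Nichols algebra $\mathcal{B}(V)$ provided by Theorem 3.6 (applied to the bosonization $\S(V)\#\k G$). Let $N$ denote the nilpotent order of $X$, that is, the least positive integer with $X^{\overrightarrow{N}}=0$ in $\S(V)$; it exists because $\S(V)$ is a finite-dimensional $\mathbb{N}$-graded algebra with $\S(V)(0)=\k$ and $X$ is homogeneous of positive degree, and $N\geq 2$ since $X\neq 0$. Applying $\Delta$ to $X^{\overrightarrow{N}}=0$ and invoking Lemma 4.7 in $\S(V)$, we get
\[
0=\sum_{i=0}^{N}\binom{N}{i}_{q}\,X^{\overrightarrow{i}}\,\widetilde{\otimes}\,X^{\overrightarrow{N-i}}\qquad\text{in }\S(V)\otimes\S(V).
\]
The terms with $i=0$ and $i=N$ vanish since $X^{\overrightarrow{N}}=0$. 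For $1\leq i\leq N-1$ we have $X^{\overrightarrow{i}}\,\widetilde{\otimes}\,X^{\overrightarrow{N-i}}=\bigl(\prod_{j=1}^{N-1-i}\Phi(g^{i},g^{j},g)^{-1}\bigr)X^{\overrightarrow{i}}\otimes X^{\overrightarrow{N-i}}$ with an invertible scalar, and $X^{\overrightarrow{i}},X^{\overrightarrow{N-i}}$ are nonzero in $\S(V)$ because their degrees $i,N-i$ are $<N$; moreover $X^{\overrightarrow{i}}\otimes X^{\overrightarrow{N-i}}$ lies in the $\mathbb{N}$-bidegree $(i,N-i)$. These summands being linearly independent, I conclude
\[
\binom{N}{i}_{q}=0\qquad\text{for all }1\leq i\leq N-1.
\]

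Taking $i=1$ gives $(N)_{q}=1+q+\cdots+q^{N-1}=0$. If $q=1$ this would read $N=0$ in $\k$, impossible in characteristic zero since $N\geq 2$; hence $q\neq 1$. Then $(N)_{q}=\tfrac{q^{N}-1}{q-1}=0$ forces $q^{N}=1$, so $q$ is a root of unity and $d:=|q|$ divides $N$; in particular $|q|\leq N$.

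It remains to show $X^{\overrightarrow{|q|}}=0$, which yields $N\leq|q|$ and hence $N=|q|$. Since $q$ is a primitive $d$-th root of unity, expanding $\prod_{k=0}^{d-1}(1+q^{k}t)=\prod_{\zeta^{d}=1}(1+\zeta t)=1+(-1)^{d-1}t^{d}$ and comparing with the $q$-binomial theorem $\prod_{k=0}^{d-1}(1+q^{k}t)=\sum_{i=0}^{d}q^{\binom{i}{2}}\binom{d}{i}_{q}t^{i}$ gives $\binom{d}{i}_{q}=0$ for $1\leq i\leq d-1$. Plugging this into Lemma 4.7 (again in $\S(V)$) shows $\Delta(X^{\overrightarrow{d}})=X^{\overrightarrow{d}}\otimes 1+1\otimes X^{\overrightarrow{d}}$, so $X^{\overrightarrow{d}}$ is primitive. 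But $\S(V)=\mathcal{B}(V)$ is a Nichols algebra, whose primitives are concentrated in degree $1$: a nonzero homogeneous primitive of degree $\geq 2$ would generate a graded Hopf ideal, generated in degrees $\geq 2$ and strictly containing the defining ideal $I$ of $\mathcal{B}(V)$, contradicting the maximality in Definition 2.7. Since $d\geq 2$ and $X^{\overrightarrow{d}}$ is homogeneous of degree $d$, we get $X^{\overrightarrow{d}}=0$, as wanted.

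The only genuinely structural ingredient is the last one, that $\S(V)=\mathcal{B}(V)$ has no primitives in degree $\geq 2$, and I expect making that precise (or invoking it cleanly from Definition 2.7) to be the main point requiring care; everything else is bookkeeping with Gaussian binomials. One could instead bypass it: the vanishing $\binom{N}{i}_{q}=0$ for all $0<i<N$ already forces $q$ to be a primitive $N$-th root of unity via the $q$-analogue of Lucas's theorem (take $i=d$), but the Nichols-algebra route fits the framework above more naturally.
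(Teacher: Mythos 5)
Your proof is correct and follows essentially the same route as the paper's: both rest on Lemma 4.7, the bidegree decomposition of $\S(V)\otimes\S(V)$ to force the Gaussian binomials $\binom{N}{i}_q$ to vanish, and the fact that $\S(V)$ has no nonzero primitives in degree $\geq 2$. The only differences are ones of explicitness: the paper rules out $q=1$ by showing all powers $X^{\overrightarrow{m}}$ would then be nonzero (contradicting finite-dimensionality) rather than reading off $(N)_q=0$, and it asserts without justification that a primitive of degree $l>1$ must vanish in $\S(V)$, whereas you supply that justification via the identification $\S(V)\cong\mathcal{B}(V)$ from Theorem 3.6 and the maximality in Definition 2.7 --- a worthwhile addition, and a non-circular one, since that identification (Remark 4.10) does not depend on this corollary.
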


\begin{proof}
If $q=1,$ then by Lemma 4.7 we have in $T(V)$
\begin{equation*}
\Delta(X^{\overrightarrow{m}})=\sum_{i=0}^{m}{m\choose i}X^{\overrightarrow{i}}\widetilde{\otimes }X^{\overrightarrow{m-i}}.
\end{equation*}
This implies that in $\S(V)$ if  $X^{\overrightarrow{m}}\neq 0$ then $X^{\overrightarrow{m+1}}\neq 0.$ It follows that $\{ X^{\overrightarrow{m}} | m \neq 0 \}$
 is a linearly independent set of $\S(V).$ This is absurd as $\S(V)$ is assumed to be finite dimensional.

So $q \neq 1,$ and clearly $q$ is a root of a unity. Let $l=|q|.$ Then by (4.13) we can see that $l$ is the unique number greater than $1$ such that
\begin{equation*}
\Delta(X^{\overrightarrow{l}})=X^{\overrightarrow{l}}\otimes1 +1\otimes X^{\overrightarrow{l}},
\end{equation*}
i.e., $X^{\overrightarrow{l}}$ is a primitive element of $T(V)$ of degree $l > 1,$ hence must be $0$ in $\S(V).$
\end{proof}

Now we are ready to give one of the main results of this paper.

\begin{theorem}
The set of braided linear spaces of rank $n$ in $_{\k G}^{\k G}\mathcal{Y}\mathcal{D}^{\Phi}$ is in one-to-one correspondence with the set of admissible series of quasi-characters $\{\chi_1,\cdots,\chi_n\}$ of $G$ with respect to $\Phi.$ More precisely, if an admissible series of quasi-characters $\{\chi_1,\cdots,\chi_n\}$
of $G$ with respect to $\Phi$ is given, the corresponding braided linear space $\mathcal{S}$ in $_{\k G}^{\k G}\mathcal{Y}\mathcal{D}^{\Phi}$ can be presented by
\begin{eqnarray}
&X_i^{\overrightarrow{N_i}}=0\ where \ N_i=|\chi_i(g_i)|, \ 1 \le i \le n,\\
& X_iX_j=\chi_j(g_i)X_jX_i, \ 1\leq i\neq j\leq n
\end{eqnarray}
and the coalgebra structure is determined by $\b(X_i)=X_i\otimes 1+ 1\otimes X_i$ and $\varepsilon(X_i)=0$ for all $i.$
\end{theorem}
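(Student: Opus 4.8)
The plan is to construct the two mutually inverse assignments between braided linear spaces of rank $n$ in $_{\k G}^{\k G}\mathcal{Y}\mathcal{D}^{\Phi}$ and admissible series of quasi-characters, building on the coproduct formulas obtained above. By Proposition 4.3, if either side of the asserted correspondence is nonempty then $\Phi$ is cohomologous to a $3$-cocycle of the form (4.6); since replacing $\Phi$ by such a representative alters $_{\k G}^{\k G}\mathcal{Y}\mathcal{D}^{\Phi}$ only up to braided monoidal equivalence, we may assume throughout that $\Phi$ has the form (4.6), so that (4.7)--(4.9) hold and Lemma 4.7 is at our disposal (and there is nothing to prove otherwise). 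Now suppose $\mathcal{S}$ is a braided linear space of rank $n$ with primitive generators $\{X_1,\dots,X_n\}$ of $G$-degrees $g_1,\dots,g_n$; by the convention fixed at the beginning of this subsection, $G=\langle g_1,\dots,g_n\rangle$. By Lemma 4.4 each $\k X_i$ is a one-dimensional $(G,\widetilde{\Phi}_{g_i})$-representation, so the action has the shape $f\triangleright X_i=\chi_i(f)X_i$ for some $\chi_i\colon G\to\k^{*}$, and the identity (2.7) says precisely that $\chi_i$ is a quasi-character associated to $\widetilde{\Phi}_{g_i}$. From (4.10) one reads $q_{j,i}=\chi_j(g_i)$ and $\chi_i(g_j)=q_{j,i}^{-1}$, whence $\chi_i(g_j)\chi_j(g_i)=1$ for $i\neq j$; Corollary 4.8 gives $\chi_i(g_i)\neq 1$ (so $g_i\neq 1$) and identifies the nilpotency order of $X_i$ with $|\chi_i(g_i)|$. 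Hence $\{\chi_1,\dots,\chi_n\}$ is admissible with respect to $\Phi$ and $\mathcal{S}$ is presented by (4.15)--(4.16) with $N_i=|\chi_i(g_i)|$; since rescaling the generators changes neither the $q_{j,i}$ nor the $\chi_i$, the assignment $\mathcal{S}\mapsto\{\chi_1,\dots,\chi_n\}$ is well defined.

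For the reverse assignment, start from an admissible series $\{\chi_1,\dots,\chi_n\}$ with data $g_i$ and $N_i:=|\chi_i(g_i)|$. I would first realize $V:=\bigoplus_{i}\k X_i$ as an object of $_{\k G}^{\k G}\mathcal{Y}\mathcal{D}^{\Phi}$, placing $X_i$ in $G$-degree $g_i$ and declaring $f\triangleright X_i=\chi_i(f)X_i$; grouping the $X_i$ with a common value of $g_i$, each isotypic piece is a $(G,\widetilde{\Phi}_h)$-representation since $\chi_i$ is a quasi-character for $\widetilde{\Phi}_{g_i}$, so $V$ is an object by Proposition 2.4, with braiding $R(X_i\otimes X_j)=\chi_j(g_i)X_j\otimes X_i$. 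Next let $\mathcal{S}$ be the object of $_{\k G}^{\k G}\mathcal{Y}\mathcal{D}^{\Phi}$ presented by $X_1,\dots,X_n$ subject to (4.15)--(4.16), equipped with $\Delta(X_i)=X_i\otimes 1+1\otimes X_i$, $\varepsilon(X_i)=0$, $S(X_i)=-X_i$. That this is a well-defined braided Hopf algebra amounts to the two families of relation elements being primitive of $\mathbb{N}$-degree $\geq 2$ in $T(V)$, so that the ideal they generate is a graded Hopf ideal: for $X_iX_j-\chi_j(g_i)X_jX_i$ this is the computation of $\Delta(X_iX_j-q_{j,i}X_jX_i)$ carried out in the proof of Lemma 4.4, read in reverse, whose mixed-degree part vanishes exactly because $g_i\triangleright X_j=\chi_j(g_i)X_j$ and $g_j\triangleright X_i=\chi_i(g_j)X_i=\chi_j(g_i)^{-1}X_i$, which is admissibility; and for $X_i^{\overrightarrow{N_i}}$ one invokes Lemma 4.7 with $q=\chi_i(g_i)$, a primitive $N_i$-th root of unity, so that $\binom{N_i}{k}_{q}=0$ for $0<k<N_i$ and $\Delta(X_i^{\overrightarrow{N_i}})=X_i^{\overrightarrow{N_i}}\otimes 1+1\otimes X_i^{\overrightarrow{N_i}}$, with $N_i\geq 2$ since $\chi_i(g_i)\neq 1$ (this is the mechanism behind Corollary 4.8).

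It remains to see that $\mathcal{S}$ is genuinely a braided linear space and coincides with the Nichols algebra $\mathcal{B}(V)$. Using (4.15)--(4.16) to reorder and truncate monomials, $\mathcal{S}$ is spanned by $\{X_1^{\overrightarrow{a_1}}\cdots X_n^{\overrightarrow{a_n}}\mid 0\leq a_i<N_i\}$, hence finite dimensional; consequently the bosonization $\mathcal{S}\#\k G$ (Proposition 3.3) is a finite-dimensional graded pointed Majid algebra of the type studied in Section 3, with coinvariant subalgebra $\mathcal{S}$, so Theorem 3.6 identifies $\mathcal{S}$ with $\mathcal{B}(\mathcal{S}(1))=\mathcal{B}(V)$ and confirms it is a braided linear space. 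The one thing still to be established is that the spanning monomials above are linearly independent, i.e.\ $\dim_{\k}\mathcal{S}=\prod_{i}N_i$, so that the relations do not collapse $\mathcal{S}$ further and the presentation is exactly (4.15)--(4.16); this is the quasi-analogue of the PBW theorem for quantum linear spaces, and I would obtain it either by a Bergman-type diamond-lemma argument inside $_{\k G}^{\k G}\mathcal{Y}\mathcal{D}^{\Phi}$ — workable because $\Phi$ has the form (4.6), symmetric in its last two arguments and multiplicative in the first by (4.7)--(4.8), so that reordering introduces only controllable scalars — or by pairing $\mathcal{B}(V)$ against the Nichols algebra of the dual object. Finally the two assignments are inverse to each other: from a braided linear space the $\chi_i$ are read off the $G$-action on $\mathcal{S}(1)$, and by the first paragraph $q_{j,i}=\chi_j(g_i)$ and $N_i=|\chi_i(g_i)|$, so reconstruction gives back the same presentation; while the object built from an admissible series carries $f\triangleright X_i=\chi_i(f)X_i$ by construction, hence is sent back to the original series. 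I expect the dimension count for the presented $\mathcal{S}$ to be the main obstacle — it is the only step where the nontrivial associator must really be propagated through the computation — and Proposition 4.3 is precisely what reduces it to a tractable form.
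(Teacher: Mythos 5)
Your proposal is correct and follows essentially the same route as the paper: the forward direction rests on Lemma 4.4 (giving the quasi-characters and the relation $\chi_i(g_j)\chi_j(g_i)=1$) and Corollary 4.8 (giving $\chi_i(g_i)\neq 1$ and $N_i=|\chi_i(g_i)|$), while the converse is obtained by checking that the relation elements are primitive in $T(V)$ (via Lemma 4.7 for the powers), so that they generate a graded braided Hopf ideal and the quotient is the desired braided linear space; the identification with the Nichols algebra via bosonization and Theorem 3.6, which you build into the argument, is exactly what the paper relegates to Remark 4.10. The only divergence is the ``main obstacle'' you flag at the end: the PBW-type count $\dim_{\k}\mathcal{S}=\prod_i N_i$ is not actually needed, since once Theorem 3.6 identifies both an arbitrary braided linear space and the presented quotient $T(V)/I$ with $\mathcal{B}(V)$, the ideal $I$ is forced to coincide with the maximal graded Hopf ideal and the presentation claim and the bijection follow without any diamond-lemma or duality-pairing argument.
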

\begin{proof}
Suppose that $\S$ is a braided linear space of rank $n$ in $_{\k G}^{\k G}\mathcal{Y}\mathcal{D}^{\Phi}.$ Then by Lemma 4.4, it is not hard to show that  $\S(1)$
provides an admissible series of quasi-characters of $G$ with respect to $\Phi.$

Conversely, given an admissible series of quasi-characters $\{\chi_1,\cdots,\chi_n\}$ of $G$ with respect to $\Phi,$ then there are one-dimensional
$(G,\widetilde{\Phi}_{g_i})$-representations $\k X_i$ $(1\leq i\leq n).$ Let $V=\oplus_{1\leq i\leq n}\k X_i$ and by Proposition 2.4 $V$ becomes an object
 in $_{\k G}^{\k G}\mathcal{Y}\mathcal{D}^{\Phi}$ if the $G$-degree of $X_i$ is set to be $g_i.$ Consider the tensor algebra $T(V)$ in the category. As the
  elements in $$\{X_i^{N_i},X_iX_j-\chi_j(g_i)X_jX_i|N_i=|\chi_i(g_i)|,1\leq i\leq n\}$$ are primitive in $T(V),$ the ideal $I$ generated by this set is a
  braided Hopf ideal of $T(V)$ in the category $_{\k G}^{\k G}\mathcal{Y}\mathcal{D}^{\Phi}.$ Now the quotient braided Hopf algebra $\S(V)=T(V)/I$ is the desired braided linear space.
\end{proof}

\begin{remark}
Keep the notations and assumptions of the above theorem. Then $\S(V)$ is isomorphic to the Nichols algebra $\mathcal{B}(V)$ of $V$ in
$_{\k G}^{\k G}\mathcal{Y}\mathcal{D}^{\Phi}.$ This is clear by Proposition 3.3, Theorem 3.6 and the fact that $[\S(V) \# \k G]^{\operatorname{coinv} \k G}=\S(V).$
In other words, $\S(V)$ is a commutative Nichols algebra in the braided tensor category $_{\k G}^{\k G}\mathcal{Y}\mathcal{D}^{\Phi}.$
\end{remark}

\section{Finite quasi-quantum linear spaces}
In this section we give a classification of finite quasi-quantum linear spaces. As mentioned earlier in the introduction, this amounts to a classification of
finite-dimensional graded pointed Majid algebras generated by an abelian group and a set of skew-primitive elements which are mutually quasi-commutative.

Let $M$ be a finite-dimensional graded pointed Majid algebra generated by an abelian group $G$ and a set of quasi-commutative skew-primitive elements $\{ X_i | 1 \le i \le n \}.$ Assume further that $\Delta(X_i)=X_i\otimes 1+g_i\otimes X_i$ for $1\leq i\leq n$ and that $G=\langle g_1,\cdots,g_n\rangle$
throughout this section. Then $M_0=\k G$ and the associated coinvariant subalgebra $R$ is a braided linear space in $_{\k G}^{\k G}\mathcal{Y}\mathcal{D}^{\Phi}.$
In fact, if we let $V=\oplus_{1\leq i\leq n}\k X_i$ as in Section 4, then by Remark 4.10 $R \cong \S(V)$ as braided Hopf algebras in
$_{\k G}^{\k G}\mathcal{Y}\mathcal{D}^{\Phi}.$  We apply the results of Sections 3 and 4 to determine $M.$ First we provide the explicit set of admissible sets
of quasi-characters of $G$ with respect to a 3-cocycle $\Phi$ on $G,$ and then we carry out the bosonization procedure for the corresponding braided linear spaces
in the braided tensor category $_{\k G}^{\k G}\mathcal{Y}\mathcal{D}^{\Phi}.$

As in Section 4, we assume that $G=\Z_{m_1} \times \cdots \times \Z_{m_N}$ and $e_i$ is a fixed generator of $\Z_{m_i}$ for $1\leq i\leq N.$ We also keep the
notations of Section 4.

\subsection{Quasi-characters associated to $\widetilde{\Phi}_g$}
In view of Proposition 4.3, in this section we only need to consider 3-cocycle $\Phi$ of the following form
\[  \Phi(e_1^{i_1}\cdots e_N^{i_N},e_1^{j_1}\cdots e_N^{j_N},e_1^{k_1}\cdots e_N^{k_N}) =\prod_{l=1}^N\zeta_l^{a_li_l[\frac{j_l+k_l}{m_l}]}
\prod_{1\leq s<t\leq N}\zeta_{m_t}^{a_{st}i_t[\frac{j_s+k_s}{m_s}]}.  \] In this case, $\widetilde{\Phi}_g(e,f)=\Phi(g,e,f)$ and it is a symmetric 2-cocycle.
 Moreover, there are quasi-characters associated to $\widetilde{\Phi}_g$ for any $g\in G.$ The following lemma gives an explicit presentation of the
 quasi-characters associated to $\widetilde{\Phi}_g.$

\begin{lemma}
Let $\Phi$ be a 3-cocycle on $G$ of the form $(4.6).$ Then $\chi$ is a quasi-character of $G$ associated to $\widetilde{\Phi}_g$ for
$g=e_1^{i_1}\cdots e_N^{i_N}$ if and only if
\begin{equation}
\chi(e_l)=\big[\zeta_{m_l}^{a_li_l}\prod_{l<t\leq N}\zeta_{m_t}^{a_{lt}i_t}\big]^{\frac{1}{m_l}}
\end{equation}
for $1\leq l\leq N.$
\end{lemma}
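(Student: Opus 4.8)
The statement is an ``if and only if'' characterization of quasi-characters associated to the symmetric $2$-cocycle $\widetilde{\Phi}_g$ when $\Phi$ has the normal form $(4.6)$. Since the group $G=\Z_{m_1}\times\cdots\times\Z_{m_N}$ is generated by $e_1,\dots,e_N$, a quasi-character $\chi$ is determined by the scalars $\chi(e_1),\dots,\chi(e_N)$ together with the constraints coming from $(4.4)$, namely $\chi(f)\chi(h)=\widetilde{\Phi}_g(f,h)\chi(fh)$ and $\chi(1)=1$. So the plan is: first extract from $(4.4)$ the two types of constraints on the generators --- the ``diagonal'' relations forcing $\chi(e_l)^{m_l}$ to be a prescribed root of unity, and the ``commutation'' relations among distinct $e_l$, which are automatically satisfied because $\widetilde{\Phi}_g$ is symmetric --- and then verify that any assignment of $m_l$-th roots as in $(5.1)$ actually extends consistently to a well-defined function on all of $G$.

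\textbf{First step: compute $\widetilde{\Phi}_g$ explicitly.} Using $(4.9)$ in this normal-form situation we have $\widetilde{\Phi}_g(e,f)=\Phi(g,e,f)$. Plugging $g=e_1^{i_1}\cdots e_N^{i_N}$ and $e=e_1^{a_1}\cdots e_N^{a_N}$, $f=e_1^{b_1}\cdots e_N^{b_N}$ into $(4.6)$ gives
\[
\widetilde{\Phi}_g(e,f)=\prod_{l=1}^N\zeta_{m_l}^{a_l i_l[\frac{a_l+b_l}{m_l}]}\prod_{1\le s<t\le N}\zeta_{m_t}^{a_{st}\,i_t\,[\frac{a_s+b_s}{m_s}]}.
\]
The key observation is that this depends on $e,f$ only through the ``carries'' $[\frac{a_l+b_l}{m_l}]$, and in particular $\widetilde{\Phi}_g(e_l,e_l^{k})$ is nontrivial only when the exponents wrap around modulo $m_l$. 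A quasi-character $\chi$ associated to $\widetilde{\Phi}_g$ must then satisfy, for each fixed $l$,
\[
\chi(e_l)^{m_l}=\prod_{k=1}^{m_l-1}\widetilde{\Phi}_g(e_l,e_l^{k})\,\chi(1)=\zeta_{m_l}^{a_l i_l}\prod_{l<t\le N}\zeta_{m_t}^{a_{lt}i_t},
\]
which is exactly the $m_l$-th power of the right-hand side of $(5.1)$; here I would telescope the product $\chi(e_l)\chi(e_l)=\widetilde{\Phi}_g(e_l,e_l)\chi(e_l^2)$, $\chi(e_l)\chi(e_l^2)=\widetilde{\Phi}_g(e_l,e_l^2)\chi(e_l^3)$, and so on, using that $[\frac{a_l+b_l}{m_l}]=0$ until the sum reaches $m_l$. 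This proves the ``only if'' direction: $\chi(e_l)$ must be one of the indicated $m_l$-th roots.

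\textbf{Second step: well-definedness of the extension.} For the ``if'' direction, given any choice of $m_l$-th roots $\chi(e_l)$ as in $(5.1)$, define $\chi(e_1^{a_1}\cdots e_N^{a_N})=\prod_l \chi(e_l)^{a_l}$ for $0\le a_l<m_l$, and check that $\chi(f)\chi(h)=\widetilde{\Phi}_g(f,h)\chi(fh)$. Writing $f h = e_1^{c_1}\cdots e_N^{c_N}$ with $c_l=a_l+b_l-m_l[\frac{a_l+b_l}{m_l}]$, the discrepancy $\chi(f)\chi(h)/\chi(fh)$ equals $\prod_l \chi(e_l)^{m_l[\frac{a_l+b_l}{m_l}]}$. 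Substituting $\chi(e_l)^{m_l}=\zeta_{m_l}^{a_l i_l}\prod_{l<t}\zeta_{m_t}^{a_{lt}i_t}$ and comparing index by index with the explicit formula for $\widetilde{\Phi}_g(f,h)$ above, the two coincide --- this is the heart of the calculation. Crucially, because $\widetilde{\Phi}_g$ here is \emph{symmetric} (it factors through carries of $f$ only, but symmetry forces the cross terms to match; this was already noted in Subsection 5.1), the relations $\chi(e_s)\chi(e_t)=\widetilde{\Phi}_g(e_s,e_t)\chi(e_te_s)$ for $s\neq t$ impose no further restriction beyond $\widetilde{\Phi}_g(e_s,e_t)=\widetilde{\Phi}_g(e_t,e_s)$, which holds. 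I expect the main obstacle to be purely bookkeeping: keeping the double product $\prod_{s<t}$ indices straight and making sure the ``carry'' exponents $[\frac{a_s+b_s}{m_s}]$ land on the correct $\zeta_{m_t}$ factors, since the cocycle formula mixes the $l$-index of $\chi(e_l)^{m_l}$ with $t$-indices in the cross terms $\zeta_{m_t}^{a_{lt}i_t}$. Once the index matching is set up carefully, both directions follow from a single computation comparing $\prod_l\chi(e_l)^{m_l[\frac{a_l+b_l}{m_l}]}$ with $\widetilde{\Phi}_g(f,h)$.
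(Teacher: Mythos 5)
Your proposal is correct and follows essentially the same route as the paper: both directions reduce to the telescoped relation $\chi(e_l)^{m_l}=\prod_{k=1}^{m_l-1}\widetilde{\Phi}_g(e_l,e_l^k)=\zeta_{m_l}^{a_li_l}\prod_{l<t\leq N}\zeta_{m_t}^{a_{lt}i_t}$ (using that the only nontrivial carry occurs at $k=m_l-1$), and the converse is obtained by extending $\chi$ multiplicatively on the generators and matching the discrepancy $\prod_l\chi(e_l)^{m_l[\frac{a_l+b_l}{m_l}]}$ against the explicit formula for $\widetilde{\Phi}_g$ --- a verification the paper leaves as ``routine'' but which you spell out. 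The only blemish is the notational collision in your displayed formula for $\widetilde{\Phi}_g(e,f)$, where $a_l$ denotes both the cocycle datum and the exponent of $e_l$ in $e$; this should be disambiguated but does not affect the argument.
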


\begin{proof}
By (4.4), we have
\begin{equation*}
\chi(e_l^a)\chi(e_l^b)=\widetilde{\Phi}_g(e_l^a,e_l^b)\chi(e_l^{a+b})=\Phi(g,e_l^a,e_l^b)\chi(e_l^{a+b})=\zeta_{m_l}^{a_li_l[\frac{a+b}{m_l}]}
\prod_{l<t\leq N}\zeta_{m_t}^{a_{lt}i_t[\frac{a+b}{m_l}]}\chi(e_l^{a+b}).
\end{equation*}
From this identity it is easy to see that $\chi(e_i^a)=\chi(e_i)^a$ for $a<m_l$ and that
\begin{equation*}
\chi(e_l^{m_l})=\zeta_{m_l}^{-a_li_l}\prod_{l<t\leq N}\zeta_{m_t}^{-a_{lt}i_t}\chi(e_l^{m_l-1})\chi_(e_l)=
\zeta_{m_l}^{-a_li_l}\prod_{l<t\leq N}\zeta_{m_t}^{-a_{lt}i_t}\chi(e_l)^{m_l}=1,
\end{equation*}
then $\chi(e_l)=\big[\zeta_{m_l}^{a_li_l}\prod_{l<t\leq N}\zeta_{m_t}^{a_{lt}i_t}\big]^{\frac{1}{m_l}}$ for $1\leq l\leq N.$

Conversely, assume $\chi(e_l)=\big[\zeta_{m_l}^{a_li_l}\prod_{l<t\leq N}\zeta_{m_t}^{a_{lt}i_t}\big]^{\frac{1}{m_l}}$ for $1\leq l\leq N.$ Define
\begin{equation*}
\chi(e_1^{r_1}\cdots e_N^{s_N})=\chi(e_1)^{s_1}\cdots\chi(e_N)^{s_N}.
\end{equation*}
Then by a routine calculation one can verify that $\chi$ is a quasi-character associated to $\widetilde{\Phi}_g.$
\end{proof}

\subsection{Admissible series of quasi-characters}
Let $\{\chi_1,\cdots,\chi_n\}$ be an admissible series of quasi-characters of $G$ with respect to $\Phi$ where $\chi_i$ is a quasi-character associated to
 $\widetilde{\Phi}_{g_i}$ for $1\leq i\leq n.$ Suppose that $g_i=e_1^{\alpha_{i1}}\cdots e_N^{\alpha_{iN}}$ for $1\leq i\leq n.$ Then we get an $n \times N$
  matrix $(\alpha_{ij})$ with integer entries.

\begin{lemma}
Suppose $A=(\alpha_{ij})$ is an $n\times N$ matrix as above. Then there is an admissible series of quasi-characters $\{\chi_1,\cdots,\chi_n\}$ of $G$ with
respect to $\Phi$  with $\chi_i$ associated to $\widetilde{\Phi}_{g_i}$ if and only if
\begin{equation}
\prod_{l=1}^{N}\bigg[\zeta_{m_l}^{\frac{2a_l\alpha_{il}\alpha_{jl}}{m_l}}\prod_{l<t\leq N}
\zeta_{m_t}^{\frac{a_{lt}(\alpha_{it}\alpha_{jl}+\alpha_{jt}\alpha_{il})}{m_l}}\bigg]=1 , \ \ 1\leq i\neq j\leq n, \ \ \mathrm{and}
\end{equation}
\begin{equation}
\prod_{l=1}^{N}\bigg[\zeta_{m_l}^{\frac{a_l\alpha_{il}^2}{m_l}}\prod_{l<t\leq N}
\zeta_{m_t}^{\frac{a_{lt}\alpha_{it}\alpha_{il}}{m_l}}\bigg]\neq 1 , \ \ 1\leq i\leq n.
\end{equation}
\end{lemma}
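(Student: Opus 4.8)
The plan is to read both directions off the explicit description of quasi-characters provided by Lemma 5.1, so that the whole lemma becomes a translation exercise. Recall first that for a $3$-cocycle of the form (4.6) each $\widetilde{\Phi}_{g_i}$ is a symmetric $2$-cocycle (as noted at the start of Subsection 5.1), hence a quasi-character $\chi_i$ associated to $\widetilde{\Phi}_{g_i}$ exists, and by Lemma 5.1 it is completely determined by
\[ \chi_i(e_l)=\Big[\zeta_{m_l}^{a_l\alpha_{il}}\prod_{l<t\le N}\zeta_{m_t}^{a_{lt}\alpha_{it}}\Big]^{1/m_l},\qquad 1\le l\le N, \]
(a choice of $m_l$-th root being understood) together with the rule $\chi_i(e_1^{s_1}\cdots e_N^{s_N})=\prod_{l}\chi_i(e_l)^{s_l}$. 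By Definition 4.2 such a series is admissible exactly when $G=\langle g_1,\dots,g_n\rangle$, each $g_i\ne 1$, and $\chi_i(g_j)\chi_j(g_i)=1$, $\chi_i(g_i)\ne 1$ for all $1\le i,j\le n$. The first condition is the standing assumption of this section; the second follows from the last since $\chi_i(1)=1$; so the real task is to rewrite $\chi_i(g_j)\chi_j(g_i)=1$ and $\chi_i(g_i)\ne 1$ in terms of the matrix $A=(\alpha_{ij})$.

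To do this I would substitute the formula above, using $g_j=e_1^{\alpha_{j1}}\cdots e_N^{\alpha_{jN}}$, into $\chi_i(g_j)=\prod_{l=1}^N\chi_i(e_l)^{\alpha_{jl}}$, obtaining
\[ \chi_i(g_j)=\prod_{l=1}^N\zeta_{m_l}^{\frac{a_l\alpha_{il}\alpha_{jl}}{m_l}}\prod_{l<t\le N}\zeta_{m_t}^{\frac{a_{lt}\alpha_{it}\alpha_{jl}}{m_l}}. \]
Multiplying this by the expression obtained from it by exchanging $i$ and $j$, the $\zeta_{m_l}$-contributions of the $a_l$'s add up to the exponent $2a_l\alpha_{il}\alpha_{jl}/m_l$ and the $\zeta_{m_t}$-contributions of the $a_{lt}$'s add up to $a_{lt}(\alpha_{it}\alpha_{jl}+\alpha_{jt}\alpha_{il})/m_l$; hence the condition $\chi_i(g_j)\chi_j(g_i)=1$ becomes precisely (5.2). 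Setting $j=i$ in the displayed formula gives $\chi_i(g_i)=\prod_{l}\zeta_{m_l}^{a_l\alpha_{il}^2/m_l}\prod_{l<t}\zeta_{m_t}^{a_{lt}\alpha_{it}\alpha_{il}/m_l}$, so the condition $\chi_i(g_i)\ne 1$ becomes precisely (5.3).

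This settles both implications at once. For ``if'': assuming (5.2) and (5.3), fix once and for all a choice of the $m_l$-th roots occurring in Lemma 5.1; this yields quasi-characters $\chi_1,\dots,\chi_n$ associated to the $\widetilde{\Phi}_{g_i}$, and the computation above shows the resulting series is admissible. For ``only if'': any admissible series has each $\chi_i$ of the above form, and running the same computation produces (5.2) and (5.3). The one delicate point --- and the part I expect to require the most care --- is the bookkeeping of the $m_l$-th roots $\zeta_{m_l}^{1/m_l}$: a quasi-character associated to a given $\widetilde{\Phi}_{g_i}$ is not unique, so one must verify that, with a consistent choice of these roots, the products $\chi_i(g_j)\chi_j(g_i)$ and $\chi_i(g_i)$ really do collapse to the rational-exponent monomials displayed in (5.2) and (5.3); this is exactly what the exponent arithmetic above accomplishes. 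Everything else is a routine substitution.
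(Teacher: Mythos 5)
Your proposal is correct and follows essentially the same route as the paper: both directions reduce to substituting the explicit formula for $\chi_i(e_l)$ from Lemma 5.1 into $\chi_i(g_j)=\prod_l\chi_i(e_l)^{\alpha_{jl}}$ and translating the admissibility conditions $\chi_i(g_j)\chi_j(g_i)=1$ and $\chi_i(g_i)\neq 1$ into (5.2) and (5.3), with the converse obtained by defining the $\chi_i$ via that same formula. Your remark about fixing a consistent choice of the $m_l$-th roots is a point the paper passes over silently, but it does not change the argument.
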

\begin{proof}
Assume that $\{\chi_1,\cdots,\chi_n\}$ is an admissible series of quasi-characters of $G$ with respect to $\Phi$ with $\chi_i$ associated to
$\widetilde{\Phi}_{g_i}.$ Then by Lemma 5.1, we have
$$\chi_i(e_l)=\big[\zeta_{m_l}^{a_l\alpha_{il}}\prod_{l<t\leq N}\zeta_{m_t}^{a_{lt}\alpha_{it}}\big]^{\frac{1}{m_l}},$$
hence
\begin{equation*}
\chi_i(g_j)=\prod_{l=1}^{N}\bigg[\zeta_{m_l}^{\frac{a_l\alpha_{il}\alpha_{jl}}{m_l}}\prod_{l<t\leq N}
\zeta_{m_t}^{\frac{a_{lt}\alpha_{it}\alpha_{jl}}{m_l}}\bigg].
\end{equation*}
Now (5.2) follows from the condition $\chi_i(g_j)\chi_j(g_i)=1$ for $1\leq i\neq j\leq n,$ and (5.3) follows from the condition
$\chi_i(g_i)\neq 1$ for $1\leq i\leq n.$

Conversely, suppose (5.2) and (5.3) hold. Define quasi-characters $\chi_i$ associated to $\widetilde{\Phi}_{g_i}$ for $1\leq i\leq n$ by
$$\chi_i(e_l)=\big[\zeta_{m_l}^{a_l\alpha_{il}}\prod_{l<t\leq N}\zeta_{m_t}^{a_{lt}\alpha_{it}}\big]^{\frac{1}{m_l}}, \ \ 1\leq l\leq N.$$
By a direct verification, one can show that $\{\chi_1,\cdots,\chi_n\}$ is an admissible series of quasi-characters of $G$ with respect to $\Phi.$
\end{proof}

\subsection{Classification results}
By $\mathcal{A}(G,\Phi)$ we denote the set of those $n \times N$ integer matrices $(\alpha_{ij})$ satisfying the conditions given in Subsection 5.2. Finally
 we are ready to give the main classification result.

\begin{theorem}
Given an $n \times N$ matrix $(\alpha_{ij})$ in $\mathcal{A}(G,\Phi),$ we can define a finite-dimensional graded pointed Majid algebra $M$ generated by $G$
and a set
$\{X_1,\cdots,X_n\}$ of skew-primitive elements subject to relations
$$e_iX_j=\big[\zeta_{m_i}^{a_i\alpha_{ji}}\prod_{i<t\leq N}\zeta_{m_t}^{a_{it}\alpha_{jt}}\big]^{\frac{1}{m_i}}X_je_i,\ \ 1\leq i\leq N,1\leq j\leq n,$$
$$X_iX_j=\prod_{l=1}^{N}\bigg[\zeta_{m_l}^{\frac{a_l\alpha_{il}\alpha_{jl}}{m_l}}\prod_{l<t\leq N}
\zeta_{m_t}^{\frac{a_{lt}\alpha_{jt}\alpha_{il}}{m_l}}\bigg] X_jX_i,\ \ \  1\leq i\neq j\leq n, $$
$$ X_i^{\overrightarrow{N_i}}=0, \ \  N_i=\bigg|\prod_{l=1}^{N}\bigg[\zeta_{m_l}^{\frac{a_l\alpha_{il}^2}{m_l}}\prod_{l<t\leq N}
\zeta_{m_t}^{\frac{a_{lt}\alpha_{it}\alpha_{il}}{m_l}}\bigg]\bigg|,\ \ 1\leq i\leq n.$$
The coproduct of $M$ is determined by $\b(g)=g\otimes g$ for any $g\in G$ and  $\b(X_i)=X_i\otimes 1+g_i\otimes X_i$ where
$g_i=e_1^{\alpha_{i1}}\cdots e_N^{\alpha_{iN}}$ for $1\leq i\leq n.$ The associator is obtained by extending $\Phi$ as in Section 3. Conversely, any
 finite-dimensional graded pointed Majid algebra generated by $G$ and a set $\{X_1,\cdots,X_n\}$ of skew-primitive elements satisfying the quasi-commutative
  condition is twist equivalent to one of the Majid algebras defined above.
\end{theorem}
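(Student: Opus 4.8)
The plan is to realize $M$ as the bosonization $\S(V)\#\k G$ of a suitable braided linear space, to read its presentation off the cross-product formulae of Proposition 3.3, and conversely to recover the datum $(\alpha_{ij})$ from the coinvariant subalgebra $R=M^{\operatorname{coinv}\k G}$ via the machinery of Sections 3 and 4. For the construction: given $(\alpha_{ij})\in\mathcal{A}(G,\Phi)$, set $g_i=e_1^{\alpha_{i1}}\cdots e_N^{\alpha_{iN}}$. The inequalities (5.2) and (5.3) defining $\mathcal{A}(G,\Phi)$ are exactly the hypotheses of Lemma 5.2, so there is an admissible series of quasi-characters $\{\chi_1,\dots,\chi_n\}$ of $G$ with respect to $\Phi$, with $\chi_i$ associated to $\widetilde{\Phi}_{g_i}$ and given explicitly by (5.1). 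By Theorem 4.9 this datum produces a braided linear space $\S=\S(V)$ of rank $n$ in $_{\k G}^{\k G}\mathcal{Y}\mathcal{D}^{\Phi}$ on $V=\bigoplus_i\k X_i$, with $X_i$ of $G$-degree $g_i$, presented by (4.15) and (4.16); since $\S(V)$ is the Nichols algebra $\mathcal{B}(V)$ (Remark 4.10) and each $N_i=|\chi_i(g_i)|$ is finite, the ordered monomials $X_1^{\overrightarrow{i_1}}\cdots X_n^{\overrightarrow{i_n}}$ with $0\leq i_j<N_j$ form a basis, so $\dim\S(V)=\prod_iN_i$. I then set $M:=\S(V)\#\k G$ via Proposition 3.3: this is a graded Majid algebra, finite-dimensional of dimension $|G|\prod_iN_i$, pointed with coradical $M_0=\k G$, the grading being the coradical filtration because $\S(V)$ is coradically graded and generated in degree one, and the quasi-antipode is the one supplied by Proposition 3.3.

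To read off the presentation I identify $X_i$ with $X_i\otimes 1$ and $g\in G$ with $1\otimes g$; since $(X\otimes 1)(1\otimes g)=X\otimes g$ by (3.6), the set $G\cup\{X_1,\dots,X_n\}$ generates $M$. Feeding the relevant $G$-degrees into (3.6) and using that normalization of $\Phi$ forces each factor $\Phi(\cdot,1,\cdot)$, $\Phi(1,\cdot,\cdot)$, $\Phi(\cdot,\cdot,1)$ to be $1$, one gets $(1\otimes e_i)(X_j\otimes 1)=(e_i\triangleright X_j)\otimes e_i=\chi_j(e_i)(X_j\otimes e_i)$ and $(X_j\otimes 1)(1\otimes e_i)=X_j\otimes e_i$, hence $e_iX_j=\chi_j(e_i)X_je_i$; similarly $(X_i\otimes 1)(X_j\otimes 1)=X_iX_j\otimes 1$, so relation (4.16) of $\S(V)$ survives in $M$ as $X_iX_j=\chi_j(g_i)X_jX_i$, and $(X_i\otimes 1)^{\overrightarrow{N_i}}=X_i^{\overrightarrow{N_i}}\otimes 1=0$. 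Substituting the explicit values of $\chi_j(e_i)$ from (5.1) and of $\chi_j(g_i)$ and $N_i=|\chi_i(g_i)|$ computed inside the proof of Lemma 5.2 turns these into precisely the three families of relations in the statement. For the coalgebra structure, (3.7) together with $\Delta_{\S}(X_i)=X_i\otimes 1+1\otimes X_i$ and $\Phi(\cdot,1,\cdot)=1$ yields $\b(X_i\otimes 1)=(X_i\otimes 1)\otimes(1\otimes 1)+(1\otimes g_i)\otimes(X_i\otimes 1)$ and $\b(1\otimes g)=(1\otimes g)\otimes(1\otimes g)$, i.e.\ $\b(X_i)=X_i\otimes 1+g_i\otimes X_i$ and $\b(g)=g\otimes g$; and the associator of the graded Majid algebra $M=\S(V)\#\k G$ restricts to $\Phi$ on the coradical $\k G$ and vanishes off degree zero, so it is precisely $\Phi$ extended as in Section 3. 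Finally, the abstract Majid algebra presented by these generators and relations is spanned by $\{\,g\,X_1^{\overrightarrow{i_1}}\cdots X_n^{\overrightarrow{i_n}}\mid g\in G,\ 0\leq i_j<N_j\,\}$, so its dimension is at most $|G|\prod_iN_i=\dim M$; as it surjects onto $M$, the two coincide.

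For the converse, let $M$ be a finite-dimensional graded pointed Majid algebra generated by $G$ and quasi-commutative skew-primitives $\{X_1,\dots,X_n\}$ with $\b(X_i)=X_i\otimes 1+g_i\otimes X_i$ and $G=\langle g_1,\dots,g_n\rangle$; its coradical $M_0=\k G$ carries a normalized $3$-cocycle $\Phi_0$. By Proposition 3.2 and Theorem 3.6, $R=M^{\operatorname{coinv}\k G}$ is a Nichols algebra in $_{\k G}^{\k G}\mathcal{Y}\mathcal{D}^{\Phi_0}$ generated by $R(1)=\bigoplus_i\k X_i$, and since the $X_i$ are quasi-commutative it is the braided linear space $\S(V)$ with $V=\bigoplus_i\k X_i$ (Remark 4.10); by Proposition 3.4, $M\cong R\#\k G$. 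Theorem 4.9 attaches to $\S(V)$ an admissible series of quasi-characters of $G$ with respect to $\Phi_0$, so in particular the hypothesis of Proposition 4.3 holds and $\Phi_0$ is cohomologous to a $3$-cocycle $\Phi$ in the normal form (4.6). Replacing $\Phi_0$ by $\Phi$ is realized by twisting $M$ by the corresponding $2$-cochain on $G$; this twist lives in degree zero, so it preserves the grading, the pointedness and the shape $\b(X_i)=X_i\otimes 1+g_i\otimes X_i$, producing a twist-equivalent Majid algebra $M'$ with $M'_0=(\k G,\Phi)$. Running the construction backwards, the matrix $(\alpha_{ij})$ determined by $g_i=e_1^{\alpha_{i1}}\cdots e_N^{\alpha_{iN}}$ satisfies (5.2) and (5.3) by the ``only if'' part of Lemma 5.2, i.e.\ lies in $\mathcal{A}(G,\Phi)$, and $M'\cong\S(V)\#\k G$ is then one of the Majid algebras built in the first part (for the corresponding choice of the $m_i$-th roots in (5.1)). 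Hence $M$ is twist-equivalent to one of the listed algebras.

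The delicate point is the twist step in the converse: one must verify that replacing $\Phi_0$ by a cohomologous $\Phi$ in the normal form (4.6) is genuinely induced by a gauge transformation of the \emph{entire} Majid algebra $M$, and not merely of its coradical, in a way that keeps $M$ graded, pointed and generated by $G$ together with skew-primitives of the prescribed coalgebra shape; concretely, one must follow how a $2$-cochain twist of $\k G$ propagates through the bosonization $\S(V)\#\k G$. Everything else is the lengthy but routine bookkeeping of checking that the structure coefficients coming from (3.6) and (3.7) collapse to the scalars displayed in the statement, for which the only inputs are the normalization of $\Phi$ and the explicit formula (5.1) for the $\chi_i$.
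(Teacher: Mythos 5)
Your proposal is correct and follows exactly the route the paper intends: the paper's own proof of this theorem is the single sentence ``Direct consequence of Proposition 3.3 and Theorem 4.9,'' and your argument is precisely the detailed unwinding of that citation (Lemma 5.2 to get the admissible quasi-characters, Theorem 4.9 for the braided linear space, Proposition 3.3 for the bosonization and the structure constants, and Propositions 3.2, 3.4, 4.3 with Theorem 3.6 for the converse). The twist subtlety you flag at the end is real but is exactly what the paper's phrase ``twist equivalent'' is silently absorbing, so you are not deviating from its approach.
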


\begin{proof} Direct consequence of Proposition 3.3 and Theorem 4.9. \end{proof}

\subsection{Examples of pointed Majid algebras over $\Z_2\times \Z_2\times \Z_2$}
We conclude the paper by an explicit classification of the finite quasi-quantum linear spaces over the group $G=\Z_2\times \Z_2\times \Z_2.$ On the one hand, these examples are new and are very interesting in their own right. On the other hand, the method of computations here provides a typical model for any finite abelian group.

Let $M$ be a finite-dimensional graded pointed Majid algebra generated by $G$ and a set of quasi-commutative skew-primitive elements $\{ X_i \mid 1 \le i \le N \}.$ As before denote the degree of $X_i$ by $x_i.$ We also assume that the set $\{ x_i \mid 1\leq i\leq N \}$ generates the group $G.$ Hence the pointed Majid algebras $M$ is of rank $\ge 3,$ i.e., $N \geq 3.$ Besides, as $\langle x_1,\cdots,x_N \rangle=G,$ so three of them, say $x_1,x_2,x_3,$ generate $G.$ The following lemma is obvious.

\begin{lemma}
With the above assumptions, we have $G=\langle x_1\rangle \times \langle x_2 \rangle \times \langle x_3 \rangle.$
\end{lemma}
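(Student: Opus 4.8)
The plan is to reduce this lemma to an elementary fact of linear algebra over the field $\mathbb{F}_2$ with two elements. Since $G=\Z_2\times\Z_2\times\Z_2$, we view $G$ (writing its group operation additively) as a $3$-dimensional vector space over $\mathbb{F}_2$. By the standing hypotheses recalled just above, the set $\{x_1,\dots,x_N\}$ generates $G$, and $N\ge 3$ since $\Z_2\times\Z_2\times\Z_2$ cannot be generated by fewer than three elements; after reindexing we may therefore assume that $x_1,x_2,x_3$ already generate $G$. In vector-space language this says exactly that $x_1,x_2,x_3$ span the $3$-dimensional space $G$.

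First I would invoke the standard fact that a spanning set of a finite-dimensional vector space whose cardinality equals the dimension is automatically a basis. Applied to $x_1,x_2,x_3$, this gives that they are linearly independent over $\mathbb{F}_2$; in particular each $x_i$ is nonzero, i.e.\ $x_i\neq 1$ in $G$. Hence $G$ is the internal direct sum $\mathbb{F}_2x_1\oplus\mathbb{F}_2x_2\oplus\mathbb{F}_2x_3$. Since each line $\mathbb{F}_2x_i$ is precisely the cyclic subgroup $\langle x_i\rangle$, of order two, rewriting this decomposition in multiplicative notation yields $G=\langle x_1\rangle\times\langle x_2\rangle\times\langle x_3\rangle$, as asserted.

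There is essentially no obstacle in this argument, which is why the statement is flagged as obvious; the only point that is worth spelling out is why three of the generators suffice, namely that $\Z_2\times\Z_2\times\Z_2$ has rank three. One could alternatively observe first, via Corollary~4.8, that every $x_i$ is nontrivial before appealing to linear algebra, but this is unnecessary: a three-element spanning set of a three-dimensional space is automatically linearly independent.
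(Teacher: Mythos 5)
Your argument is correct. The paper gives no proof at all here (it simply declares the lemma obvious), and your reduction to the fact that a three-element spanning set of the three-dimensional $\mathbb{F}_2$-vector space $\Z_2\times\Z_2\times\Z_2$ is automatically a basis is exactly the intended justification, so there is nothing of substance to compare.
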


Then by Proposition 4.3, the $3$-cocycles on $G$ can be chosen in the following form
\begin{equation}
\Phi(x_1^{i_1}x_2^{i_2}x_3^{i_3},x_1^{j_1}x_2^{j_2}x_3^{j_3},x_1^{k_1}x_2^{k_2}x_3^{k_3})
=\prod_{1 \le l \le 3}{(-1)}^{a_li_l[\frac{j_l+k_l}{2}]}\prod_{1\leq s<t\leq 3}{(-1)}^{a_{st}i_t[\frac{j_s+k_s}2]},
\end{equation}
where $a_l, a_{st} \in \{0,1\}.$ It turns out that as the associator of $M$ the 3-cocycles $\Phi$ should be further restricted as follows.

\begin{proposition}
Keep the above assumptions on $M.$ Then its associator $\Phi$ must be of the following form \begin{equation}\Phi(x_1^{i_1}x_2^{i_2}x_3^{i_3},x_1^{j_1}x_2^{j_2}x_3^{j_3},x_1^{k_1}x_2^{k_2}x_3^{k_3})
=\prod_{l=1}^3{(-1)}^{a_li_l[\frac{j_l+k_l}{2}]} \end{equation} with each $a_l\in \{0,1\}.$
\end{proposition}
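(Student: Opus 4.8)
The plan is to use the classification machinery of Sections~3--5 in the reverse direction: the mere \emph{existence} of $M$ forces the existence of suitable quasi-characters on $G,$ and their defining relations together with the admissibility conditions will pin down the associator.

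As recalled at the beginning of this section (via Theorem~3.6 and Remark~4.10), the coinvariant subalgebra $R=M^{\operatorname{coinv}\k G}$ is isomorphic, as a braided Hopf algebra in $_{\k G}^{\k G}\mathcal{Y}\mathcal{D}^{\Phi},$ to the braided linear space $\mathcal{S}(V)$ of rank $N$ on $V=\oplus_{1\le i\le N}\k X_i,$ where the $G$-degree of $X_i$ is $x_i$ and $\langle x_1,\dots,x_N\rangle=G.$ Hence Theorem~4.9 applies and yields an admissible series of quasi-characters $\{\chi_1,\dots,\chi_N\}$ of $G$ with respect to $\Phi$ such that $\chi_i$ is associated to $\widetilde{\Phi}_{x_i};$ in particular
\begin{equation*}
\chi_i(x_j)\chi_j(x_i)=1 \quad\text{and}\quad \chi_i(x_i)\ne 1, \qquad 1\le i,j\le N.
\end{equation*}
By Lemma~5.4 the three elements $x_1,x_2,x_3$ satisfy $G=\langle x_1\rangle\times\langle x_2\rangle\times\langle x_3\rangle,$ so I would take them as the fixed generators $e_l:=x_l$ $(l=1,2,3);$ accordingly $\chi_1,\chi_2,\chi_3$ are the quasi-characters associated to $\widetilde{\Phi}_{e_1},\widetilde{\Phi}_{e_2},\widetilde{\Phi}_{e_3},$ respectively.

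The remaining computation is short. First I would write $\chi_1,\chi_2,\chi_3$ out explicitly from Lemma~5.1 applied to $g=e_i,$ whose exponent vector is the $i$-th unit vector, so that only one or two of the factors survive. Evaluating $\chi_i(x_j)$ and $\chi_j(x_i)$ for a pair $i<j$ in $\{1,2,3\},$ every ``diagonal'' contribution $(-1)^{a_l}$ disappears because $x_i$ and $x_j$ have disjoint supports, and a direct computation leaves
\begin{equation*}
\chi_i(x_j)\chi_j(x_i)=(-1)^{a_{ij}/2},
\end{equation*}
a fourth root of unity that equals $1$ only when $a_{ij}=0.$ Imposing the admissibility relation $\chi_i(x_j)\chi_j(x_i)=1$ therefore forces $a_{12}=a_{13}=a_{23}=0,$ and substituting this back into (5.4) gives exactly the form (5.5). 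Alternatively, one may feed the $N\times 3$ exponent matrix $(\alpha_{il})$ --- whose top $3\times 3$ block is the identity --- into the numerical criterion (5.2) of Lemma~5.2; for the index pairs $(1,2),(1,3),(2,3)$ that criterion collapses to $(-1)^{a_{ij}/2}=1$ and again yields $a_{ij}=0.$

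The only delicate point is the reduction carried out in the second paragraph: one has to make sure that $R$ genuinely satisfies the hypotheses of Theorem~4.9 --- that it is a braided linear space whose generating $G$-degrees span all of $G$ --- which is precisely where the standing assumption $G=\langle x_1,\dots,x_N\rangle$ together with Lemma~5.4 is used, so that an admissible series of quasi-characters is available for the chosen generators $x_1,x_2,x_3.$ Once that is secured, everything reduces to bookkeeping with the explicit $3$-cocycles (5.4) and the formula of Lemma~5.1, and since $G=\Z_2\times\Z_2\times\Z_2$ it amounts to tracking signs and the single square root $(-1)^{1/2};$ no serious obstacle remains.
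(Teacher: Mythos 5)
Your proposal is correct and follows essentially the same route as the paper: both invoke Theorem 3.6 and Theorem 4.9 to produce an admissible series of quasi-characters with $\chi_i$ associated to $\widetilde{\Phi}_{x_i},$ and then use the admissibility condition $\chi_s(x_t)\chi_t(x_s)=1$ (equivalently, condition (5.2) of Lemma 5.2 with the identity block $\alpha_{ij}=\delta_{ij}$) to force $(-1)^{a_{st}/2}=1$ and hence $a_{st}=0.$ The only cosmetic difference is that you carry out the evaluation explicitly via Lemma 5.1 whereas the paper cites (5.2) directly; the underlying computation is identical.
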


\begin{proof}
Let $V=\k\{X_1,\cdots,X_N\},$ then $V\in\ _{\k G}^{\k G}\mathcal{Y}\mathcal{D}^{\Phi}$ and $M=B(V)\#\k G$ by Theorem 3.6. According to Theorem 4.9, there is an admissible series of quasi-characters $\{\chi_1,\cdots,\chi_N\}$ with respect to $\Phi$ which corresponds to $B(V),$ and each
 $\chi_i$ is associated to $\Phi_{x_i}$ for all $1\leq i\leq N.$ Then by (5.2), we get $(-1)^{\frac{a_{st}}{2}}=1$ for all $1 \leq s< t\leq 3.$ Note that here we have used the facts
 $\alpha_{ij}=\delta_{ij}$ for all $1\leq i,j\leq 3.$ So we get $a_{st}=0$ for all $1\leq s < t\leq 3$ and the claim follows.
\end{proof}

Now there are possibly seven nontrivial associators for finite quasi-quantum linear spaces over $G.$ For each case we can compute the admissible series of quasi-characters in the same manner.
In the following, we will take $$\Phi(x_1^{i_1}x_2^{i_2}x_3^{i_3},x_1^{j_1}x_2^{j_2}x_3^{j_3},x_1^{k_1}x_2^{k_2}x_3^{k_3})
=\prod_{l=1}^3{(-1)}^{i_l[\frac{j_l+k_l}{2}]}$$ for example. In other words, we fix the $\Phi$ as in (5.5) with $a_1=a_2=a_3=1.$ By Lemma 5.1, we have
\begin{equation}
\chi_1(x_1)=\pm \mathrm{i},\ \  \chi_1(x_2)=\pm 1,\ \  \chi_1(x_3)=\pm 1.
\end{equation}
Here and below, $\mathrm{i}$ stands for $\sqrt{-1}.$ Similarly we have
\begin{eqnarray}
&\chi_2(x_1)=\pm 1,\ \  \chi_2(x_2)=\pm \mathrm{i},\ \  \chi_2(x_3)=\pm 1;\\
&\chi_3(x_1)=\pm 1,\ \  \chi_3(x_2)=\pm 1,\ \  \chi_3(x_3)=\pm \mathrm{i}.
\end{eqnarray}
Also by taking (4.5) into consideration, we obtain
\begin{equation}
\chi_1(x_2)=\chi_2(x_1)=\pm 1,\ \chi_1(x_3)=\chi_3(x_1)=\pm 1,\ \chi_2(x_3)=\chi_3(x_2)=\pm 1.
\end{equation}
From (5.6-5.9), we can easily get all cases of rank $3$ admissible quasi-characters $\{\chi_1,\chi_2,\chi_3\}$ of $G$ with respect to the given $\Phi.$
If $N>3,$ then according to Definition 4.2 we have
\begin{equation}
\chi_j(x_i)=\chi_i^{-1}(x_j)
\end{equation} for all $1\leq i\leq 3$ and $j>3.$ As $\chi_1,\chi_2,\chi_3$ are fixed, thus any other character $\chi_j \ (j>3)$ which is compatible with $\{\chi_1, \chi_2, \chi_3\},$ i.e., the conditions of Definition 4.2 hold, is uniquely determined by $x_j$ due to (5.10) by noting that $x_1,x_2,x_3$ generate $G$ and that $\chi_j$ is multiplicative. Since $x_i\neq 1,$ so there are possibly at most
$7$ other classes of quasi-characters than $\chi_1,\chi_2,\chi_3$ which obviously correspond to the set of non-identity elements of $G.$ For the convenience of exposition, we make a convention for the notations of the quasi-characters $\chi_j \ (j \ge 4).$ Let $\chi'_i$ denote the quasi-character in the series $\{\chi_j\}_{j \ge 4}$ corresponding to $x_i$ for $1\leq i\leq 3,$ $\chi_{ij}$ the quasi-character corresponding to $x_ix_j$ for $1\leq i<j\leq 3,$
and $\chi_{123}$ the quasi-character corresponding to $x_1x_2x_3.$ With the above notations and assumptions, we have

\begin{proposition}
Any admissible series of quasi-characters of $G$ with respect to the fixed $\Phi$ must be one of the following:
\begin{itemize}
\item[(1)] $\{\chi_1,\cdots,\chi_N\}$ with $3\leq N\leq 6,$ $\chi_4,\cdots,\chi_N$ are distinct, and for each $j \ge 4, \ \chi_j \in \{\chi'_1,\chi'_2,\chi'_3\};$
\item[(2)] $\{\chi_1,\cdots,\chi_N\}$ with $N\geq 5,$ $\chi_4=\chi'_k$ for some $1\leq k\leq 3,$  $\chi_i(x_i)=\chi_j(x_j)$ for $\{i,j\}=\{1,2,3\} \setminus \{k\},$ and $\chi_5=\cdots=\chi_N=\chi_{ij};$
 \item[(3)] $\{\chi_1,\cdots,\chi_N\}$ with $N\geq 4,$ $\chi_4=\cdots=\chi_N =\chi_{ij}$ for some $1\leq i<j\leq 3,$ and $\chi_i(x_i)=\chi_j(x_j);$
\item[(4)] $\{\chi_1,\cdots,\chi_N\}$ with $N\geq 4,$ $\chi_4=\chi_{123},$  $\chi_5=\cdots=\chi_N=\chi_{ij}$ for some $1\leq i<j\leq 3,$ and $\chi_i(x_i)=\chi_j(x_j).$

\end{itemize}
 \end{proposition}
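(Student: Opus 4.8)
The plan rests on a reduction that is already half-made in the discussion preceding the statement: for $j\ge 4$ the quasi-character $\chi_j$ is completely pinned down by the $G$-degree $x_j$ of $X_j$ --- it is the unique quasi-character associated to $\widetilde{\Phi}_{x_j}$ with $\chi_j(x_i)=\chi_i(x_j)^{-1}$ for $i=1,2,3$ --- so $\chi_j$ is one of the seven candidates $\chi_1',\chi_2',\chi_3',\chi_{12},\chi_{13},\chi_{23},\chi_{123}$ labelled by the nonidentity elements of $G$, and $x_j$ is itself one of those seven elements (it cannot be $1$, since then $\chi_j(x_j)=1$ would violate Definition~4.2). That the prescribed values really define a quasi-character associated to $\widetilde{\Phi}_{x_j}$ is a one-line check against Lemma~5.1, using $\chi_i(x_i)^2=-1$ and $\chi_i(x_k)^2=1$ for $k\ne i$. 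So, after a harmless relabelling making the first three degrees the basis $x_1,x_2,x_3$, the classification reduces to three questions about the fixed core $\{\chi_1,\chi_2,\chi_3\}$: which of the seven candidates $\psi$ may occur at all, i.e.\ satisfy $\psi(g)\ne1$ for $g$ its degree; which may be repeated; and which pairs may occur together.

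I would settle all three from a minimal set of inputs: $\chi_l(x_l)^2=-1$ and $\chi_l(x_m)^2=1$ for $1\le l\ne m\le 3$ (from (5.6)--(5.8)), the relation $\chi_l(x_m)\chi_m(x_l)=1$ for $1\le l\ne m\le 3$ (formula~(5.9)), and multiplicativity of a quasi-character on the basis $x_1,x_2,x_3$. Writing a candidate $\psi$ of degree $g=x_1^{s_1}x_2^{s_2}x_3^{s_3}$ with $s_l\in\{0,1\}$ and $w(g)=s_1+s_2+s_3$, a short computation gives \[ \psi(g)=\prod_{l\,:\,s_l=1}\chi_l(x_l)^{-1}, \] a product of $w(g)$ factors each equal to $\pm\mathrm{i}$, and for two candidates $\psi,\psi'$ of degrees $g$ and $h$ \[ \psi(h)\,\psi'(g)=(-1)^{\langle g,h\rangle}, \] where $\langle\,,\,\rangle$ is the standard symmetric bilinear form on $\mathbb{F}_2^3$ under the identification of a degree with its exponent vector; in both computations the off-diagonal factors $\chi_l(x_m)$ ($l\ne m$) cancel in pairs by~(5.9), while each coincidence $l=m$ contributes a single factor $-1$. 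Reading off admissibility (Definition~4.2): $\psi$ may occur iff $\psi(g)\ne1$, which always holds for $w(g)$ odd and, for $g=x_ix_j$, holds precisely when $\chi_i(x_i)=\chi_j(x_j)$ (the other sign gives $\psi(g)=1$); $\psi$ may be repeated iff moreover $\psi(g)^2=1$, i.e.\ iff $w(g)$ is even --- so only the $\chi_{ij}$, and then with arbitrary multiplicity; and two distinct candidates of degrees $g\ne h$ may coexist iff $\langle g,h\rangle=0$.

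It then remains to run the finite combinatorics of orthogonality on $\mathbb{F}_2^3\setminus\{0\}$: the three weight-one vectors are pairwise orthogonal; a weight-two vector is orthogonal to exactly the complementary weight-one vector and to the weight-three vector, while distinct weight-two vectors are not orthogonal; the weight-three vector is orthogonal to no weight-one vector. Superimposing the occurrence and repetition rules, the multiset of degrees realised by $\{X_j\}_{j\ge4}$ must be exactly one of: a set of distinct weight-one vectors (hence at most three of them); one weight-one vector $x_k$ (occurring once) together with one or more copies of the complementary $x_ix_j$, forcing $\chi_i(x_i)=\chi_j(x_j)$; one or more copies of a single $x_ix_j$ and nothing else, again forcing $\chi_i(x_i)=\chi_j(x_j)$; or one copy of $x_1x_2x_3$ together with zero or more copies of a single $x_ix_j$ (the latter forcing $\chi_i(x_i)=\chi_j(x_j)$). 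Translating through $\chi_k'\leftrightarrow x_k$, $\chi_{ij}\leftrightarrow x_ix_j$, $\chi_{123}\leftrightarrow x_1x_2x_3$ and ordering $X_4,\dots,X_N$ in the obvious way gives precisely the four families (1)--(4) with their stated bounds on $N$; the converse, that each such configuration does yield an admissible series, is immediate from the two displays above. I expect the only genuine care to be needed in checking that the orthogonality case-split is exhaustive and in handling the boundary values $N=3,4$, where some of the ``$\chi_5=\cdots=\chi_N$'' clauses are vacuous; the rest is bookkeeping with the two relations above.
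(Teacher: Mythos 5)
Your proposal is correct and arrives at the same list, but it is organized quite differently from the paper's proof. The paper argues by a three-way case division according to which types of degrees occur among $x_4,\dots,x_N$ (a single generator $x_i$, a product $x_ix_j$, or $x_1x_2x_3$), and settles each compatibility or exclusion inside each case by a separate hands-on computation with the explicit values $\pm1,\pm\mathrm{i}$ from (5.6)--(5.10); for example, the impossibility of two distinct weight-two degrees is shown directly via $\chi_m(x_n)\chi_n(x_m)=\chi_m(x_sx_t)\chi_n(x_kx_l)\neq1$. You instead condense all of these checks into the two identities $\psi(g)=\prod_{l\in\operatorname{supp}(g)}\chi_l(x_l)^{-1}$ and $\psi(h)\psi'(g)=(-1)^{\langle g,h\rangle}$ --- both of which I have verified: the off-diagonal factors $\chi_l(x_m)$, $l\neq m$, cancel in pairs by (5.9) and only the diagonal ones survive --- after which the classification is literally the enumeration of pairwise-orthogonal multisets of nonzero vectors in $\mathbb{F}_2^3$, decorated with the occurrence condition $\chi_i(x_i)=\chi_j(x_j)$ for weight-two degrees. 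Your route buys a uniform explanation of why the final list has the shape it does, and would scale with little change to $\Z_2^N$; its only cost is the front-loaded verification of the two identities. One cosmetic slip: in the first computation the diagonal coincidence $l=m$ contributes $\chi_l(x_l)^{-1}=\mp\mathrm{i}$ rather than a factor $-1$; the factor $-1$ belongs only to the second, paired, computation. Since your displayed formulas are themselves correct, nothing downstream is affected.
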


\begin{proof}
The proof is divided into three cases. Note that all the rank $3$ admissible series of quasi-characters are contained in (1), so in (2-4) the ranks are assumed $>3.$

\emph{Case 1.} If there is a $\chi_j \ (j\geq 4)$ corresponding to $x_i$ for some $1\leq i\leq 3,$ then by (5.10) it follows that $\chi_j(x_i)=-\chi_i(x_i)$ and that $\chi_j(x_k)=\chi_i(x_k)$ for $1\leq k\neq i\leq 3.$

If there is another $\chi_k \ (k\geq 4)$ corresponding to $x_l$ for some $1\leq l\leq 3,$ then we have $l\neq i$ since otherwise $\chi_k(x_i)=-\chi_i(x_i)=-\chi_j(x_i),$ which is absurd. We also claim that there is no $\chi_r \ (r \ge 4)$ corresponding to $x_sx_t$ for some $1\leq s<t\leq 3.$ Otherwise, one of $\{i,l\},$ say $i,$ should be $s \ \mathrm{or}\ t.$ Then by (4.5) we have $\chi_r(x_i)\chi_i(x_sx_t)=1=\chi_r(x_i)\chi_j(x_sx_t),$ which clearly contradicts with the previous fact $\chi_i(x_sx_t)=-\chi_j(x_sx_t).$ Similarly, there is no quasi-character corresponding to $x_1x_2x_3.$ Therefore, all the possible admissible series of quasi-characters discussed above fall into (1) of our list. On the other hand, the admissibility of the series in (1) can be easily verified.

Next we assume other than $\chi_j,$ there is no other $\chi_k \ (k \geq 4)$ corresponding to $x_l$ for any $1\leq l\leq 3.$ Suppose some $\chi_r \ (r \ge 4)$ is corresponding to $x_sx_t$ for certain $1\leq s<t\leq 3.$ By the above discussion, we have $s \neq i \neq t.$ The condition $1 \ne \chi_r(x_sx_t)$ together with (5.6-5.10) implies that $\chi_s(x_s)=\chi_t(x_t).$ Then such series fall into (2) of the list. As for their admissibility, we have by direct computation that $\chi_r(x_i)\chi_j(x_sx_t)=1,$ and that $\chi_r(x_sx_t)=\chi_s(x_s)\chi_t(x_t)=-1$ since $\chi_s(x_s)=\chi_t(x_t)=\pm \mathrm{i}.$

\emph{Case 2.} Assume that there is no $\chi_j \ (j \geq 4)$ corresponding to $x_i$ for $1\leq i\leq 3$ or to $x_1x_2x_3.$

Since $N\geq 4,$ so there is some $\chi_m$ corresponding to $x_kx_l$ for certain $1\leq k<l\leq 3.$ We claim that there is no other $\chi_n \ (n \ge 4)$ corresponding to
$x_sx_t$ with $x_kx_l\neq x_sx_t.$ Otherwise, we have $k=s, l\neq t$ or $k\neq s, l=t.$ In either case, one can show that $\chi_m(x_n)\chi_n(x_m)=\chi_m(x_sx_t)\chi_n(x_kx_l)\neq 1$ which is a contradiction.  Thus, all the possible admissible series discussed previously fall into (3) of the list.

\emph{Case 3.} Assume that there is no $\chi_j \ (j \geq 4)$ corresponding to $x_i$ for any $1\leq i\leq 3,$ but there is some $\chi_k \ (k \ge 4)$ corresponding to $x_1x_2x_3.$

In this situation, if there is some $\chi_m$ corresponding to $x_kx_l$ for certain $1\leq k<l\leq 3,$ then by the preceding discussion we know that $\chi_m(x_kx_l) \neq 1$ implies $\chi_k(x_k)=\chi_l(x_l),$ and that there is no $\chi_n \ (n \ge 4)$ corresponding to $x_sx_t$ with $x_kx_l\neq x_sx_t.$ In addition, there is no other $\chi_r \ (r \ge 4)$ corresponding to $x_1x_2x_3$ by a simple checking of the admissibility. Then the admissible series discussed in this case fall into (4) of our list. As for their admissibility, one only needs to do the easy computation $\chi_m(x_4)\chi_4(x_m)=\chi_m(x_1x_2x_3)\chi_4(x_kx_l)=1.$
\end{proof}

\begin{corollary}
The following list provides an explicit presentation of the quasi-quantum linear spaces over $\Z_2 \times \Z_2 \times \Z_2$ associated to the list of admissible series of quasi-characters in Proposition 5.6:
\begin{itemize}
\item[(1)] $$x_mX_n=\chi_n(x_m)X_nx_m,\ \ 1\leq m\leq 3, \ 1\leq n\leq N;$$
$$X_mX_n=\chi_n(x_m)X_mX_n, \ \ 1\leq m\neq n\leq N;$$
$$X_m^{\overrightarrow{4}}=0, \ \ 1\leq m\leq N.$$
The coproduct is determined by $\bigtriangleup(g)=g\otimes g$ for $g\in G,$ $\bigtriangleup(X_m)=X_m\otimes 1+x_m\otimes X_m$ for $1\leq m\leq 3$ and
$\bigtriangleup(X_n)=X_n\otimes 1+x_l\otimes X_n$ for $4\leq n\leq N,$ here $\chi_n=\chi'_l$ for some $1\leq l\leq 3.$
\item[(2)] $$x_mX_n=\chi_n(x_m)X_nx_m,\ \ 1\leq m\leq 3, \ 1\leq n\leq N;$$
$$X_mX_n=\chi_n(x_m)X_mX_n, \ \ 1\leq m\neq n\leq N;$$
$$X_m^{\overrightarrow{4}}=0, \ 1\leq m\leq 4; \ \ X_n^{\overrightarrow{2}}=0, \ 5\leq n\leq N.$$
The coproduct is determined by $\bigtriangleup(g)=g\otimes g$ for $g\in G,$ $\bigtriangleup(X_m)=X_m\otimes 1+x_m\otimes X_m$ for $1\leq m\leq 3,$
$\bigtriangleup(X_4)=X_4\otimes 1+x_k\otimes X_4$ and $\bigtriangleup(X_n)=X_n\otimes 1+x_ix_j\otimes X_n$ for $5\leq n\leq N.$
\item[(3)] $$x_mX_n=\chi_n(x_m)X_ne_m,\ \ 1\leq m\leq 3, \ 1\leq n\leq N;$$
$$X_mX_n=\chi_n(x_m)X_mX_n, \ \ 1\leq m\neq n\leq N;$$
$$X_m^{\overrightarrow{4}}=0, \ 1\leq m\leq 3; \ \ X_n^{\overrightarrow{2}}=0, \ 4\leq n\leq N.$$
The coproduct is determined by $\bigtriangleup(g)=g\otimes g$ for $g\in G,$ $\bigtriangleup(X_m)=X_m\otimes 1+x_m\otimes X_m$ for $1\leq m\leq 3,$
and $\bigtriangleup(X_n)=X_n\otimes 1+x_ix_j\otimes X_n$ for $4\leq n\leq N.$
\item[(4)] $$x_mX_n=\chi_n(x_m)X_nx_m,\ \ 1\leq m\leq 3, \ 1\leq n\leq N;$$
$$X_mX_n=\chi_n(x_m)X_mX_n, \ \ 1\leq m\neq n\leq N;$$
$$X_m^{\overrightarrow{4}}=0, \ 1\leq m\leq 4; \ \ X_n^{\overrightarrow{2}}=0, \ 5\leq n\leq N.$$
The coproduct is determined by $\bigtriangleup(g)=g\otimes g$ for $g\in G,$ $\bigtriangleup(X_m)=X_m\otimes 1+x_m\otimes X_m$ for $1\leq m\leq 3,$
$\bigtriangleup(X_4)=X_4\otimes 1+x_1x_2x_3\otimes X_4$ and $\bigtriangleup(X_n)=X_n\otimes 1+x_ix_j\otimes X_n$ for $5\leq n\leq N.$
\end{itemize}
\end{corollary}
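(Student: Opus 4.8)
The plan is to derive the corollary by specializing the general classification of Theorem~5.4 to the group $G=\Z_2\times\Z_2\times\Z_2$, feeding in the four families of admissible series produced by Proposition~5.6. By Lemma~5.5 we may take the fixed generators $e_1,e_2,e_3$ of Theorem~5.4 to be $x_1,x_2,x_3$, and by Remark~4.10 together with Proposition~3.3 each admissible series $\{\chi_1,\dots,\chi_N\}$, with $\chi_n$ associated to $\widetilde{\Phi}_{g_n}$, produces the Majid algebra $\mathcal{B}(V)\#\k G$ whose relations and coproduct are exactly those displayed in Theorem~5.4. Thus the task reduces to reading off, for each of the four families, the degrees $g_n$, the commutation scalars, and the nilpotency orders $N_n=|\chi_n(g_n)|$.

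First I would record the degree data. For the three ``base'' characters $\chi_1,\chi_2,\chi_3$ one has $g_m=x_m=e_m$, so the corresponding rows of the matrix $(\alpha_{ij})\in\A(G,\Phi)$ are the standard basis vectors; a character of type $\chi'_l$ has degree $x_l$; $\chi_{ij}$ has degree $x_ix_j$; and $\chi_{123}$ has degree $x_1x_2x_3$. With these degrees the coproduct formulas $\b(X_n)=X_n\otimes 1+g_n\otimes X_n$ in the statement are immediate, and the commutation relations $x_mX_n=\chi_n(x_m)X_nx_m$ and $X_mX_n=\chi_n(x_m)X_nX_m$ are precisely the specializations of the first two families of relations in Theorem~5.4, once one checks via Lemma~5.1 that the scalar appearing there equals $\chi_n(x_m)$.

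The remaining, slightly more delicate point is computing $N_n=|\chi_n(g_n)|$. For $1\le m\le 3$ we have $\chi_m(x_m)=\pm\mathrm{i}$ by (5.6)--(5.8), so $N_m=4$; for $\chi'_l$, relation (5.10) gives $\chi'_l(x_l)=\chi_l(x_l)^{-1}=\mp\mathrm{i}$, again of order $4$. For $\chi_{ij}$, which occurs only in families (2)--(4) under the extra hypothesis $\chi_i(x_i)=\chi_j(x_j)$, I would combine the admissibility relation $\chi_{ij}(x_k)\chi_k(x_ix_j)=1$ with the quasi-character identity (4.4), after verifying from the explicit form of $\Phi$ that $\widetilde{\Phi}_{x_ix_j}(x_i,x_j)=\Phi(x_ix_j,x_i,x_j)=1$; this collapses $\chi_{ij}(x_ix_j)$ to $[\chi_i(x_i)\chi_j(x_j)]^{-1}=(\pm\mathrm{i})^{-2}=-1$, so $N=2$. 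An analogous computation, using $\widetilde{\Phi}_{x_1x_2x_3}(x_1,x_2)=\widetilde{\Phi}_{x_1x_2x_3}(x_1x_2,x_3)=1$, gives $\chi_{123}(x_1x_2x_3)$ of order $4$. Substituting these orders into the last family of relations of Theorem~5.4 yields exactly the nilpotency relations listed in (1)--(4).

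The main obstacle is essentially bookkeeping: keeping straight, in each of the four families, which generator $X_n$ carries which degree $g_n$ and which quasi-character $\chi_n$, so that the correct value of $|\chi_n(g_n)|$ is assigned. The underlying cocycle computations are short, because the fixed $\Phi$ has $a_{st}=0$ and is multiplicative in its first argument, which forces almost all the correction factors $\widetilde{\Phi}_g(\cdot,\cdot)$ evaluated on products of distinct generators to be trivial.
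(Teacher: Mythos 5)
Your proposal is correct and follows exactly the route the paper intends (the paper states this corollary without proof, as a direct specialization of Theorem~5.4 via Lemma~5.1, Lemma~5.5 and Proposition~5.6). Your computations of the nilpotency orders $|\chi_n(g_n)|$ --- in particular that $\chi_{ij}(x_ix_j)=[\chi_i(x_i)\chi_j(x_j)]^{-1}=-1$ and that $\chi_{123}(x_1x_2x_3)$ has order $4$, using the vanishing of the floor terms in $\Phi$ on products of distinct generators --- check out and supply precisely the bookkeeping the paper leaves implicit.
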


\begin{remark}
 Our method of computation may be applied to give a complete classification of finite quasi-quantum linear spaces, up to twist equivalence, over $\Z_2 \times \Z_2 \times \Z_2$ with nontrivial associators. As for a coboundary associator, the classification is reduced by gauge transformation to that of finite quantum linear spaces over $\Z_2 \times \Z_2 \times \Z_2$ which was contained in \cite{as1}. It is worth to note that, via a gauge transform of those quantum linear spaces over $\Z_2 \times \Z_2 \times \Z_2$ by the Albuquerque-Majid cochain related to the octonions, one naturally obtains all the finite octonionic quasi-quantum linear spaces!
\end{remark}

\vskip 5pt

\noindent {\bf Acknowledgement:} \quad
The authors are very grateful to the anonymous referee for the valuable comments and suggestions which highly improved the exposition.

\end{document}